\newcommand{\eg}{e.g.}
\newcommand{\ie}{i.e.}
\newcommand{\Trop}{\mathbb{T}}
\newcommand{\tplus}{\oplus}
\newcommand{\tsum}{\bigoplus}
\newcommand{\tdot}{\odot}
\newcommand{\R}{\mathbb{R}}
\newcommand{\transpose}[1]{{#1}^\top}
\newcommand{\Pcal}{\mathcal{P}}
\newcommand{\Pcalx}{\Pcal'} 
\newcommand{\Qcal}{\mathcal{Q}}
\newcommand{\Rcal}{\mathcal{R}}
\newcommand{\Ccal}{\mathcal{C}}
\newcommand{\Fcal}{\mathcal{F}}
\newcommand{\Ncal}{\mathcal{N}}
\newcommand{\Bcal}{\mathcal{B}}
\newcommand{\Scal}{\mathcal{S}}
\newcommand{\Hcal}{\mathcal{H}}
\newcommand{\Tcal}{\mathcal{T}}
\newcommand{\Ctrop}{\Ccal^{\mathrm{trop}}}
\newcommand{\K}{\mathbb{K}}
\newcommand{\funk}{\delta_{\rm F}}
\newcommand{\hilbert}{d_{\rm H}}
\newcommand{\tsegm}{\mathsf{tsegm}}
\newcommand{\trop}{\mathsf{trop}}
\newcommand{\final}{\underline{\lambda}}
\newcommand{\initial}{\overline{\lambda}}
\newcommand{\mgap}{\bar\mu}
\newcommand{\Z}{\mathbb{Z}}
\newcommand{\LP}{\mathbf{LP}}
\DeclareMathOperator{\val}{\mathrm{val}}
\DeclareMathOperator{\gap}{\mathsf{gap}}
\DeclareMathOperator{\tgap}{\mathsf{tgap}}
\DeclarePairedDelimiter{\abs}{\lvert}{\rvert}
\DeclarePairedDelimiter{\norm}{\lVert}{\rVert}
\newcommand{\arxiv}[1]{\href{http://arxiv.org/abs/#1}{arXiv:#1}}
\newcommand\polymake{\texttt{polymake}\xspace}
\newtheorem{theorem}{Theorem}
\newtheorem{proposition}[theorem]{Proposition}
\newtheorem{corollary}[theorem]{Corollary}
\newtheorem{lemma}[theorem]{Lemma}
\newtheorem{assumption}[theorem]{Assumption}
\newtheorem{introTheorem}{Theorem}
\theoremstyle{remark}
\newtheorem{remark}[theorem]{Remark}
\newcommand{\troppath}{\mathcal{C}^{\text{\rm trop}}}
\newcommand{\cpath}{\mathcal{C}}
\newcommand{\scalar}[2]{\langle#1,#2\rangle}
\newcommand{\tscalar}[2]{\langle #1, #2 \rangle_\Trop}
\DeclareMathOperator*{\argmax}{\arg\max}
\newcommand\Sym[1]{{\operatorname{Sym}}_{#1}}
\newcommand\puiseuxCEX[1]{\textbf{\textsc{LW}}_{#1}}
\newcommand\puiseuxCEXslack[1]{\textbf{\textsc{LW}}_{#1}^{=}}
\newcommand\dualPuiseuxCEXslack[1]{\textbf{\textsc{DualLW}}_{#1}^{=}}
\newcommand\perturbedPuiseuxCEX[2]{\textbf{\textsc{LW}}_{#1}^{#2}}
\newcommand\realCEX[2]{\puiseuxCEX{#1}(#2)}
\newcommand\realCEXslack[2]{\puiseuxCEXslack{#1}(#2)}
\newcommand\dualRealCEXslack[2]{\dualPuiseuxCEXslack{#1}(#2)}
\newcounter{tikzbrace}
\newcommand{\tikzmark}[1]{\tikz[overlay,remember picture] {\node (brace-\thetikzbrace) {};}\stepcounter{tikzbrace}}
\newcommand{\insertbigbrace}[1]{%
\begin{tikzpicture}[remember picture, overlay]
\draw[thick] 
	let \n1 = {\thetikzbrace - 2},
		\n2 = {\thetikzbrace - 1},   
		\p1 = (brace-\n1),
		\p2 = (brace-\n2),
		\n3 = {max(\x1,\x2)},
		\p3 = ($(\n3,\y1) + (0.2,0.4)$),
		\p4 = ($(\n3,\y2) + (0.2,-0.2)$),
		\p5 = (-0.1,0) in 
	(\p3) ++ (\p5) -- (\p3) -- node[right=1ex]{#1} (\p4) -- ++ (\p5);
\end{tikzpicture}}
\newlength{\mytemplen}
\title{Log-barrier interior point methods\\ are not strongly polynomial}
\author{Xavier Allamigeon
\and Pascal Benchimol
\and St\'ephane Gaubert
\and Michael Joswig} 
\address[Xavier Allamigeon, St{\'e}phane Gaubert]{INRIA and CMAP, \'Ecole Polytechnique, CNRS, Université Paris--Saclay, 91128 Palaiseau Cedex, France
\texttt{firstname.lastname@inria.fr}}
\address[Pascal Benchimol]{EDF Lab Paris--Saclay,
7 bd Gaspard Monge,
91120 Palaiseau, France \newline 
\texttt{pascal.benchimol@polytechnique.edu}}
\address[Michael Joswig]{
Institut f{\"u}r Mathematik,
 TU Berlin, 
 Str.\ des 17.\ Juni 136, 10623 Berlin, Germany
 \texttt{joswig@math.tu-berlin.de}
}
\thanks{The first and third authors are partially supported by the PGMO program of EDF and Fondation Math\'ematique Jacques Hadamard.}
\thanks{During this work, P.~Benchimol was affiliated with INRIA Saclay \^Ile-de-France and CMAP, \'Ecole Polytechnique,  CNRS UMR 7641. He was supported a PhD fellowship of DGA and \'Ecole Polytechnique.}
\thanks{M.~Joswig is partially supported by Einstein Foundation Berlin, DFG within the Priority Program 1489 and by a CNRS INSMI visiting professorship at CMAP, \'Ecole Polytechnique, UMR 7641 and IMJ, Universit\'e Pierre et Marie Curie, UMR 7586.}
\subjclass[2010]{90C51, 14T05}
\date{\today}
\begin{document}

\begin{abstract}
We prove that primal-dual log-barrier interior point methods 
are not strongly polynomial, by constructing a family of linear programs with
 $3r+1$ inequalities in dimension $2r$ for which the number of iterations performed 
is in $\Omega(2^r)$.
The total curvature
of the central path of these linear programs
is also exponential in $r$,
disproving a continuous analogue of the Hirsch conjecture proposed by Deza, Terlaky and Zinchenko.  
Our method is to tropicalize the central
  path in linear programming. The tropical central path is the piecewise-linear limit of the central paths of parameterized families of classical
  linear programs viewed through logarithmic glasses. This allows us
to provide combinatorial lower bounds for the number of iterations and the total curvature, in a general setting.

\end{abstract}

\maketitle

\section{Introduction}\label{sec:introduction}

\noindent
An open question in computational optimization, known as Smale's 9th problem~\cite{smale}, asks whether linear programming can be solved with a strongly polynomial
algorithm. This requires the 
algorithm to be polynomial time in the bit model
(meaning that the 
execution time is polynomial in the number of bits of the input)
\emph{and} the number of arithmetic operations to be bounded by a polynomial
in the number of numerical entries of the input, uniformly in their bit length.

It is instructive to consider interior point methods in view of this question. Since Karmarkar's seminal work~\cite{karmarkar1984new}, the latter
have become indispensable in mathematical optimization.
Path-following interior point methods are driven to an optimal solution along a trajectory called the \emph{central path}.
The best known upper bound on the number of iterations performed by path-following interior point methods for linear programming is $O(\sqrt n L)$, where $n$ is the number of variables and $L$ is the total bit size of all coefficients. Hence, they are polynomial in the bit model (every iteration can be done in strongly polynomial time). It is tempting to ask whether a suitable interior point method could lead to a strongly polynomial algorithm in linear programming. 
In other words, this raises the question of bounding the number of iterations by a polynomial depending only on the number of variables and constraints.

Early on, Bayer and Lagarias recognized that the central path is ``a fundamental mathematical object underlying Karmarkar's algorithm and that the good convergence properties of Karmarkar's algorithm arise from good geometric properties of the set of trajectories''~\cite[p.~500]{BayerLagarias89a}. 
Such considerations led Dedieu and Shub to consider the total curvature as an informal complexity measure of the central path.
Intuitively, a central path with a small total curvature should be easier to approximate by linear segments.
They
conjectured that the total curvature of the central path is linearly bounded in the dimension of the ambient space~\cite{dedieu2005newton}.
Subsequently, Dedieu, Malajovich and Shub showed that this property is valid in an average sense~\cite{dedieu2005curvature}. 
However, Deza, Terlaky and Zinchenko provided a counterexample by constructing a redundant Klee-Minty cube~\cite{DTZ08}.
This led them to propose a revised conjecture, the ``continuous analogue of the Hirsch conjecture'', which says that the total curvature of the central path is linearly bounded in the number of constraints. 

In this paper, we disprove the conjecture of Deza, Terlaky and Zinchenko,
by constructing a family of linear programs for which the total curvature is exponential
in the number of constraints. Moreover, we show that for the same family of linear
programs, 
a significant class of polynomial time interior point methods, namely the primal-dual path-following methods with respect to a log-barrier function, are not strongly polynomial.

More precisely, given $r\geq 1$, we consider the linear program
\begin{equation}\label{eq:counterexample}
\begin{array}{r@{\quad}l}
\text{minimize} & x_1 \\[\jot]
\text{subject to} & 
x_1 \leq t^2 \\[\jot]
& x_2 \leq t \\[\jot]
& x_{2j+1} \leq t \, x_{2j-1} \, , \; x_{2j+1} \leq t \, x_{2j} \tikzmark{} \\[\jot]
& x_{2j+2} \leq t^{1-1/2^j} (x_{2j-1} + x_{2j}) \tikzmark{} \\[\jot]
& x_{2r-1} \geq 0 \, , \; x_{2r} \geq 0
\end{array}\tag*{$\realCEX{r}{t}$}
\insertbigbrace{$1 \leq j < r$}
\end{equation}
which depends on a parameter $t > 0$.
The linear program~$\realCEX{r}{t}$ has $2r$ variables and $3r+1$ constraints.
The notation $\realCEX{r}{t}$ for these linear programs refers to the ``long and winding'' nature of their central paths.
More precisely, our first main result is the following.

\begin{introTheorem}[see Theorem~\ref{th:curvature}] \label{thm:curvature:intro}
  The total curvature of the central path of the linear programs $\realCEX{r}{t}$ is exponential in $r$, provided that $t>0$ is sufficiently large.
\end{introTheorem}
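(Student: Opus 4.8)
The plan is to pass to the tropical limit. Regarding $t$ as a formal parameter, I read $\realCEX{r}{t}$ as a single linear program $\puiseuxCEX{r}$ over the ordered field of real Puiseux series $\puiseuxseries$: its coefficients are the monomials $t^{2}$, $t$, $t^{1-2^{-j}}$ and $1$, whose valuations are the exponents themselves. The central path of $\puiseuxCEX{r}$ is then a curve with entries in $\puiseuxseries$, and applying the valuation coordinatewise --- equivalently, taking $\log_t$ of the classical central path of $\realCEX{r}{t}$ and letting $t\to\infty$ --- produces a piecewise-linear curve $\troppath$ in $\R^{2r}$, the \emph{tropical central path}. I would then invoke the general principle underlying the paper: the total curvature of the classical central path of $\realCEX{r}{t}$ is, for $t$ large enough, bounded below by a positive constant times the number of linear segments of $\troppath$. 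This reduces the theorem to the purely combinatorial statement that $\troppath$ has exponentially many linear segments in $r$.

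To establish that, I would describe $\troppath$ through the tropicalized feasibility conditions $y_{2j+1}\le 1+y_{2j-1}$, $y_{2j+1}\le 1+y_{2j}$ and $y_{2j+2}\le (1-2^{-j})+\max(y_{2j-1},y_{2j})$, where $y_i=\val x_i$, together with $y_1\le 2$, $y_2\le 1$ and the trivial conditions coming from $x_{2r-1},x_{2r}\ge 0$. The crucial feature is the self-similar, ``doubling'' structure: the pair $(y_{2j+1},y_{2j+2})$ is driven by the tropical minimum and the tropical sum of the previous pair $(y_{2j-1},y_{2j})$, and the exponents $1-2^{-j}$ are tuned so that, as the central-path parameter sweeps its range, the min-type constraint on the odd coordinate and the sum-type constraint on the even coordinate alternate in which one is binding. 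I would then set up an induction on $r$ showing that along $\troppath$ the block of coordinates indexed by $(2j-1,2j)$ oscillates between two extreme regimes, and that each oscillation at level $j$ forces two oscillations at level $j+1$; iterating through the $r$ levels produces at least $c\,2^{r}$ breakpoints of $\troppath$, at each of which the direction of the curve changes by an angle bounded below by an absolute constant $\theta_0>0$.

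Finally I would assemble the two pieces. Since the classical central path of $\realCEX{r}{t}$, after the logarithmic change of scale, converges to $\troppath$ as $t\to\infty$, the direction in which the classical path moves is, along the image of each linear segment of $\troppath$, close to a fixed direction, and these fixed directions differ by an angle at least comparable to $\theta_0$ across each of the $\Omega(2^{r})$ breakpoints found above. As total curvature is the total variation of the unit tangent and is additive over sub-arcs, for every $\varepsilon>0$ there is a threshold $t_r(\varepsilon)$ such that for $t\ge t_r(\varepsilon)$ the classical path contains, near each such breakpoint, a short arc across which its unit tangent rotates by at least $\theta_0-\varepsilon$; summing these contributions gives total curvature at least $(\theta_0-\varepsilon)\,c\,2^{r}$. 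Choosing $\varepsilon=\theta_0/2$ and $t\ge t_r(\theta_0/2)$ yields the claimed exponential lower bound.

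The main obstacle is the combinatorial analysis of $\troppath$. It is comparatively easy to see that the tropical feasible region has exponentially many vertices; the real work is to show that the tropical \emph{central path} --- a single canonical curve, not just some path in the polyhedron --- is forced to traverse an exponential zigzag rather than take a shortcut, and that its corners are non-degenerate, so that they genuinely contribute curvature. This is exactly where the precise values $1-2^{-j}$ of the exponents are used, and the inductive step converting one oscillation at level $j$ into two oscillations at level $j+1$ is the technical heart of the argument; a secondary, more routine difficulty is making the convergence in the last step quantitative enough to produce the threshold $t_r$.
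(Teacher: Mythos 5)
There is a genuine gap in the step that transfers the combinatorics of the tropical path back to the curvature of the classical path. The ``general principle'' you invoke --- that the total curvature of the classical central path is bounded below by a constant times the number of linear segments of $\troppath$ --- is not what the paper proves, and it is not true as stated. The paper proves a statement of that form only for the \emph{iteration count} (Theorem~\ref{th:tropical_lower_bound}); for curvature the mechanism is different. The difficulty is that the Euclidean angle of the classical path is computed in the original coordinates, where each coordinate has size roughly $t^{y_i}$, so as $t\to\infty$ the chord directions are dominated entirely by the coordinates of largest valuation. A breakpoint of $\troppath$ that occurs only in non-dominant coordinates is invisible: the corresponding classical angle tends to $0$, not to a constant $\theta_0>0$. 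For the same reason, your assertion that the classical path's unit tangent stays close to a fixed direction along the image of each tropical segment is false in the Euclidean metric; it holds only after the $\log_t$ rescaling, which does not preserve angles. The paper's actual criterion (Lemmas~\ref{lemma:angle} and~\ref{lemma:angle2}, Proposition~\ref{prop:curvature}) is that the angle $\angle\,\Ccal_t(t^{\lambda_{k-1}})\,\Ccal_t(t^{\lambda_k})\,\Ccal_t(t^{\lambda_{k+1}})$ tends to $\pi/2$ precisely when the maxima of the coordinates of $\troppath(\lambda_{k-1})$, $\troppath(\lambda_k)$, $\troppath(\lambda_{k+1})$ are strictly increasing \emph{and} the sets of coordinates attaining the maximum at $\troppath(\lambda_k)$ and at $\troppath(\lambda_{k+1})$ are disjoint. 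Verifying this for $\puiseuxCEX{r}$ is a concrete computation (Table~\ref{tab:table}): along the subdivision $\lambda_k = 4k/2^{r-1}$, $k=0,\dots,2^{r-2}$, the unique maximal coordinate alternates between the slack variables $w_{3(r-1)}$ and $w_{3(r-1)+1}$, yielding $2^{r-2}-1$ limiting right angles. Your proposal never isolates which coordinate dominates (you work only with the $x$-variables and with generic ``non-degenerate corners''), so the key non-degeneracy you would need is exactly the part left unproved.

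The rest of your outline is consistent with the paper. The tropicalization setup, the recursive description of $\troppath$ via the tropicalized constraints, and the doubling structure forcing $\Omega(2^r)$ breakpoints are all correct in spirit; the paper obtains them by an explicit barycenter formula (Proposition~\ref{prop:counterexample_tropical_central_path} and Proposition~\ref{prop:central_path_slack}) rather than an induction on oscillations, but that difference is cosmetic. To repair your argument you would need to replace the unconditional ``each breakpoint contributes $\theta_0$'' claim by the disjoint-argmax criterion above and check it coordinate by coordinate, including the slacks.
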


Our second main result provides an exponential lower bound for the number of iterations of a large class of path-following interior point methods. We only require these methods to stay in the so-called ``wide'' neighborhood of the central path; see Section~\ref{sec:classical_central_path} for the definition. Remarkable examples of such methods include short or long-step methods, like the ones of Kojima, Mizuno and Yoshise~\cite{kojima_short_step,kojima_long_step} and Monteiro and Adler~\cite{AdlerMonteiro}, as well as predictor-corrector methods like the ones of Mizuno, Todd and Ye~\cite{MizunoToddYe} and Vavasis and Ye~\cite{VavasisYe}.

\begin{introTheorem}[see Corollary~\ref{cor:exp_lower_bound}]\label{thm:complexity:intro}
The number of iterations of any primal-dual path-following interior point algorithm with a log-barrier function which iterates in the wide neighborhood of the central path is exponential in $r$ on the linear programs $\realCEX{r}{t}$, provided that $t>0$ is sufficiently large.
\end{introTheorem}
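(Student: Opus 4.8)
The plan is to reduce Theorem~\ref{thm:complexity:intro} to a statement about the \emph{tropical} central path of the family $\realCEX{r}{t}$, exploiting the fact that the classical central path, viewed through logarithmic glasses (i.e.\ applying $\val$ or $-\log_t|\cdot|$ coordinatewise), converges to a piecewise-linear trajectory. First I would recall from the main body of the paper the description of this tropical central path and establish that it is ``long and winding'': it passes through $\Omega(2^r)$ distinct tropical vertices of the tropicalized feasible set, or equivalently that its image in logarithmic coordinates is a piecewise-linear curve with $\Omega(2^r)$ breakpoints whose slopes change by a bounded amount. The recursive structure of the constraints in $\realCEX{r}{t}$ — with the exponents $1-1/2^j$ interleaving the two ``branches'' $x_{2j+1}$ and $x_{2j+2}$ — is precisely what forces the tropical central path to oscillate between the two halves of the cube-like polytope, doubling the number of segments at each level $j$.

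The next step is a transfer principle: any primal-dual path-following method that stays in the wide neighborhood of the central path must, at each iteration, have its current iterate $(x,s)$ satisfy $x_i s_i \in [\gamma\mu, \mu/\gamma]$ for the current barrier parameter $\mu$ and a fixed $\gamma \in (0,1)$. Taking valuations, this means the tropical iterate lies within a bounded additive neighborhood of the tropical central path. I would then argue that consecutive iterates cannot make large progress in $-\log_t \mu$: because the tropical central path has $\Omega(2^r)$ segments and the metric structure (in the relevant Funk/Hilbert-type metric $\funk$ introduced in the paper, or simply in terms of the ordering of $\log_t$-coordinates) separates these segments, an iterate in the wide neighborhood near the end of segment $k$ cannot also be near segment $k+2$. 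Hence the number of iterations is bounded below by a constant times the number of tropical segments, which is $\Omega(2^r)$. The condition ``$t>0$ sufficiently large'' enters exactly here: it guarantees that the finitely many convergence estimates (the $o(1)$ error terms relating the classical central path to its tropical limit, uniformly along the path) are small enough that the separation between tropical segments is not washed out.

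The main obstacle I anticipate is making the transfer principle quantitative and uniform in $r$ simultaneously: the tropicalization limit $t\to\infty$ is taken for \emph{fixed} $r$, but the conclusion must hold with a \emph{single} threshold-style statement ``for $t$ large enough (depending on $r$)'' while the lower bound $\Omega(2^r)$ is uniform. Concretely, one must control the constants in the neighborhood-containment estimate so that the number of classical iterations needed to traverse one tropical segment is bounded below by an absolute constant, independent of $r$ — otherwise the exponential count could be diluted. I expect this to be handled by a compactness/continuity argument showing that the wide neighborhood of the classical central path converges (in the Hausdorff sense, in logarithmic coordinates) to a bounded neighborhood of the tropical central path, together with the observation that a step of a path-following method decreases $\mu$ by at most a constant factor per iteration, so progress along the $\log_t\mu$ axis is at most linear while the path to be traversed has exponential ``length'' in the segment-counting sense. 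Once Theorem~\ref{thm:curvature:intro} and the combinatorial analysis of the tropical central path of $\realCEX{r}{t}$ are in hand, Theorem~\ref{thm:complexity:intro} — i.e.\ Corollary~\ref{cor:exp_lower_bound} — follows by assembling these pieces.
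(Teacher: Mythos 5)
Your overall strategy coincides with the paper's: describe the tropical central path of $\realCEX{r}{t}$ explicitly, show it consists of $2^{r-1}$ tropical segments with alternating directions in the $(x_{2r-1},x_{2r})$-plane, prove that the logarithmic image of the wide neighborhood collapses onto the tropical central path (this is Theorem~\ref{th:uniform_convergence_central_path}), and conclude that the iteration count is at least the number of tropical segments. You also correctly anticipate the uniformity issue in $r$ versus $t$, which the paper resolves with the quantitative estimate of Theorem~\ref{th:polyhedron_metric_estimate} for monomial constraint matrices, yielding the doubly exponential threshold~\eqref{eq:threshold_t}.

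However, there is a genuine gap at the central counting step. Your inference ``an iterate near segment $k$ cannot also be near segment $k+2$, hence the number of iterations is at least a constant times the number of segments'' only constrains where the individual iterates sit; it does not rule out an algorithm taking very few, very long steps, say jumping from a point with $\mgap\approx t^2$ directly to one with $\mgap\approx 1$ in a single Newton step whose endpoints both lie in $\Ncal^{-\infty}_\theta$. What closes this loophole in the paper is a three-part argument about what happens \emph{between} consecutive iterates: (i) Lemma~\ref{lemma:newton} shows the entire Euclidean segment $[z^k,z^{k+1}]$, not just its endpoints, lies in the wide neighborhood; (ii) Lemma~\ref{lemma:uniform_convergence_segment} shows that $\log_t[z^k,z^{k+1}]$ is within $\log_t 2$ of the tropical segment $\tsegm(\log_t z^k,\log_t z^{k+1})$; and (iii) Proposition~\ref{prop:piecewise_approximation} shows that a union of $p$ tropical segments confined to a tight tubular neighborhood of $\Ctrop$ must have $p\geq\gamma$, because the complement of a suitably chosen tropical halfspace through each breakpoint is \emph{tropically convex}, forcing some segment endpoint into a small ball around every breakpoint. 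It is this restricted combinatorial shape of tropical segments, pitted against the alternating turns of the tropical central path, that produces the lower bound; your appeal to ``the metric structure separates the segments'' does not supply it. Your fallback argument — that $\mu$ decreases by at most a constant factor per iteration — is both unavailable in the paper's generality (predictor steps target $\mu=0$) and insufficient: the relevant range of $\log_t\mu$ is the fixed interval $[0,2]$ independent of $r$, with the $2^{r-1}$ corners spaced only $2^{-(r-2)}$ apart in $\lambda$, so bounding the per-iteration progress in $\log_t\mu$ by a constant yields only a constant lower bound.
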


The proofs of these theorems rely on tropical geometry.
The latter can be seen as the (algebraic) geometry on the tropical (max-plus) semifield $(\Trop, \tplus, \tdot)$ where the set $\Trop = \R \cup \{ - \infty \}$ is endowed with the operations $a \tplus b = \max(a,b)$ and $a \tdot b = a + b$.
A tropical variety can be obtained as the limit at infinity of a sequence of classical algebraic varieties depending on one real parameter $t$ and drawn on logarithmic paper, with $t$ as the logarithmic base. 
This process is known as Maslov's~\emph{dequantization}~\cite{litvinov2007maslov}, or Viro's method~\cite{viro2001dequantization}.
It can be traced back to the work of Bergman~\cite{bergman1971logarithmic}.
In a way, dequantization yields a piecewise linear image of classical algebraic geometry. 
Tropical geometry has a strong combinatorial flavor, and yet it retains a lot of information about the classical objects~\cite{itenberg2009tropical,TropicalBook}.
 
The tropical semifield can also be thought of as the image of a non-Archimedean field under its valuation map. This is the approach we adopt here, by considering
 $\realCEX{r}{t}$ as a linear program over a real closed non-Archimedean field of Puiseux series in the parameter $t$. Then, the tropical central path is defined
as the image by the valuation of the central path over this field.
We first give an explicit geometric characterization of the tropical central path, as a tropical analogue of the barycenter of a sublevel set of the feasible set induced by the duality gap; see Section~\ref{subsec:geometric_characterization}. Interestingly, it turns out that the tropical central path does not depend on the external representation of the feasible set. This is in stark contrast with the classical case; see~\cite{DTZ08} for an example. We study the convergence properties of the classical central path to the tropical one in Section~\ref{subsec:uniform}.

We then show that, when $t$ is specialized to a suitably large real value,
the total curvature of the central path of the linear program $\realCEX{r}{t}$ is bounded below by a combinatorial curvature (the tropical total curvature)
depending only on the image of the central path by the nonarchimedean valuation (Section~\ref{sec:curvature}).
The linear programs $\realCEX{r}{t}$ have an inductive construction, leading to a tropical central path with a self-similar pattern, resulting
in an exponential
number of sharp turns as $r$ tends to infinity. In this way,
we obtain the exponential 
bound for the total curvature (Theorem~\ref{thm:curvature:intro}). 

A further refinement of the tropical analysis shows that the number of iterations performed by interior point methods is bounded from below by the number of tropical segments constituting the tropical central path; see Section~\ref{sec-cb}. For the family of linear programs $\realCEX{r}{t}$,
we show that the number of such segments
is necessarily exponential, leading to the exponential
lower bound for the number of iterations of interior
point methods 
(Theorem~\ref{thm:complexity:intro}). We provide an explicit lower bound for the value of the parameter $t$. It is doubly exponential in $r$, implying that
 the bitlength of $t$ is exponential in $r$, which is consistent with the polynomial time character of interior point methods in the bit model.

\subsection*{Related Work.}
The redundant Klee-Minty cube of~\cite{DTZ08} and the ``snake'' in~\cite{deza2008polytopes} are instances which show that the total curvature of the central path can be in $\Omega(m)$ for a polytope described by $m$ inequalities.  
Gilbert, Gonzaga and Karas~\cite{CharlesGilbert2004} also
exhibited ill-behaved central paths.  They showed that the central path can have a ``zig-zag'' shape with infinitely
many turns, on a problem defined in $\R^2$ by non-linear but convex functions.  

The central path has been studied by Dedieu, Malajovich and Shub~\cite{dedieu2005curvature} via the multihomogeneous B\'ezout
Theorem and by De Loera, Sturmfels and Vinzant~\cite{de2010central} using matroid theory.  These two papers provide an
upper bound of $O(n)$ on the total curvature averaged over all regions of an arrangement of hyperplanes in
dimension~$n$.  

In terms of iteration-complexity of
interior point methods, several worst-case results have been
proposed~\cite{anstreicher1991performance,kaliski1991convergence,ji1994complexity,powell1993number,todd1996lower,Bertsimas1997}.
In particular, Stoer and Zhao~\cite{zhao1993estimating} showed the iteration-complexity of a certain class of
path-following methods is governed by an integral along the central path. This quantity, called \emph{Sonnevend's
  curvature}, was introduced in~\cite{sonnevend1991complexity}. The tight relationship between the total Sonnevend
curvature and the iteration complexity of interior points methods have been extended to semidefinite and symmetric cone
programs~\cite{kakihara2013information}. 
Note that Sonnevend's curvature is different from the \emph{geometric} curvature we study in this paper. To the best of
our knowledge, there is no explicit relation between the geometric curvature and the iteration-complexity of
interior point methods. We circumvent this difficulty here by showing
directly that the geometric curvature {\em and} the number of iterations
are exponential for the family $\realCEX{r}{t}$.

The present work relies on the considerations of amoebas (images by the valuation) of algebraic and semi-algebraic sets, see~\cite{kapranov,DevelinYu07,richter2005first,alessandrini2013} for background.
Another ingredient is a construction of Bezem, Nieuwenhuis and
Rodr{\'{\i}}guez-Carbonell~\cite{BezemNieuwenhuisRodriguez08}. Their goal was to show that an algorithm of Butkovi\v{c}
and Zimmermann~\cite{Butkovic2006} has exponential running time.
We arrive at our family of linear programs by lifting a variant of this construction to Puiseux series.

Finally, let us point out that the first main result of this article,
the exponential bound for the total curvature of the central path,
initially appeared in our preprint~\cite{longandwinding}.
We next discuss the main differences with the present paper.
In the original preprint, we exploited different tools: to characterize the tropical central path, we used methods from model theory, employing o-minimal structures and Hardy fields, in the spirit of Alessandrini~\cite{alessandrini2013}. In the present revision,
we provide a more elementary proof, avoiding the use of model theory. The original model theory approach, however, keeps the advantage of greater generality.
We expect that this will allow to extend some of the present results to other kinds of barrier functions.
In addition, we have added an explicit lower bound for the number of iterations, our second main result here.
We thank the colleagues who commented on~\cite{longandwinding}, in particular A.~Deza, T.~Terlaky and Y.~Zinchenko.

\section{The Primal-Dual Central Path and Its Neighborhood}
\label{sec:classical_central_path}
\noindent
In this section, we recall the definition of the central path and introduce the notions related to path-following interior point methods which we will use in the rest of the paper. 

In what follows, we consider a linear program of the form
\begin{equation}\label{LP:primal}
  \text{minimize} \quad \scalar{c}{x} \quad \text{subject to} \quad A x + w = b \, , \;  (x, w) \in \R_+^{n+m} \tag*{$\text{LP}(A,b,c)$}
\end{equation}
in which the slack variables $w$ are explicit. Here and below,
$A$ is a $m\times n$ matrix, $b\in \R^m$, $c\in \R^n$,
we denote by $\scalar{\cdot}{\cdot}$ the standard scalar product, and by $\R_+$ the set of non-negative reals. The dual linear program takes a form which is very similar:
\begin{equation}\label{LP:dual}
  \text{maximize} \quad \scalar{b}{y} \quad \text{subject to} \quad s - \transpose{A} y = c \, , \;  (s, y) \in \R_+^{n+m} \tag*{$\text{DualLP}(A,b,c)$} \enspace,
\end{equation}
where $\cdot^\top$ denotes the transposition.
For the sake of brevity, we set $N \coloneqq n + m$ to represent the total number of variables in both linear programs.
Let $\Fcal^\circ$ be the set of strictly feasible primal-dual elements, \ie,
\[
\Fcal^\circ \coloneqq \bigl\{ z = (x, w, s, y) > 0 \colon A x + w = b \, , \; s -\transpose{A} y  = c
\bigr\}	\, .
\]
which we assume to be nonempty.
In this situation, for any given $\mu>0$, the system of equations and inequalities
\begin{equation} \label{eq:classical_central_path}
\begin{aligned}
  A x + w & = b \\
s-\transpose{A} y & =c \\
\begin{pmatrix}
x s \\
w y    
\end{pmatrix}
& = \mu e \\
x, w, y, s & > 0
\end{aligned} 
\end{equation}
is known to have a unique solution $(x^\mu, w^\mu, s^\mu, y^\mu) \in \R^{2N}$; here $e$ stands for the all-$1$-vector in $\R^N$; further, $x s$ and $w y$ denote the Hadamard products of $x$ by $s$ and $w$ by $y$, respectively. The \emph{central path} of the dual pair~\ref{LP:primal} and~\ref{LP:dual} of linear programs is defined as the function which maps $\mu > 0$ to the point $(x^\mu, w^\mu, s^\mu, y^\mu)$. The latter shall be referred to as the \emph{point of the central path with parameter} $\mu$.
The equality constraints in \eqref{eq:classical_central_path} define a real algebraic curve, the \emph{central curve} of the dual pair of linear programs, which has been studied in \cite{BayerLagarias89a} and \cite{de2010central}.
The central curve is the Zariski closure of the central path.

The \emph{primal} and \emph{dual central paths} are defined as the projections of the central path onto the $(x, w)$-
and $(s,y)$-coordinates, respectively. Equivalently, given $\mu > 0$, the points $(x^\mu, w^\mu)$ and $(s^\mu, y^\mu)$
on the primal and dual central paths can be defined as the unique optimal solutions of the following pair of
\emph{logarithmic barrier} problems:
\[
\begin{array}{r@{\quad}l}
\text{minimize} & \scalar{c}{x} - \mu \Bigl(\sum_{j = 1}^n \log(x_j) + \sum_{i = 1}^m \log(w_i)\Bigr) \\[\jot]
\text{subject to} & A x + w = b \, , \, x > 0 \, , \, w > 0 \, ,
\end{array}
\]
and:
\[
\begin{array}{r@{\quad}l}
\text{maximize} & \mu \Bigl(\sum_{j = 1}^n \log(s_j) + \sum_{i = 1}^m \log(y_i)\Bigr) - \scalar{b}{y}\\[\jot]
\text{subject to} & s - \transpose{A} y = c \, , \, s > 0 \, , \, y > 0 \, .
\end{array}
\]
The uniqueness of the optimal solutions follows from the fact that the objective functions are strictly convex and concave, respectively.
The equivalence to~\eqref{eq:classical_central_path} results from the optimality conditions of the logarithmic barrier problems.
The main property of the central path is that the sequences $(x^\mu, w^\mu)$ and $(s^\mu, y^\mu)$ converge to optimal solutions $(x^*, w^*)$ and $(s^*, y^*)$ of the linear programs~\ref{LP:primal} and~\ref{LP:dual}, when $\mu$ tends to $0$.

The \emph{duality measure} $\bar{\mu}(z)$ of an arbitrary point $z = (x, w, s, y)\in\R_+^{2N}$ is defined by
\begin{equation}\label{eq:duality_measure}
  \bar{\mu}(z) \coloneqq \frac{1}{N}\bigl(\scalar{x}{s} + \scalar{w}{y}\bigr) \, .
\end{equation}
With this notation, observe that the point $z$ belongs to the central path if and only if we have
\begin{equation}\label{eq:centralpath}
  \begin{pmatrix}
    x s \\
    w y	
  \end{pmatrix}
  = \bar{\mu}(z) e \, .
\end{equation}
In other words, the difference $\begin{psmallmatrix}
x s \\
w y	
\end{psmallmatrix} - \bar{\mu}(z) e$ 
indicates
how far the point $z = (x,w,s,y)$ is
from
the central path.
This leads to introducing the neighborhood
\begin{equation}\label{eq:neighborhood}
\Ncal_\theta \coloneqq \Bigl\{ z \in {\Fcal}^\circ\colon
\Bigl\| 
\begin{pmatrix}
x s \\
w y 
\end{pmatrix} - 
\bar{\mu}(z) e \Bigr\| \leq \theta \bar{\mu}(z) \Bigr\} \, ,
\end{equation}
of the central path by bounding some norm of the deviation in terms of a precision parameter $0 < \theta < 1$.
Clearly, this neighborhood depends on the choice of the norm $\| \cdot \|$.
In the context of interior point methods common choices include the $\ell_2$- or $\ell_\infty$-norms.
However, here we focus on the \emph{wide neighborhood}
\begin{equation}\label{eq:wide_neighborhood}
  \Ncal^{-\infty}_\theta \coloneqq \Bigl\{ z \in {\Fcal}^\circ \colon 
  \begin{pmatrix}
    x s \\
    w y 
  \end{pmatrix} \geq (1 - \theta) \bar{\mu}(z) e \Bigr\} \, .
\end{equation}
This arises from replacing $\| \cdot \|$ in~\eqref{eq:neighborhood} by the \emph{one-sided $\ell_\infty$-norm}. The latter is
the map sending a vector $v$ to $\max(0,\max_i (-v_i))$. 
This is a weak norm in the sense of~\cite{PT14}, it 
is positively homogeneous and subadditive,
but it vanishes on some non-zero vectors.

Our first observation is that the map $\mgap$ commutes with affine combinations.
\begin{proposition}\label{prop:mgap_affine}
  Let $z = (x,w,s,y)$ and $z' = (x',w',s',y')$ be two points in $\Fcal$.
  Then, for all $\alpha\in\R$,
we have
  \[
  \mgap((1-\alpha) z + \alpha z') = (1-\alpha) \mgap(z) + \alpha \mgap(z') \, .
  \]
\end{proposition}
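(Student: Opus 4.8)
The plan is to exploit the fact that, although $z \mapsto \mgap(z)$ is \emph{a priori} a quadratic function of $z = (x,w,s,y)$, its restriction to the affine subspace cut out by the equality constraints $Ax + w = b$ and $s - \transpose{A}y = c$ is actually affine — indeed linear in the pair $(x,y)$. Concretely, I would first record the weak-duality identity: for any $z = (x,w,s,y)$ satisfying these two equality constraints,
\[
\scalar{x}{s} + \scalar{w}{y} = \scalar{x}{c + \transpose{A}y} + \scalar{b - Ax}{y} = \scalar{c}{x} + \scalar{b}{y} \, ,
\]
since the cross terms $\scalar{x}{\transpose{A}y}$ and $\scalar{Ax}{y}$ coincide and cancel. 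Hence $\mgap(z) = \tfrac1N\bigl(\scalar{c}{x} + \scalar{b}{y}\bigr)$ for every feasible $z$.

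Next I would observe that the two equality constraints are affine, so if $z = (x,w,s,y)$ and $z' = (x',w',s',y')$ both satisfy them, then so does $z'' \coloneqq (1-\alpha)z + \alpha z'$ for every $\alpha \in \R$: its $(x,w)$-block is $(1-\alpha)(x,w) + \alpha(x',w')$ and its $(s,y)$-block is $(1-\alpha)(s,y) + \alpha(s',y')$, each an affine combination of points of the respective affine sets. Applying the identity above to $z$, $z'$ and $z''$, and using that $x \mapsto \scalar{c}{x}$ and $y \mapsto \scalar{b}{y}$ are linear, yields
\[
\mgap(z'') = \tfrac1N\bigl(\scalar{c}{(1-\alpha)x + \alpha x'} + \scalar{b}{(1-\alpha)y + \alpha y'}\bigr) = (1-\alpha)\mgap(z) + \alpha\mgap(z') \, ,
\]
which is exactly the claim.

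There is essentially no hard step here; the one thing to notice is that feasibility — the equality constraints — is precisely what kills the quadratic part of $\mgap$. If one prefers a computation that does not pass through the linear reformulation, one can expand $\scalar{x''}{s''} + \scalar{w''}{y''}$ directly by bilinearity: its deviation from $(1-\alpha)\bigl(\scalar{x}{s}+\scalar{w}{y}\bigr) + \alpha\bigl(\scalar{x'}{s'}+\scalar{w'}{y'}\bigr)$ equals $-\alpha(1-\alpha)\bigl(\scalar{x-x'}{s-s'} + \scalar{w-w'}{y-y'}\bigr)$, and this vanishes because $w - w' = -A(x-x')$ and $s - s' = \transpose{A}(y-y')$, so the bracket collapses to $\scalar{A(x-x')}{y-y'} - \scalar{A(x-x')}{y-y'} = 0$. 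Either way the proof is two lines once the right identity is in hand.
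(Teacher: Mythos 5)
Your proof is correct, and both of your arguments are sound. Your primary route — observing that on the affine set cut out by $Ax+w=b$ and $s-\transpose{A}y=c$ the quadratic form $\scalar{x}{s}+\scalar{w}{y}$ collapses to the linear function $\scalar{c}{x}+\scalar{b}{y}$, so $\mgap$ is the restriction of an affine map — is a mild repackaging of what the paper does: the paper instead sets $\Delta z = z'-z$, notes that the differences satisfy the homogeneous constraints $A\Delta x+\Delta w=0$ and $\Delta s-\transpose{A}\Delta y=0$, and deduces the orthogonality $\scalar{\Delta x}{\Delta s}+\scalar{\Delta w}{\Delta y}=0$, which kills the quadratic term in $\alpha\mapsto\mgap(z+\alpha\Delta z)$. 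The two are the same cancellation $\scalar{x}{\transpose{A}y}=\scalar{Ax}{y}$ viewed from different angles; your version has the small added benefit of re-deriving the weak-duality identity $\gap(z)=\scalar{c}{x}+\scalar{b}{y}$ that the paper uses later anyway, while the paper's version is the one that generalizes directly to Lemma~\ref{lemma:newton}, where the same orthogonality of Newton increments is needed. Your closing ``alternative computation'' is in fact verbatim the paper's proof, with the coefficient $-\alpha(1-\alpha)$ made explicit. You also correctly note that the affine combination need only satisfy the equality constraints, not the sign constraints, for the identity to make sense for all $\alpha\in\R$.
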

\begin{proof}
We write $\Delta z \coloneqq z'- z$ and similarly for the components, \ie, $\Delta z = (\Delta x, \Delta w, \Delta s, \Delta y)$. 
Since $z, z' \in \Fcal$, we have $A \Delta x + \Delta w = 0$ and $\Delta s - \transpose{A} \Delta y = 0$.
Employing these equalities it can be verified that $\scalar{\Delta x}{\Delta s} + \scalar{\Delta w}{\Delta y} = 0$.
Therefore, the function $\alpha \mapsto \mgap(z + \alpha \Delta z)$ is
affine, which completes the proof. 
\end{proof}
Interior point methods follow the central path by computing a sequence of points in a prescribed neighborhood $\Ncal_\theta$ of the central path, in such a way that the duality measure decreases.
Here, we do not precisely specify $\Ncal_\theta$, but we only assume that it arises from the choice of some 
weak norm $\|\cdot\|$. The basic step of the algorithm can be summarized as follows. At iteration $k$, given a current point $z^k = (x^k, w^k, s^k, y^k) \in \Ncal_\theta$ with duality measure $\mu^k = \mgap(z^k)$, and a positive parameter $\mu < \mu^k$, the algorithm aims at solving the system \eqref{eq:classical_central_path} up to a small error in order to get an approximation of the point of the central path with parameter $\mu$. To this end, it starts from the point $z^k$ and exploits the Newton direction $\Delta z = (\Delta x, \Delta w, \Delta s, \Delta y)$, which satisfies
\begin{equation}
\begin{aligned}
A \Delta x + \Delta w & = 0 \\
\Delta s - \transpose{A}\Delta y & = 0 \\
\begin{pmatrix}
x^k \Delta s \\
w^k \Delta y
\end{pmatrix}
+ 
\begin{pmatrix}
\Delta x \, s^k \\
\Delta w \, y^k
\end{pmatrix}
& = 
\mu e  - \begin{pmatrix}
x^k s^k \\
w^k y^k 
\end{pmatrix} \, .
\end{aligned}\label{eq:newton}
\end{equation}
Then, the algorithm follows the direction $\Delta z$ and iterates to a point of the form
\[
z(\alpha) \coloneqq z^k + \alpha \Delta z \, ,
\]
where $0 < \alpha \leq 1$. The correctness and the convergence of the approach are based on the conditions that, firstly, the point $z^{k+1} \coloneqq z(\alpha)$ still belongs to the neighborhood $\Ncal_\theta$ and that, secondly, the ratio of the new value $\mu^{k+1} \coloneqq \bar \mu (z(\alpha))$ of the duality measure with $\mu^k$ is sufficiently small. The following lemma shows that, in fact, the whole line segment between $z^k$ and $z(\alpha)$ is contained in $\Ncal_\theta$.

\begin{lemma}\label{lemma:newton}
If $z^k$ and $z(\alpha)$ are contained in $\Ncal_\theta$ then $z(\beta) \in \Ncal_\theta$ for all $\beta \in [0,\alpha]$.
\end{lemma}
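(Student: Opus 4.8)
The plan is to track, along the Newton segment $\beta \mapsto z(\beta) = z^k + \beta\,\Delta z$ for $\beta \in [0,\alpha]$, the deviation $v(\beta) \coloneqq \begin{psmallmatrix} x(\beta)s(\beta) \\ w(\beta)y(\beta) \end{psmallmatrix} - \mgap(z(\beta))\,e$, where $z(\beta) = (x(\beta),w(\beta),s(\beta),y(\beta))$. As a preliminary observation, $z(\beta) \in \Fcal^\circ$ for every $\beta \in [0,\alpha]$: indeed $z(\beta) = (1 - \beta/\alpha)\,z^k + (\beta/\alpha)\,z(\alpha)$ is an affine combination of the elements $z^k$ and $z(\alpha)$ of the affine space $\Fcal$, and for $\beta \in [0,\alpha]$ this combination is convex, so strict positivity is inherited from $z^k, z(\alpha) \in \Fcal^\circ$.

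The computational core is to expand the Hadamard products. From the last block of~\eqref{eq:newton} one obtains
\[
\begin{pmatrix} x(\beta)s(\beta) \\ w(\beta)y(\beta) \end{pmatrix}
  = (1-\beta)\begin{pmatrix} x^k s^k \\ w^k y^k \end{pmatrix} + \beta\mu e + \beta^2 d \, ,
  \qquad d \coloneqq \begin{pmatrix} \Delta x\,\Delta s \\ \Delta w\,\Delta y \end{pmatrix} \, .
\]
Summing all coordinates and invoking the identity $\scalar{\Delta x}{\Delta s} + \scalar{\Delta w}{\Delta y} = 0$ from the proof of Proposition~\ref{prop:mgap_affine} gives $\mgap(z(\beta)) = (1-\beta)\mu^k + \beta\mu$, where $\mu^k \coloneqq \mgap(z^k)$; in particular $\mgap(z(\beta))$ is affine in $\beta$, consistently with Proposition~\ref{prop:mgap_affine}. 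Subtracting $\mgap(z(\beta))\,e$ from the display yields $v(\beta) = (1-\beta)\,v(0) + \beta^2 d$, and evaluating this at $\beta = \alpha$ to eliminate $d$ leads to
\[
v(\beta) = \Bigl( (1-\beta) - \tfrac{1-\alpha}{\alpha^2}\,\beta^2 \Bigr)\,v(0) + \tfrac{\beta^2}{\alpha^2}\,v(\alpha) \, .
\]

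The only genuinely delicate point is that the two scalar coefficients above are nonnegative for $\beta \in [0,\alpha]$, which uses $0 < \alpha \le 1$ (as holds in the algorithm): the coefficient $\beta^2/\alpha^2$ is obviously $\ge 0$, while $g(\beta) \coloneqq (1-\beta) - \tfrac{1-\alpha}{\alpha^2}\beta^2$ is concave (its leading coefficient is $\le 0$ since $\alpha \le 1$) with $g(0) = 1 \ge 0$ and $g(\alpha) = 0$, hence $g \ge 0$ on $[0,\alpha]$. Granting this, positive homogeneity and subadditivity of the weak norm give $\norm{v(\beta)} \le g(\beta)\,\norm{v(0)} + \tfrac{\beta^2}{\alpha^2}\,\norm{v(\alpha)}$. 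Plugging in the hypotheses $\norm{v(0)} \le \theta\mu^k$ and $\norm{v(\alpha)} \le \theta\,\mgap(z(\alpha)) = \theta\bigl((1-\alpha)\mu^k + \alpha\mu\bigr)$ (since $z^k, z(\alpha) \in \Ncal_\theta$) and simplifying, the right-hand side equals $\theta\bigl((1-\beta)\mu^k + \tfrac{\beta^2}{\alpha}\mu\bigr)$; and since $0 \le \beta \le \alpha$ and $\mu > 0$, we have $\tfrac{\beta^2}{\alpha}\mu \le \beta\mu$, so $\norm{v(\beta)} \le \theta\bigl((1-\beta)\mu^k + \beta\mu\bigr) = \theta\,\mgap(z(\beta))$. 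Together with $z(\beta) \in \Fcal^\circ$ from the preliminary observation, this yields $z(\beta) \in \Ncal_\theta$, completing the proof.
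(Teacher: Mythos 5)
Your proof is correct and follows essentially the same route as the paper's: the same expansion of the Hadamard products via the Newton system, the same elimination of the quadratic term $\Delta x\,\Delta s$, $\Delta w\,\Delta y$ by evaluating at $\beta=\alpha$, and the same use of positive homogeneity and subadditivity of the weak norm. The only (harmless) differences are that you justify the nonnegativity of the coefficient $g(\beta)$ by a concavity argument where the paper simply asserts it, and you obtain the affinity of $\mgap(z(\beta))$ by summing coordinates directly rather than via $\bar\mu(z(1))=\mu$ and Proposition~\ref{prop:mgap_affine}.
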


\begin{proof}
The point $z(\beta)$ lies in $\Fcal^\circ$ since the latter set is convex. We use the notation $z(\beta) = (x(\beta), w(\beta), s(\beta), y(\beta))$. Using the last equality in~\eqref{eq:newton}, we can write
\[
\begin{psmallmatrix}
x(\beta) s(\beta) \\
w(\beta) y(\beta)
\end{psmallmatrix}
= \beta \mu e + (1 - \beta) 
\begin{psmallmatrix} 
x^k s^k \\
w^k y^k
\end{psmallmatrix} + \beta^2 
\begin{psmallmatrix}
\Delta x \Delta s \\
\Delta w \Delta y	
\end{psmallmatrix} \, .
\]
The first two equalities in~\eqref{eq:newton} entail $\scalar{\Delta x}{\Delta s} + \scalar{\Delta w}{\Delta y} = 0$. Exploiting the last equality, we
get $\bar{\mu}(z(1))=\mu$. Then, it follows
from Proposition~\ref{prop:mgap_affine} that $\bar{\mu}(z(\beta))=(1-\beta)\mu^k
+ \beta \mu$. 
We deduce that
\[
\begin{psmallmatrix}
x(\beta) s(\beta) \\
w(\beta) y(\beta)
\end{psmallmatrix} - \bar \mu(z(\beta)) e = 
(1 - \beta) 
\Bigl[\begin{psmallmatrix} 
x^k s^k \\
w^k y^k
\end{psmallmatrix}
- \mu^k e \Bigr]
+ \beta^2 \begin{psmallmatrix}
\Delta x \Delta s \\
\Delta w \Delta y	
\end{psmallmatrix} \, .
\]
The same relation holds when $\beta = \alpha$. In this way, we eliminate the term $\begin{psmallmatrix}
\Delta x \Delta s \\
\Delta w \Delta y	
\end{psmallmatrix}$ to write
\[
\begin{psmallmatrix}
x(\beta) s(\beta) \\
w(\beta) y(\beta)
\end{psmallmatrix} - \bar \mu(z(\beta)) e =
\bigl((1 - \beta) - \frac{\beta^2}{\alpha^2} (1-\alpha)\bigr)
\Bigl[\begin{psmallmatrix} 
x^k s^k \\
w^k y^k
\end{psmallmatrix}
- \mu^k e \Bigr]
+ \frac{\beta^2}{\alpha^2} \Bigl[\begin{psmallmatrix}
x(\alpha) s(\alpha) \\
w(\alpha) y(\alpha)
\end{psmallmatrix} - \bar \mu(z(\alpha)) e  \Bigr] \, .
\]
Since $\beta \leq \alpha \leq 1$, the term $(1 - \beta) - \frac{\beta^2}{\alpha^2} (1-\alpha)$ is non-negative. Using the fact that $z^k$ and $z(\alpha)$ belong to $\Ncal_\theta$, and the subadditivity and positive homogeneity of the weak norm $\|\cdot\|$, we deduce that
\begin{align*}
\Bigl\|
\begin{psmallmatrix}
x(\beta) s(\beta) \\
w(\beta) y(\beta)
\end{psmallmatrix} - \bar \mu(z(\beta)) e
\Bigr\|
& \leq 
\bigl((1 - \beta) - \frac{\beta^2}{\alpha^2} (1-\alpha)\bigr) \theta \mu^k + 
\frac{\beta^2}{\alpha^2} \theta \bar \mu(z(\alpha)) \\
& = \theta \bigl((1-\beta) \mu^k + \frac{\beta^2}{\alpha} \mu\bigr) \\
& \leq \theta \bigl((1-\beta) \mu^k + \beta \mu\bigr) = \theta \bar \mu(z(\beta)) \quad \text{as} \ \beta \leq \alpha \, . \qedhere
\end{align*}
\end{proof}
The implementation of the basic iteration step which we have previously described varies from one interior point method to another. In particular, there exists several strategies for the choice of the neighborhood, the parameter $\mu$ with respect to the current value of the duality measure $\mu^k$, and the step length $\alpha$, in order to achieve a polynomial-time complexity. Let us describe in more detail the main ones. Considering the large variety of existing path-following interior point methods in the literature, we stick to the classification of~\cite[Chapter~5]{Wright}, and refer to it for a complete account on the topic.

Short-step interior point methods, like~\cite{kojima_short_step,AdlerMonteiro}, use an $\ell_2$-neighborhood of prescribed size $\theta$, and set $\mu$ to $\sigma \mu^k$ where $\sigma < 1$ is constant throughout the method (chosen in a careful way to ensure the convergence), and $\alpha$ to $1$. In contrast, long-step interior point methods, such as~\cite{kojima_long_step}, exploit the wider neighborhood $\Ncal_\theta^{-\infty}$, allow more freedom for the choice of $\mu$ at every iteration ($\mu$ is set to $\sigma \mu^k$ where $\sigma < 1$ is chosen in prescribed interval $[\sigma^{\min}, \sigma^{\max}]$), and take $\alpha \in [0,1]$ as large as possible to ensure that $z(\alpha) \in \Ncal_\theta^{-\infty}$. Another important class of methods, the so-called \emph{predictor-corrector} ones, make use of two nested $\ell_2$-neighborhoods $\Ncal_{\theta'}$ and $\Ncal_{\theta}$ ($\theta' < \theta$), and alternate between predictor and corrector steps. In the former, $\mu$ is optimistically set to $0$ (the duality measure of optimal solutions), while $\alpha$ is chosen as the largest value in $[0,1]$ such that $z(\alpha) \in \Ncal_{\theta}$. The next corrector step aims at ``centering'' the trajectory by doing one Newton step in the direction of the point of the central path with parameter $\mu^{k+1} = \bar \mu(z(\alpha))$. This means that the duality measure is kept to $\mu^{k+1}$, and the step length is set $1$. A careful choice of $\theta$ depending on $\theta'$ ensures that we obtain in this way a point in the narrower neighborhood $\Ncal_{\theta'}$. 

The predictor-corrector scheme, initially introduced in~\cite{MizunoToddYe}, has inspired several works. Let us mention the one of Vavasis and Ye~\cite{VavasisYe}, who made a step towards a strongly polynomial complexity by arriving at an iteration complexity upper bound depending on the matrix $A$ only. Their technique has been later refined into more practical algorithms, see~\cite{MegiddoMizunoTsuchiya,MonteiroTsuchiya,KitaharaTsuchiya}. The difference of these methods with the original predictor-corrector one is that they sometimes exploit another direction than the Newton one, called the \emph{layered least squares direction}. However, in such iterations, the step length $\alpha$ is always chosen so that for all $0 \leq \beta \leq \alpha$, the point $z(\beta)$ lies in the neighborhood $\Ncal_{\theta}$ (see~\cite[Theorem~9]{VavasisYe}).

In consequence of Lemma~\ref{lemma:newton} and the previous discussion, the aforementioned interior point methods all share the property that they describe a piecewise linear trajectory entirely included in a certain neighborhood $\Ncal_\theta^{-\infty}$ of the central path, where $\theta$ is a prescribed value.\footnote{We point out that the $\ell_2$-neighborhood with size $\theta$ is obviously contained in $\Ncal_\theta^{-\infty}$.} We stress that this property is the only assumption made in our complexity result, Theorem~\ref{thm:complexity:intro}, on interior point methods. More formally, this trajectory is a \emph{polygonal curve} in $\R^{2N}$, \ie, a union of finitely many segments $[z^0, z^1], [z^1, z^2], \dots, [z^{p-1}, z^p]$. Since polygonal curves play an important role in the paper, we  introduce some terminology.  We say that a polygonal curve is \emph{supported} by the vectors $v^1, \dots, v^p$ when the latter correspond to the direction vectors of the successive segments $[z^0, z^1], [z^1, z^2], \dots, [z^{p-1}, z^p]$. In the case where we equip the curve with an orientation, we assume that the direction vectors are oriented consistently.

\section{Ingredients From Tropical Geometry}
\noindent
Tropical geometry provides a combinatorial approach to studying algebraic varieties defined over a field with a non-Archimedean valuation.
To deal with optimization issues, we need some valued field which is ordered.
We restrict our attention to one such field, which is particularly convenient for our application, to keep our exposition elementary.
Our field of choice, which we denote as $\K$, are the absolutely convergent generalized real Puiseux series.
Here `generalized' means that we allow arbitrary real numbers as exponents as in \cite{markwig2007field}.
Note that the ordinary Puiseux series have value group $\mathbb{Q}$, leading to restrictions which are artificial from a tropical perspective.
In some sense, $\K$ is the ``simplest'' real closed valued field for which we can obtain our results.

\subsection{Fields of real Puiseux series and Puiseux polyhedra}
\label{subsec:fields}
The field $\K$ of \emph{absolutely convergent generalized real Puiseux series} consists of elements of the form
\begin{equation}
\bm f = \sum_{\alpha \in \R} a_\alpha t^\alpha \, ,
\label{e-series}
\end{equation}
where $a_\alpha\in \R$ for all $\alpha$, and such that: %
\begin{inparaenum}[(i)]
\item the support $\{\alpha \in \R\colon a_\alpha \neq 0\}$ is either finite or has $-\infty$ as the only accumulation point; 
\item there exists $\rho > 0$ such that the series absolutely converges for all $t > \rho$.
\end{inparaenum}
Note that the null series is obtained by taking an empty support. When $\bm f \neq 0$, the first requirement ensures that the support has a greatest element $\alpha_0 \in \R$. We say that the element $\bm f$ is \emph{positive} when the associated coefficient $a_{\alpha_0}$ is positive. This extends to a total ordering of $\K$, defined by $\bm f \leq \bm g$ if $\bm g - \bm f$ is the null series or positive. Equivalently, the relation $\bm f \leq \bm g$ holds if and only if $\bm f(t) \leq \bm g(t)$ for all sufficiently large $t$. We write $\K_+$ for the set of non-negative elements of $\K$. 

The \emph{valuation} map $\val \colon \K \to \R \cup \{-\infty\}$ is given as follows. For $\bm f \in \K$ the valuation $\val(\bm f)$ is defined as the greatest element $\alpha_0$ of the support of $\bm f$ if $\bm f \neq 0$, and $-\infty$ otherwise. Denoting by $\log_t(\cdot) \coloneqq \frac{\log(\cdot)}{\log t}$ the logarithm with respect to the base $t > 0$ we have
\[
\val(\bm f) = \lim_{ t \to +\infty } \log_t |\bm f(t)| \, ,
\]
with the convention $\log_t 0 = -\infty$. Observe that, for all $\bm f, \bm g \in \K$, this yields
\begin{align}\label{e-morphism}
 \val (\bm f + \bm g) \leq \max ( \val( \bm f), \val( \bm g)) \quad \text{and} \quad \val (\bm f \bm g ) = \val (\bm f) + \val (\bm g) \, .
\end{align}
The inequality for the valuation of the sum turns into an equality when the leading terms in the series $\bm f$ and $\bm g$ do not cancel. In particular, this is the case when $\bm f$ and $\bm g$ belong to $\K_+$.

We point out that $\K$ actually agrees with the field of generalized Dirichlet series originally considered by Hardy and Riesz~\cite{hardy}.
This was already used in the tropical setting in~\cite{ABG96}.
Classical Dirichlet series can be written as $\sum_k a_k k^s$, and these are obtained from~\eqref{e-series} by substituting $t=\exp(s)$ and $\alpha_k =\log k$.
It follows from results of van den Dries and Speissegger~\cite{Dries1998} that the field $\K$ is real closed.
The interest in such fields comes from \emph{Tarski's Principle}, which says that every real closed field has the same first-order properties as the reals.

As a consequence of the previous fact, we can define polyhedra over Puiseux series as usual. In more details, given $d \geq 1$, a \emph{(Puiseux) polyhedron} is a set of the form 
\begin{equation}\label{eq:puiseux_polyhedron}
\bm \Pcal = \{ \bm x \in \K^d \colon \bm A \bm x \leq \bm b \} \, ,
\end{equation}
where $\bm A \in \K^{p \times d}$, $\bm b \in \K^p$ (with $p \geq 0$), and $\leq$ stands for the partial order over $\K^p$. By Tarski's principle, Puiseux polyhedra have the same (first-order) properties as their analogs over $\R$. In particular, the Minkowski--Weyl theorem applies, so that every Puiseux polyhedron admits an internal representation by means of a finite set of points and rays in $\K^d$. 

Using a field of convergent series allows us to think of Puiseux polyhedra as parametric families of ordinary polyhedra. Indeed, to any Puiseux polyhedron $\bm \Pcal$ of the form~\eqref{eq:puiseux_polyhedron}, we associate the family of polyhedra $\bm\Pcal(t)\subset \R^d$, defined for $t$ large enough,
\[ 
\bm\Pcal(t) := \{x\in \R^d\colon \bm{A}(t) x\leq \bm{b}(t)\} \, .
\]
The next proposition implies in particular that the family
of polyhedra $\bm\Pcal(t)$ is independent of the choice
of the external representation of $\bm\Pcal$.
\begin{proposition}
\label{prop-bygenerators}
Suppose that 
$\bm\Pcal$ is the Minkowski sum of the convex hull of vectors $\bm u^1,\dots,\bm u^q\in \K^d$ and of the convex cone generated by vectors $\bm v^1,\dots, \bm v^r\in \K^d$ (here, the notions of convex hull and of convex cone are understood
over $\K$). Then, for $t$ large enough, $\bm\Pcal(t)$ is 
the Minkowski sum of the convex hull of vectors $\bm u^1(t),\dots,\bm u^q(t)\in \R^d$ and of the convex cone generated by vectors $\bm v^1(t),\dots, \bm v^r(t)\in \R^d$ (the notions of convex hull and of convex cone are now understood
over $\R$). 
\end{proposition}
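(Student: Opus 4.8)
The plan is to reduce the statement to a transfer principle between the real closed field $\K$ and the reals. Abbreviate
\begin{gather*}
P \coloneqq \operatorname{conv}(\bm u^1,\dots,\bm u^q) + \operatorname{cone}(\bm v^1,\dots,\bm v^r) \subseteq \K^d , \\
P(t) \coloneqq \operatorname{conv}(\bm u^1(t),\dots,\bm u^q(t)) + \operatorname{cone}(\bm v^1(t),\dots,\bm v^r(t)) \subseteq \R^d ,
\end{gather*}
the latter defined for $t$ large enough. The hypothesis is $\bm\Pcal = P$, and we must show $\bm\Pcal(t) = P(t)$ for $t$ large. Each of the two inclusions is expressible by a first-order formula in the language of ordered fields whose parameters are the entries of $\bm A$, $\bm b$, $\bm u^1,\dots,\bm u^q$ and $\bm v^1,\dots,\bm v^r$. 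For instance $\bm\Pcal \subseteq P$ reads
\begin{multline*}
\forall\, \bm x \in \K^d, \quad \bm A\bm x \le \bm b \ \Longrightarrow \\
\exists\, \bm\lambda \in \K_+^q, \ \exists\, \bm\mu \in \K_+^r, \quad \sum_{i=1}^q \lambda_i = 1 \ \wedge\ \sum_{i=1}^q \lambda_i \bm u^i + \sum_{j=1}^r \mu_j \bm v^j = \bm x \, ,
\end{multline*}
while $P \subseteq \bm\Pcal$ reads
\[
\forall\, \bm\lambda \in \K_+^q, \ \forall\, \bm\mu \in \K_+^r, \quad \Bigl(\sum_{i=1}^q \lambda_i = 1\Bigr) \ \Longrightarrow\ \bm A\Bigl(\sum_{i=1}^q \lambda_i \bm u^i + \sum_{j=1}^r \mu_j \bm v^j\Bigr) \le \bm b \, .
\]
With the same parameters reinterpreted as the real numbers $\bm A(t), \bm b(t), \bm u^i(t), \bm v^j(t)$, these formulas assert precisely $\bm\Pcal(t) \subseteq P(t)$ and $P(t) \subseteq \bm\Pcal(t)$. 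Hence it is enough to establish the following transfer principle: for every first-order formula $\psi(\bm z)$ over ordered fields and every $\bm f = (\bm f_1,\dots,\bm f_k) \in \K^k$, the sentence $\psi(\bm f)$ holds in $\K$ if and only if $\psi(\bm f(t))$ holds in $\R$ for all sufficiently large $t$.

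For the transfer principle, recall that $\K$ is real closed, so by quantifier elimination (Tarski--Seidenberg) the formula $\psi(\bm z)$ is equivalent, over every real closed field, to a finite disjunction of finite conjunctions of sign conditions $\sign\bigl(P_{ab}(\bm z)\bigr) = \varepsilon_{ab}$ with $P_{ab}$ a polynomial with integer coefficients and $\varepsilon_{ab} \in \{-1,0,1\}$. Fix $\bm f \in \K^k$. Then each $P_{ab}(\bm f)$ is again an element of $\K$, and since evaluation at a fixed large $t$ is a ring homomorphism $\K \to \R$ one has $P_{ab}(\bm f)(t) = P_{ab}(\bm f(t))$. Moreover, by the very definition of the order on $\K$, a nonzero series $\bm g \in \K$ is positive (resp.\ negative) in $\K$ if and only if $\bm g(t) > 0$ (resp.\ $\bm g(t) < 0$) for all $t$ large --- the leading monomial dominating as $t \to +\infty$ --- and $\bm g = 0$ if and only if $\bm g(t) = 0$ for $t$ large. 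Consequently $\sign_\K\bigl(P_{ab}(\bm f)\bigr) = \varepsilon_{ab}$ if and only if $\sign\bigl(P_{ab}(\bm f(t))\bigr) = \varepsilon_{ab}$ for all $t$ large. As $\psi$ involves only finitely many pairs $(a,b)$, taking the maximum of the finitely many thresholds shows that the Boolean combination holds in $\K$ if and only if it holds in $\R$ for all $t$ large; quantifier elimination applied over $\R$ turns this back into $\psi(\bm f(t))$ holding for all $t$ large. Applying this to the two displayed formulas yields $\bm\Pcal(t) = P(t)$ for all sufficiently large $t$.

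The inclusion $P(t) \subseteq \bm\Pcal(t)$ can alternatively be obtained by hand, with no quantifiers involved: from $\bm\Pcal = P$ one has $\bm A\bm u^i \le \bm b$ for each $i$ and, since $\bm u^1 + \lambda\bm v^j \in \bm\Pcal$ for every $\lambda \in \K_+$, also $\bm A\bm v^j \le 0$ for each $j$; evaluating these finitely many inequalities of $\K$ at large $t$ and taking convex, resp.\ conic, combinations gives $\bm A(t) x \le \bm b(t)$ for every $x \in P(t)$. The substance of the proposition is therefore the reverse inclusion, and the single real obstacle is the transfer principle above --- more precisely, the passage from a quantifier-free condition on the Puiseux series $\bm f$ to the corresponding condition on the real values $\bm f(t)$, valid \emph{uniformly} for all large $t$. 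This uniformity is exactly what the finiteness of the quantifier-elimination output provides, while the equivalence ``sign in $\K$ $\Leftrightarrow$ eventual sign of the real function'' is the only, entirely elementary, analytic ingredient.
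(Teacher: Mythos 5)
Your proof is correct, and for the substantive inclusion $\bm\Pcal(t)\subset\operatorname{conv}(\bm u^1(t),\dots,\bm u^q(t))+\operatorname{cone}(\bm v^1(t),\dots,\bm v^r(t))$ it takes a genuinely different route from the paper's. The paper handles the easy inclusion exactly as in your closing ``by hand'' remark, but for the converse it argues geometrically: an extreme point of $\bm\Pcal$ is characterized by the full rank of the gradients of its active constraints, this rank condition (a finite collection of determinantal sign conditions) persists after evaluation at large $t$, and a Minkowski--Weyl argument over $\R$ then identifies the generators of $\bm\Pcal(t)$ with the evaluations of those of $\bm\Pcal$; the same is done for extreme rays. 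You instead encode the whole set equality as two first-order sentences whose only free variables are the entries of $\bm A$, $\bm b$ and the generators, and prove a general Tarski-type transfer principle via quantifier elimination, using that the eventual sign of $P(\bm f(t))$ as $t\to+\infty$ agrees with the sign of $P(\bm f)$ in $\K$. This is sound; in particular the non-uniformity warned about after \eqref{e-equivpt} does not affect you, since the quantified variables $\bm x,\bm\lambda,\bm\mu$ are eliminated and the threshold in $t$ depends only on the finitely many polynomials in the fixed parameter tuple. What your route buys is uniformity and generality: it delivers \eqref{e-equivpt} as a special case and would transfer any first-order property of the data, and it sidesteps the slightly delicate bookkeeping in the paper's argument about which extreme points of $\bm\Pcal(t)$ arise from extreme points of $\bm\Pcal$. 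What the paper's route buys is explicitness: it isolates the concrete determinants of active-constraint submatrices whose signs must stabilize, which is exactly the data reused for the quantitative estimate of Theorem~\ref{th:polyhedron_metric_estimate} via Cramer's rule, whereas Tarski--Seidenberg gives no effective control on the threshold. Invoking quantifier elimination is nevertheless consistent with the paper's toolkit, since Tarski's principle is already used there to obtain Minkowski--Weyl over $\K$.
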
 
\begin{proof}
Let $\bm\Qcal(t)$
denote the Minkowski sum of the convex hull of vectors $\bm u^1(t),\dots,\bm u^q(t)$ and of the convex cone generated by vectors $\bm v^1(t),\dots, \bm v^r(t)$. Since $\bm\Pcal$ contains $\bm u^1,\dots,\bm u^q$
together with the rays $\K_+ \bm v^1,\dots,\K_+ \bm v^r$,
we have $\bm A \bm u^i\leq \bm b$ and $\bm A\bm v^j \leq 0$
for all $i\in [q]$ and $j\in[r]$. It follows
that $\bm A(t) \bm u^i(t)\leq \bm b(t)$ and
$\bm A(t)\bm v^j(t) \leq 0$ holds 
for all $i\in [q]$ and $j\in[r]$ and
for $t$ large enough. Hence, $\bm\Pcal(t)\supset \bm\Qcal(t)$
for $t$ large enough.

Let us now consider an extreme point $\bm u$ of $\bm \Pcal$.
Then, a characterization of the extreme points of a polyhedron
shows that the collection of gradients of the constraints
$\bm A_k \bm x \leq \bm b_k$, $k\in [p]$ 
which are active
at point $\bm x=\bm u$ constitutes
a family of full rank. This property can be expressed
in the first order theory of $\K$. It follows that,
for $t$ large enough, the same property holds
for the collection of the 
gradients of the constraints
$\bm A_k(t) x \leq \bm b_k(t)$, $k\in [p]$ 
that are active
at point $x=\bm u(t)$. Hence
$\bm u(t)$ is an extreme point of $\bm\Pcal(t)$,
and so, $\bm u(t)$ must belong to the set
$\{\bm u^1(t),\dots, \bm u^q(t)\}$. A similar argument
shows that if $\bm v\in \K^d$ generates
an extreme ray of $\bm \Pcal$, the ray generated by $\bm v(t)$
is extreme in $\bm \Pcal(t)$ for $t$ large enough,
and so $\bm v(t)\in \cup_{\ell\in [r]} \R_+ \bm v^\ell(t)$.
It follows that $\bm \Pcal(t) \subset \bm \Qcal(t)$
holds for $t$ large enough.
\end{proof}
\begin{remark}
One can show, by arguments of the same nature as in the latter
proof, that for all $\bm x\in \K^d$,
\begin{align}
\bm x\in \bm \Pcal\iff (\bm x(t)\in \bm \Pcal(t) \text{ for } t\text{ large enough}) \enspace.\label{e-equivpt}
\end{align}
Note, however, that the smallest value $t_0$ such that
$\bm x(t)\in \bm \Pcal(t)$ for all $t\geq t_0$ cannot be
bounded uniformly in $\bm x$.
\end{remark}

\subsection{Tropical polyhedra}\label{subsec:tropicalization}
\noindent
Tropical polyhedra may be informally thought of as the analogues of convex polyhedra over the tropical semifield $\Trop$. 
Note that in this semifield, the zero and unit elements are $-\infty$ and $0$, respectively. Given $\lambda \in \Trop \setminus \{-\infty\}$, we shall also denote by $\lambda^{\tdot (-1)}$ the inverse of $\lambda$ for the tropical multplication, \ie, $\lambda^{\tdot (-1)} \coloneqq -\lambda$.
The tropical addition and multiplication extend to vectors and matrices in the usual way. More precisely, $A \tplus B \coloneqq (A_{ij} \tplus B_{ij})_{ij}$, and $A \tdot B \coloneqq (\tsum_k A_{ik} \tdot B_{kj})_{ij}$, where $A$ and $B$ are two matrices of appropriate sizes with entries in $\Trop$. Further, the $d$-fold Cartesian product $\Trop^d$ is equipped with the structure of semimodule, thanks to the tropical multiplication $\lambda \tdot v \coloneqq (\lambda \tdot v_i)_i$ of a vector $v$ with a scalar~$\lambda$. 

A \emph{tropical halfspace} of $\Trop^d$ is the set of points $x \in \Trop^d$ which satisfy one tropical linear (affine) inequality,
\[
\max( \alpha_1 + x_1, \dots, \alpha_d + x_d, \beta ) \leq \max(\alpha'_1 + x_1, \dots, \alpha'_d + x_d, \beta' ) \,,
\]
where $\alpha, \alpha' \in \Trop^d$ and $\beta, \beta' \in \Trop$. A \emph{tropical polyhedron} is the intersection of finitely many tropical halfspaces. Equivalently, it can be written in the form
\[
\bigl\{ x \in \Trop^d \colon A \tdot x \tplus b \leq A' \tdot x \tplus b' \bigr\}
\] 
where $A, A' \in \Trop^{p \times d}$ and $b, b' \in \Trop^p$ for some $p \geq 0$.  The tropical semifield  $\Trop=\R\cup\{-\infty\}$ is equipped with the order topology, which gives rise to the product topology on $\Trop^d$.  Tropical halfspaces, and thus tropical polyhedra, are closed in this topology.  Note also that the subset topology on $\R^d \subset \Trop^d$ agrees with the usual Euclidean topology.

An analogue of the Minkowski--Weyl Theorem allows for the ``interior represention'' of a tropical polyhedron $\Pcal$ in terms of linear combinations of points and rays~\cite{GaubertKatz2011minimal}.
That is to say that there exist finite sets $U, V \subset \Trop^d$ such that $\Pcal$ is the set of all points of the form 
\begin{align}
\Bigl(\tsum_{u \in U} \alpha_u \tdot u \Bigr) \; \tplus \; \Bigl(\tsum_{v \in V} \beta_v \tdot v \Bigr)
\label{e-minkowskiweyltropical}
\end{align}
where $\alpha_u,\beta_v\in \Trop$ and $\tsum_{u \in U}\alpha_u$ is equal to the tropical unit, \ie, the real number $0$.
We shall say that the tropical polyhedron $\Pcal$ is \emph{generated} by the sets $U$ and $V$. The term $\tsum_{u \in U} \alpha_u \tdot u $ is a tropical convex combination of the points in $U$, while $\tsum_{v \in V} \beta_v \tdot v$ is a tropical linear combination of the vectors in $V$. These are the tropical analogues of convex and conic hulls, respectively. Indeed, all scalars $\alpha_u, \beta_v$ are implicitly non-negative in the tropical sense, \ie, they are greater than or equal to the tropical zero element $-\infty$. We point out that the ``tropical polytopes'' considered by Develin and Sturmfels~\cite{develin2004} are obtained by omitting the term $\tsum_{u \in U} \alpha_u \tdot u $ and by requiring the vectors $v\in V$ to have finite coordinates in the representation~\eqref{e-minkowskiweyltropical}.

If $\Pcal$ is a non-empty tropical polyhedron, the supremum $\sup(u,v) = u \tplus v$ with respect to the partial order $\leq$ of $\Trop^d$ of any two points $u, v \in \Pcal$ 
also belongs to $\Pcal$. If in addition $\Pcal$ is compact, then the supremum of an arbitrary subset of $\Pcal$ is well-defined and
belongs to $\Pcal$. Consequently, there is a unique element in $\Pcal$ which is the coordinate-wise maximum of all elements in
$\Pcal$. We call it the \emph{(tropical) barycenter} of $\Pcal$, as it is the mean of $\Pcal$ with respect to the uniform idempotent measure. 

\begin{figure}
\begin{tikzpicture}[trop/.style={convex,draw=black,very thick},scale=0.6]
\draw[gray!40!] (-0.5,-0.5) grid (9.5,7.5);

\filldraw[orange] (1,5) circle (3pt);
\filldraw[orange] (5,7) circle (3pt);
\filldraw[orange] (1,3) circle (3pt);
\filldraw[orange] (4,1) circle (3pt);
\filldraw[orange] (6,1) circle (3pt);
\filldraw[orange] (8,6) circle (3pt);
\draw[orange,thick] (1,5)  -- (3,5) -- (5,7);
\draw[orange,thick] (1,3) -- (4,3) -- (4,1);
\draw[orange,thick] (6,1) -- (6,4) -- (8,6);
\end{tikzpicture}
\caption{The three possible shapes of tropical segments in dimension $2$}\label{fig:tropical_segments}
\end{figure}
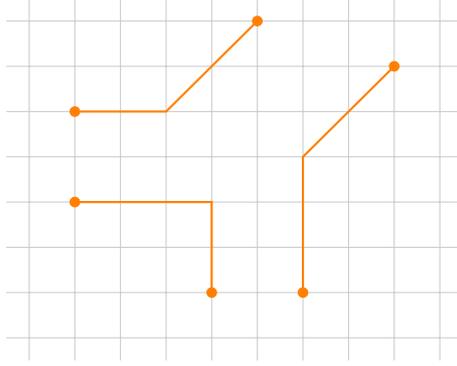

The \emph{tropical segment} between the points $u, v \in \Trop^d$, denoted by $\tsegm(u,v)$, is defined as the set of points of the form $\lambda \tdot u \tplus \mu \tdot v$ such that $\lambda \tplus \mu = 0$. Equivalently, the set $\tsegm(u,v)$ is the tropical polyhedron generated by the sets $U=\{u,v\}$ and $V=\emptyset$. As illustrated in Figure~\ref{fig:tropical_segments}, tropical segments are polygonal curves, and the direction vectors supporting every ordinary segment have their entries in $\{0, \pm 1\}$~\cite[Proposition~3]{develin2004}. We shall slightly refine this statement in the case where $u \leq v$. To this end, for $K\subset[d]$, we denote by $e^K$ the vector whose $k$th entry is equal to $1$ if $k \in K$, and $0$ otherwise.
\begin{lemma}\label{lemma:trop_segments}
Let $u, v \in \Trop^d$ such that $u \leq v$. The tropical segment $\tsegm(u,v)$ is a polygonal curve which, when oriented from $u$ to $v$, consists of ordinary segments supported by direction vectors of the form $e^{K_1}, \dots, e^{K_\ell}$ where $K_1 \subsetneq \dots \subsetneq K_\ell$ and $\ell \leq d$. 
\end{lemma}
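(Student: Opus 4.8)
The plan is to parametrize $\tsegm(u,v)$ explicitly and then simply read off the direction vectors of the resulting piecewise-linear curve.

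First I would exploit the hypothesis $u\le v$ to simplify the defining condition. A point of $\tsegm(u,v)$ has the form $\lambda\tdot u\tplus\mu\tdot v$ with $\max(\lambda,\mu)=0$, hence either ($\mu=0$ and $\lambda\le 0$) or ($\lambda=0$ and $\mu\le 0$). In the first case the $i$-th coordinate is $\max(\lambda+u_i,v_i)=v_i$, because $\lambda+u_i\le u_i\le v_i$; so this case contributes only the point $v$. Thus, writing
\[
p(\mu)\coloneqq\bigl(\max(u_i,\mu+v_i)\bigr)_{i\in[d]}\qquad\text{for }\mu\in[-\infty,0],
\]
we get $p(-\infty)=u$, $p(0)=v$ (again using $u\le v$), and $\tsegm(u,v)=\{\,p(\mu):-\infty\le\mu\le 0\,\}$.

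Next I would analyze $p$ coordinate by coordinate. For each $i$ put $\tau_i\coloneqq u_i-v_i\in[-\infty,0]$ (with the convention $\tau_i\coloneqq 0$ if $u_i=v_i=-\infty$); then $\mu\mapsto\max(u_i,\mu+v_i)$ is constant, equal to $u_i$, for $\mu\le\tau_i$, and has slope $1$ for $\mu\ge\tau_i$. Consequently, on any subinterval of $(-\infty,0]$ on which the set $K(\mu)\coloneqq\{i\in[d]:\mu>\tau_i\}$ is constant and equal to $K$, the continuous piecewise-linear map $p$ restricts to an ordinary segment with direction vector $e^{K}$, oriented towards increasing $\mu$, i.e.\ from $u$ to $v$. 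Since $K(\mu)$ is non-decreasing in $\mu$ for inclusion and takes only finitely many values, $(-\infty,0]$ splits into finitely many maximal intervals of constancy of $K$; after dropping the (at most one) interval on which $K(\mu)=\emptyset$, which produces merely the endpoint $u$, the successive values of $K(\mu)$ form a strictly increasing chain $K_1\subsetneq\dots\subsetneq K_\ell$ of non-empty subsets of $[d]$, with associated direction vectors $e^{K_1},\dots,e^{K_\ell}$. Finally $1\le|K_1|<\dots<|K_\ell|\le d$ gives $\ell\le d$, and the degenerate case $u=v$ corresponds to $\ell=0$.

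This is essentially a bookkeeping argument, so I do not expect a real obstacle. The points requiring care are the conventions for coordinates with $-\infty$ entries --- needed both to verify that the parametrization sweeps out exactly $\tsegm(u,v)$ and nothing more, and to ensure that $\mu\mapsto K(\mu)$ is monotone without exceptions --- together with keeping track of the degenerate pieces (empty $K(\mu)$, or $\tau_i=0$) so that they contribute no segment. Handling these cleanly is what turns the picture of Figure~\ref{fig:tropical_segments} into a rigorous proof.
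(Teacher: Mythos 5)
Your proof is correct and follows essentially the same route as the paper: reduce $\tsegm(u,v)$ to the one-parameter family $u\tplus(\mu\tdot v)$ for $\mu\le 0$, introduce the nested sets $K(\mu)=\{i: u_i<\mu+v_i\}$, and read off the direction vectors $e^{K_1},\dots,e^{K_\ell}$ from the finitely many values of this increasing chain. You merely spell out two steps the paper leaves implicit (the case analysis showing the $\lambda\le 0$, $\mu=0$ branch contributes only the point $v$, and the bookkeeping around $-\infty$ coordinates), which is fine.
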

\begin{proof}
Since $u \leq v$, the set $\tsegm(u,v)$ is reduced to the set of the points of the form $u \tplus (\mu \tdot v)$, where $\mu \leq 0$. Let $K(\mu)$ be the set of $i \in [d]$ such that $u_i < \mu + v_i$. When $\mu$ ranges from $-\infty$ to $0$, $K(\mu)$ takes a finite number of values $K_0 = \emptyset \subsetneq K_1 \subsetneq \dots \subsetneq K_\ell$, where $\ell \leq d$. It is immediate that the ordinary segments constituting the tropical segment $\tsegm(u,v)$ are supported by the vectors $e^{K_1}, \dots, e^{K_\ell}$. 
\end{proof}

We now relate tropical polyhedra with their classical analogues over Puiseux series via the valuation map. The fact that sums of non-negative Puiseux series do not suffer from cancellation translates into the following.
\begin{lemma}\label{lemma:homomorphism}
The valuation map is a monotone and surjective semifield homomorphism from $\K_+$ to $\Trop$.
\end{lemma}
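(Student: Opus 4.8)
The plan is to verify the semifield homomorphism axioms one at a time, relying on the valuation identities~\eqref{e-morphism} established above, and then to read off monotonicity from the additive part. First I would record that $\K_+$ is a sub-semifield of $\K$: it is closed under addition (a sum of non-negative series is non-negative), under multiplication (the leading coefficient of a product is the product of the leading coefficients, hence positive), it contains the additive neutral $0$ and the multiplicative neutral $1 = 1\cdot t^0$, and every nonzero element of $\K_+$ has a positive, hence again non-negative, multiplicative inverse. This makes it meaningful to speak of a semifield homomorphism $\K_+ \to \Trop$, where $\Trop$ carries $\tplus = \max$ and $\tdot = +$ with neutral elements $-\infty$ and $0$.

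For the homomorphism property, the multiplicativity $\val(\bm f\bm g) = \val(\bm f) + \val(\bm g)$ is exactly the second identity in~\eqref{e-morphism}, valid for all $\bm f, \bm g \in \K$. For additivity I must show $\val(\bm f + \bm g) = \max(\val(\bm f), \val(\bm g))$ when $\bm f, \bm g \in \K_+$. If both series are nonzero, their leading coefficients are positive, so in the sum the coefficient of the top degree $\max(\val(\bm f),\val(\bm g))$ is either a single positive leading coefficient or a sum of two positive ones, and in particular is nonzero; hence the inequality in~\eqref{e-morphism} is an equality, as noted just after~\eqref{e-morphism}. The degenerate cases, where $\bm f$ or $\bm g$ equals $0$ (valuation $-\infty$), are immediate since then the sum equals the other series. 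Combined with $\val(0) = -\infty$ and $\val(1) = \val(1\cdot t^0) = 0$, this shows that $\val$ sends the additive and multiplicative neutrals of $\K_+$ to those of $\Trop$ and is compatible with both operations, i.e.\ it is a semifield homomorphism.

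Surjectivity is witnessed by monomials: for every $\lambda \in \R$ the series $t^\lambda = 1\cdot t^\lambda$ lies in $\K_+$ and has $\val(t^\lambda) = \lambda$, while $\val(0) = -\infty$; together these cover all of $\Trop = \R \cup \{-\infty\}$. For monotonicity, suppose $\bm f \leq \bm g$ with $\bm f, \bm g \in \K_+$. By definition of the order on $\K$, $\bm g - \bm f \in \K_+$, so $\bm g = \bm f + (\bm g - \bm f)$ is a sum of two non-negative elements, and the additivity just proved gives $\val(\bm g) = \max\bigl(\val(\bm f), \val(\bm g - \bm f)\bigr) \geq \val(\bm f)$.

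I do not expect a genuine obstacle: the statement is essentially an assembly of the valuation identities~\eqref{e-morphism} together with the already-noted absence of leading-term cancellation for non-negative series. The only points requiring a little care are the degenerate cases involving the zero series (which plays the role of the tropical zero $-\infty$) and the verification that $\K_+$ is itself closed under the semifield operations, both of which are routine.
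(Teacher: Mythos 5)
Your proposal is correct and follows essentially the same route as the paper, which simply invokes the identities~\eqref{e-morphism} together with the remark that no cancellation of leading terms occurs for series in $\K_+$, and declares monotonicity and surjectivity straightforward. You have merely filled in the routine details (closure of $\K_+$, the degenerate zero cases, the monomial witnesses for surjectivity, and the decomposition $\bm g = \bm f + (\bm g - \bm f)$ for monotonicity), all of which are consistent with the paper's intended argument.
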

\begin{proof}
That $\val$ is a homomorphism is a consequence of~\eqref{e-morphism} and the subsequent discussion.
Monotonicity and surjectivity are straightforward.
\end{proof}
This carries over to Puiseux polyhedra in the non-negative orthant:
\begin{proposition}
\label{prop-direct}
The image under the valuation map of any Puiseux polyhedron $\bm \Pcal \subset \K_+^d$ is a tropical polyhedron in $\Trop^d$. 
\end{proposition}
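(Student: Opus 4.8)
The plan is to produce an explicit interior representation of $\val(\bm\Pcal)$ by valuating an interior representation of $\bm\Pcal$. If $\bm\Pcal=\emptyset$ there is nothing to prove, so assume $\bm\Pcal\neq\emptyset$. Since $\bm\Pcal\subset\K_+^d$, it contains no line, hence it is pointed; by Tarski's principle the Minkowski--Weyl theorem holds over $\K$ (as already used around Proposition~\ref{prop-bygenerators}), so $\bm\Pcal$ is the Minkowski sum of the convex hull of its finitely many extreme points $\bm u^1,\dots,\bm u^q$ and of the convex cone generated by its finitely many extreme rays $\bm v^1,\dots,\bm v^r$. The extreme points lie in $\bm\Pcal\subset\K_+^d$, and each recession direction is non-negative: for any $\bm x\in\bm\Pcal$ and any $\lambda\in\K_+$ the point $\bm x+\lambda\bm v^j$ must stay in $\K_+^d$, which forces every coordinate of $\bm v^j$ to be $\geq 0$. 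So we may assume all generators belong to $\K_+^d$.

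I then claim that $\val(\bm\Pcal)$ equals the tropical polyhedron generated, in the sense of \eqref{e-minkowskiweyltropical}, by $U=\{\val(\bm u^1),\dots,\val(\bm u^q)\}$ and $V=\{\val(\bm v^1),\dots,\val(\bm v^r)\}$, and prove this by double inclusion. For ``$\subset$'', take $\bm x=\sum_i\lambda_i\bm u^i+\sum_j\gamma_j\bm v^j$ with $\lambda_i,\gamma_j\in\K_+$ and $\sum_i\lambda_i=1$. Applying Lemma~\ref{lemma:homomorphism} coordinate by coordinate — all series involved being non-negative, there is no cancellation — yields $\val(\bm x)=\bigl(\tsum_i\val(\lambda_i)\tdot\val(\bm u^i)\bigr)\tplus\bigl(\tsum_j\val(\gamma_j)\tdot\val(\bm v^j)\bigr)$, and moreover $\tsum_i\val(\lambda_i)=\val\bigl(\sum_i\lambda_i\bigr)=\val(1)=0$, so $\val(\bm x)$ has the required form.

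For ``$\supset$'', start from $z=\bigl(\tsum_i\alpha_i\tdot\val(\bm u^i)\bigr)\tplus\bigl(\tsum_j\beta_j\tdot\val(\bm v^j)\bigr)$ with $\tsum_i\alpha_i=0$. Using the surjectivity of $\val\colon\K_+\to\Trop$ (Lemma~\ref{lemma:homomorphism}), choose $\lambda_i\in\K_+$ with $\val(\lambda_i)=\alpha_i$ and $\gamma_j\in\K_+$ with $\val(\gamma_j)=\beta_j$ (taking $\gamma_j=0$ when $\beta_j=-\infty$). Since $\max_i\alpha_i=0$, the element $\sigma\coloneqq\sum_i\lambda_i$ has valuation $0$, hence is a unit of $\K_+$ with $\val(\sigma^{-1})=0$; replacing each $\lambda_i$ by $\lambda_i\sigma^{-1}$ we obtain $\sum_i\lambda_i=1$ without changing $\val(\lambda_i)=\alpha_i$. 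Then $\bm x\coloneqq\sum_i\lambda_i\bm u^i+\sum_j\gamma_j\bm v^j$ lies in $\bm\Pcal$, and the same coordinate-wise computation gives $\val(\bm x)=z$. Hence the two sets coincide, and by the tropical Minkowski--Weyl theorem this set is a tropical polyhedron.

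The genuinely routine parts are the coordinate-wise use of Lemma~\ref{lemma:homomorphism} and the feasibility check for $\bm x$; the only point needing care is the normalization $\sum_i\lambda_i=1$ in the reverse inclusion, where one must invoke $\tsum_i\alpha_i=0$ precisely to guarantee that $\sigma$ has valuation $0$ and is therefore invertible in $\K_+$ with an inverse again of valuation $0$, so that the rescaling preserves the prescribed valuations of the convex coefficients. A secondary subtlety, worth a sentence, is the reduction to generators in $\K_+^d$, i.e.\ that the extreme rays of a polyhedron contained in the non-negative orthant have non-negative directions.
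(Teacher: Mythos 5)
Your proof is correct and follows essentially the same route as the paper: take a Minkowski--Weyl generating set for $\bm\Pcal$ (necessarily in $\K_+^d$), push combinations forward through the valuation using Lemma~\ref{lemma:homomorphism}, and for the reverse inclusion lift the tropical coefficients to Puiseux series normalized so the convex coefficients sum to $1$ (the paper uses the explicit lift $t^{\alpha_u}/\bm Z$ with $\val\bm Z=0$, which is your $\lambda_i\sigma^{-1}$). Your extra remark that extreme rays of a polyhedron in the non-negative orthant have non-negative directions is a welcome detail the paper leaves implicit.
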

\begin{proof}
Let $\bm U, \bm V \subset \K^d$ be two finite collections of vectors such that $\bm \Pcal$ is the set of combinations of the form
\begin{align}
\bm x = \sum_{\bm u \in \bm U} \bm \alpha_{\bm u} \bm u  + \sum_{\bm v \in \bm V} \bm \beta_{\bm v} \bm v \, ,
\label{e-minkowskiweyl}
\end{align}
where $\bm \alpha_{\bm u}, \bm \beta_{\bm v} \in \K_+$ and $\sum_{\bm u \in \bm U}\bm \alpha_{\bm u} = 1$. Observe that $\bm U$ and $\bm V$ both lie in $\K_+^d$ since $\bm \Pcal \subset \K_+^d$. From Lemma~\ref{lemma:homomorphism}, we deduce that $\val(\bm \Pcal)$ is contained in the tropical polyhedron $\Pcal$ generated by the sets $U \coloneqq \val(\bm U)$ and $V \coloneqq \val(\bm V)$.
Conversely, any point in $\Pcal$ of the form~\eqref{e-minkowskiweyltropical} is the image under the valuation map of 
\[
\sum_{\bm u \in \bm U} \frac{1}{\bm Z} t^{\alpha_u} \bm u  + \sum_{\bm v \in \bm V} t^{\beta_v} \bm v  \, ,
\]
where $\bm Z= \sum_{u \in U} t^{\alpha_u}$ is such that $\val \bm Z=0$.
\end{proof}

The special case of Proposition~\ref{prop-direct} 
concerning ``tropical polytopes'' in the sense of~\cite{develin2004}
was already proved by Develin and Yu \cite[Proposition~2.1]{DevelinYu07}.
One can show that, conversely, each tropical polyhedron arises as the image under the valuation map of a polyhedron included in $\K_+^d$; see \cite[Proposition~2.6]{tropical+simplex}.

\subsection{Metric properties}

In this section, we establish various metric estimates which will be used in the analysis of the central path in Section~\ref{sec-cb}. These estimates involve different metrics. We start with the non-symmetric metric $\funk$, defined by
\[
\funk(x,y) \coloneqq \inf\bigl\{ \rho \geq 0 \colon x+\rho e \geq y \bigr\} \, ,
\]
where $x, y \in \Trop^d$. Recall that $e$ denotes the all-$1$-vector. Writing the inequality $x + \rho e \geq y$ as $\rho \tdot x \geq y$ reveals that $\funk$ is the tropical analogue of the \emph{Funk metric} which appears in Hilbert's geometry~\cite{PT14}. Equivalently, we can write 
\[
\funk(x,y) = \max(0,\max_k (y_k - x_k)) \, ,
\]
with the convention $-\infty + (+\infty) = -\infty$. In this way, we observe that $\funk$ is derived from the one-sided $\ell_\infty$-norm $\|\cdot\|$ which we used to define the wide neighborhood of the central path in~\eqref{eq:wide_neighborhood}, \ie, $\funk(x,y)=\|x-y\|$. We point out that  $\funk(x,y) < + \infty$ if and only if the support of $x$ contains the support of $y$, \ie, $\{ k \colon x_k \neq -\infty \} \supset \{ k \colon y_k \neq -\infty \}$.

The metric $d_\infty$ induced by the ordinary $\ell_\infty$-norm is obtained by symmetrizing $\funk$ as follows:
\[
d_\infty(x,y) \coloneqq \max(\funk(x,y),\funk(y,x)) \, .
\]
We shall consider another symmetrization of $\funk$, leading to the affine version of \emph{Hilbert's projective metric}:
\[
\hilbert(x,y) \coloneqq \funk(x,y) + \funk(y,x) \, . 
\]
The metric $\hilbert$ was shown in~\cite{cgq02} to be the canonical metric in tropical convexity.
For instance, the projection onto a convex set is well defined and is a best approximation in this metric.
The relevance of Hilbert's geometry to the study of the central path was already observed by Bayer and Lagarias~\cite{BayerLagarias89a}. Notice that $\hilbert(x,y) < + \infty$ if and only if the supports of the two vectors $x, y \in \Trop^d$ are identical. 

We extend our notation to sets as follows.
Given $X, Y \subset \Trop^d$ we define
\[ 
\hilbert(X,Y) := \sup_{x \in X} \inf_{y \in Y} \hilbert(x,y)
\quad \text{and} \quad 
d_\infty(X,Y) := \sup_{x \in X} \inf_{y \in Y} d_\infty(x,y)\, .
\]
These are the \emph{directed Hausdorff distances} from $X$ to $Y$ induced by $\hilbert$ and $d_\infty$, respectively.

In order to establish the metric properties of this section, we repeatedly use the following elementary inequalities: if $t > 0$ and $\gamma_1, \dots, \gamma_p \in \R_+$,
\begin{equation}
\max(\log_t \gamma_1, \dots, \log_t \gamma_p) \leq \log_t (\gamma_1 + \dots + \gamma_p) \leq \max(\log_t \gamma_1, \dots, \log_t \gamma_p) + \log_t p \, . \label{eq:maslov}
\end{equation}

We start with a metric estimate over classical and tropical segments.
\begin{lemma}\label{lemma:uniform_convergence_segment}
  Let $S = [u, v]$ be a segment in $\R^d$, and let $S^\trop$ be the tropical segment between the points $\log_t u$ and $\log_t v$.
  Then 
  \[
  d_\infty(S^\trop, \log_t S) \leq \log_t 2 \, ,
  \]
  Here and below $\log_t S$ is short for $\{\log_t s:s\in S\}$.
\end{lemma}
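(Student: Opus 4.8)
The plan is to compute both $S^\trop$ and $\log_t S$ explicitly as polygonal curves parametrized by the same scalar, and compare them coordinate by coordinate. Write $S = \{(1-\sigma) u + \sigma v : \sigma \in [0,1]\}$, so that for $s = (1-\sigma)u + \sigma v$ the $k$th coordinate of $\log_t s$ is $\log_t\bigl((1-\sigma) u_k + \sigma v_k\bigr)$. On the tropical side, $S^\trop = \tsegm(\log_t u, \log_t v)$ is the set of points $\lambda \tdot (\log_t u) \tplus \mu \tdot (\log_t v)$ with $\lambda \tplus \mu = 0$; its $k$th coordinate has the form $\max(\lambda + \log_t u_k, \mu + \log_t v_k)$. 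The natural thing is to match the tropical parameter to the classical one: given $\sigma \in [0,1]$, set $\lambda \coloneqq \log_t(1-\sigma)$ and $\mu \coloneqq \log_t \sigma$ (with the usual convention that $\log_t 0 = -\infty$), which indeed satisfies $\lambda \tplus \mu = \max(\log_t(1-\sigma), \log_t\sigma) \le 0$; to get an exact tropical point one renormalizes by subtracting $\max(\lambda,\mu)$, but this only shifts things by at most $\log_t 2$ and is harmless, so I would either absorb it or note that $\max(\log_t(1-\sigma),\log_t\sigma) \in [-\log_t 2, 0]$.

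**Coordinatewise comparison via \eqref{eq:maslov}.** With these choices, the $k$th coordinate of the classical point is $\log_t\bigl((1-\sigma) u_k + \sigma v_k\bigr)$ and the $k$th coordinate of the (unnormalized) tropical point is $\max\bigl(\log_t(1-\sigma) + \log_t u_k, \log_t \sigma + \log_t v_k\bigr) = \max\bigl(\log_t((1-\sigma)u_k), \log_t(\sigma v_k)\bigr)$. Applying the two-term inequality~\eqref{eq:maslov} with $\gamma_1 = (1-\sigma) u_k \ge 0$ and $\gamma_2 = \sigma v_k \ge 0$ gives
\[
\max\bigl(\log_t((1-\sigma)u_k), \log_t(\sigma v_k)\bigr) \le \log_t\bigl((1-\sigma)u_k + \sigma v_k\bigr) \le \max\bigl(\log_t((1-\sigma)u_k), \log_t(\sigma v_k)\bigr) + \log_t 2 \, ,
\]
so the two coordinates differ by at most $\log_t 2$ in absolute value. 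Taking the max over $k$ shows that for every classical point of $\log_t S$ there is a point of the unnormalized tropical curve within $d_\infty$-distance $\log_t 2$, and conversely, as $\sigma$ ranges over $[0,1]$ the tropical parameter $(\lambda,\mu)$ sweeps out all of $\tsegm(\log_t u, \log_t v)$ (every admissible pair $(\lambda,\mu)$ with $\lambda \tplus \mu = 0$ arises, up to the renormalization shift, from some $\sigma$, since $\sigma \mapsto \log_t \sigma - \log_t(1-\sigma)$ is a bijection from $(0,1)$ onto $\R$, with the endpoints giving the two vertices $\log_t u$ and $\log_t v$). Hence every tropical point is also within $\log_t 2$ of $\log_t S$, and both directed Hausdorff distances are bounded by $\log_t 2$, proving the claim. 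One must handle the degenerate coordinates where $u_k = 0$ or $v_k = 0$ (equivalently $\log_t u_k = -\infty$): there the convention $-\infty + (+\infty) = -\infty$ in the definition of $\funk$, together with the fact that the relevant terms drop out of both the max and the sum, makes the estimate go through unchanged.

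**Main obstacle.** The only genuine subtlety is bookkeeping around the normalization constraint $\lambda \tplus \mu = 0$ and the convention at $\sigma \in \{0,1\}$: the naive choice $\lambda = \log_t(1-\sigma)$, $\mu = \log_t\sigma$ fails the constraint by the quantity $\max(\log_t(1-\sigma),\log_t\sigma)$, which lies in $[-\log_t 2, 0]$ precisely because $(1-\sigma) + \sigma = 1$ forces $\max(1-\sigma,\sigma) \in [1/2, 1]$. Subtracting this quantity from both $\lambda$ and $\mu$ restores the constraint and translates the tropical point by a vector all of whose entries equal that same quantity, i.e.\ by at most $\log_t 2$ in $d_\infty$; folding this into the earlier $\log_t 2$ bound still gives $\log_t 2$ overall because the renormalization shift and the Maslov error act in the same direction and one can check they do not stack (the renormalized tropical coordinate is $\max(\log_t((1-\sigma)u_k),\log_t(\sigma v_k)) - \max(\log_t(1-\sigma),\log_t\sigma)$, which one bounds directly against $\log_t((1-\sigma)u_k+\sigma v_k) - \log_t\bigl(\max(1-\sigma,\sigma)\bigr)$ and then uses $\max(1-\sigma,\sigma)\ge 1/2$). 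I would present this cleanly by working with the renormalized expression from the start rather than correcting after the fact.
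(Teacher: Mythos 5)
Your argument is correct and follows essentially the same route as the paper's proof: both set up the correspondence between convex combinations $(1-\sigma)u+\sigma v$ and tropical combinations $\lambda\tdot\log_t u\tplus\mu\tdot\log_t v$, apply the inequality~\eqref{eq:maslov} coordinatewise, and check that the normalization making $\lambda\tplus\mu=0$ (a shift by at most $\log_t 2$, in the opposite direction to the Maslov error) does not degrade the bound. The paper simply runs the map in the direction tropical~$\to$~classical, which is all that the directed distance $d_\infty(S^\trop,\log_t S)$ requires, whereas you additionally establish the reverse estimate, which the paper only mentions in a subsequent remark.
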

\begin{proof}
Let $x \coloneqq \lambda \tdot (\log_t u) \tplus \mu \tdot (\log_t v)$ be a point of the tropical segment $S^\trop$, where $\lambda, \mu \in \Trop$ are such that $\lambda \tplus \mu = 0$. Now the point 
\[
x' \coloneqq \frac{t^\lambda u + t^\mu v}{t^\lambda + t^\mu}
\]
belongs to $S$. Using~\eqref{eq:maslov} and $\lambda \tplus \mu = 0$, we get $0 \leq \log_t (t^\lambda + t^\mu) \leq \log_t 2$. Similarly, for all $i \in [d]$, we have $x_i \leq \log_t(t^\lambda u_i + t^\mu v_i) \leq x_i + \log_t 2$. We deduce that $x_i - \log_t 2 \leq \log_t x'_i \leq x_i + \log_t 2$. In consequence, $d_\infty(x, \log_t x') \leq \log_t 2$. It follows that $d_\infty(S^\trop, \log_t S) \leq \log_t 2$.
\end{proof}

We now focus on estimating the distance between tropical polyhedra and related logarithmic deformations of convex polyhedra.  To this end, we consider a Puiseux polyhedron $\bm \Pcal$ included in the non-negative orthant, as well as the associated parametric family of polyhedra $\bm \Pcal(t)$ over~$\R$. The following theorem shows that the tropical polyhedron $\val(\bm \Pcal)$ is the log-limit of the polyhedra $\bm \Pcal(t)$, and that the convergence is uniform. This is related to a result of Briec and Horvath, who established in~\cite{BriecHorvath04} a uniform convergence property for a parametric family of convex hulls.
\begin{theorem}
\label{th:uniform_convergence_polyhedra}
Let $\bm \Pcal \subset \K_+^d$ be a Puiseux polyhedron.
Then the sequence $(\log_t \bm \Pcal(t))_t$ of real polyhedra converges to the tropical polyhedron $\val(\bm \Pcal)$ with respect to the directed Hausdorff distance $\hilbert$.   
\end{theorem}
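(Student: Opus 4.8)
The plan is to reduce the statement to the Minkowski--Weyl description of $\bm\Pcal$ and then to control $\log_t$ of a sum of non-negative reals by a maximum through~\eqref{eq:maslov}. Since $\K$ is real closed, $\bm\Pcal$ is the Minkowski sum of the convex hull of finitely many vectors $\bm u^1,\dots,\bm u^q$ and the convex cone generated by finitely many vectors $\bm v^1,\dots,\bm v^r$; as $\bm\Pcal\subset\K_+^d$, all of these lie in $\K_+^d$. By Proposition~\ref{prop-bygenerators}, for $t$ large enough $\bm\Pcal(t)$ is the Minkowski sum of $\operatorname{conv}(\bm u^1(t),\dots,\bm u^q(t))$ and $\operatorname{cone}(\bm v^1(t),\dots,\bm v^r(t))$, while by Proposition~\ref{prop-direct} the tropical polyhedron $\val(\bm\Pcal)$ is generated (tropically) by $U=\{u^i\coloneqq\val\bm u^i\}$ and $V=\{v^j\coloneqq\val\bm v^j\}$. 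As a preliminary observation I would record that there is a function $\varepsilon(t)\to 0$, depending only on the finitely many series $\bm u^i_k,\bm v^j_k$, with $|\log_t\bm u^i(t)_k-u^i_k|\le\varepsilon(t)$ and $|\log_t\bm v^j(t)_k-v^j_k|\le\varepsilon(t)$ whenever the corresponding tropical value is finite, and with both sides equal to $-\infty$ otherwise; this follows by factoring the leading monomial out of each series.

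I would then show that both directed Hausdorff distances $\hilbert(\log_t\bm\Pcal(t),\val\bm\Pcal)$ and $\hilbert(\val\bm\Pcal,\log_t\bm\Pcal(t))$ are at most $\log_t(q+r)+\log_t q+\varepsilon(t)$ up to a factor $2$, which tends to $0$ as $t\to+\infty$ (only the first is needed, but both come out of the same computation). For the inclusion of $\val\bm\Pcal$ into a neighbourhood of $\log_t\bm\Pcal(t)$, take $x=(\tsum_i\alpha_i\tdot u^i)\tplus(\tsum_j\beta_j\tdot v^j)\in\val\bm\Pcal$ with $\tsum_i\alpha_i=0$, and consider the preimage $x'(t)\coloneqq\frac1Z\sum_i t^{\alpha_i}\bm u^i(t)+\sum_j t^{\beta_j}\bm v^j(t)\in\bm\Pcal(t)$ used in the proof of Proposition~\ref{prop-direct}, where $Z=\sum_i t^{\alpha_i}$ satisfies $0\le\log_t Z\le\log_t q$. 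Applying~\eqref{eq:maslov} coordinatewise, then replacing each $\log_t\bm u^i(t)_k$, $\log_t\bm v^j(t)_k$ by $u^i_k$, $v^j_k$ (cost $\varepsilon(t)$ in the max) and subtracting $\log_t Z$, yields $|\log_t x'(t)_k-x_k|\le\log_t(q+r)+\log_t q+\varepsilon(t)$ for every $k$ — a bound not involving the possibly unbounded scalars $\alpha_i,\beta_j$. Moreover $x$ and $\log_t x'(t)$ have the same support (the zero pattern is governed by the same conditions on the $u^i_k,v^j_k$), so the bound transfers to $d_\infty$ and hence, via $\hilbert\le 2d_\infty$, to $\hilbert$; taking the supremum over $x$ gives this half. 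For the inclusion of $\log_t\bm\Pcal(t)$ into a neighbourhood of $\val\bm\Pcal$, write $x\in\bm\Pcal(t)$ as $\sum_i a_i\bm u^i(t)+\sum_j b_j\bm v^j(t)$ with $a_i\ge 0$, $\sum_i a_i=1$, $b_j\ge 0$, set $\alpha_i=\log_t a_i$, $\beta_j=\log_t b_j$, and note $-\log_t q\le\tsum_i\alpha_i\le0$. The same two estimates place $\log_t x$ within $\log_t(q+r)+\varepsilon(t)$ of the tropical point $(\tsum_i\alpha_i\tdot u^i)\tplus(\tsum_j\beta_j\tdot v^j)$; re-normalising by $\gamma\coloneqq\tsum_i\alpha_i$ translates that point by $\gamma e$, which costs $|\gamma|\le\log_t q$ and produces an honest element of $\val\bm\Pcal$, again with matching support.

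The genuinely delicate point — which is exactly why I single out $\varepsilon(t)$ and the two-sided bound on $\tsum_i\alpha_i$ — is \emph{uniformity}: the error must be independent of the chosen point of the (possibly unbounded) sets $\val\bm\Pcal$ and $\bm\Pcal(t)$, in particular of the magnitudes of the scalars $\alpha_i,\beta_j$ or $a_i,b_j$. Once these scalars cancel out in the estimates, what remains is bookkeeping with~\eqref{eq:maslov}, the elementary inequalities $\funk\le d_\infty$ and $\hilbert\le 2d_\infty$, and the fact that the correspondence $x\leftrightarrow x'(t)$ preserves the $-\infty$ pattern, so $\hilbert$ never becomes infinite. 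A minor point to flag is that Proposition~\ref{prop-bygenerators} is only valid for $t$ large, so all statements are read ``for $t$ large enough'', in accordance with the limit $t\to+\infty$.
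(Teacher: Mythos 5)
Your argument is correct and follows essentially the same route as the paper's proof: decomposition into generators via Proposition~\ref{prop-bygenerators}, uniform control of $\log_t$ of each generator, the sandwich~\eqref{eq:maslov} to handle combinations so that the (possibly unbounded) scalars cancel, renormalisation of the convex coefficients by their maximum, and a support check to keep $\hilbert$ finite. The only substantive difference is that the paper first applies Carath\'eodory's theorem to reduce each point to a combination of at most $d+1$ generators, replacing your $\log_t(q+r)$ and $\log_t q$ by $\log_t(d+1)$ --- immaterial for the convergence statement, though it is the form of the estimate that gets reused in Theorem~\ref{th:polyhedron_metric_estimate}.
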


\begin{proof}
By Proposition~\ref{prop-bygenerators}, we can find finite
sets $\bm U ,\bm V \subset \K^d_+$ 
such that for sufficiently large $t$, the real polyhedron $\bm \Pcal(t)$ is generated by the sets of points $\bm U(t) \coloneqq \{ \bm u(t) \colon \bm u \in \bm U\} \subset \R^d_+$ and rays $\bm V(t) \coloneqq \{ \bm v(t) \colon \bm v \in \bm V\} \subset \R^d_+$. 
Let $\bm u \in \bm U$. If $t$ is large enough, then $\bm u_i(t) = 0$ is equivalent to $\val(\bm u)_i = -\infty$, 
for all $i \in [d]$. Thus $\funk(\log_t \bm u(t), \val \bm u)$ as well as $\funk(\val \bm u, \log_t \bm u(t))$ converge to $0$ when $t \to +\infty$. The situation is similar for~$\bm v(t)$ and $\val \bm v$, for any $\bm v \in \bm V$.

Moreover, the tropical polyhedron $\Pcal \coloneqq \val(\bm \Pcal)$ is generated by the sets $\val(\bm U)$ and $\val(\bm V)$, as shown in the proof of Proposition~\ref{prop-direct}.

Now consider $x \in \bm \Pcal(t)$. From Carathéodory's Theorem 
we know that there exist subsets $\{\bm u^k(t)\}_{k \in K} \subset \bm U(t)$ and $\{\bm v^\ell(t)\}_{\ell \in L} \subset \bm V(t)$ with 
$|K| +|L|\leq d+1$ such that the point $x$ can be written as
\[
x = \sum_{k \in K} \alpha_k \bm u^k(t) + \sum_{\ell \in L} \beta_\ell \bm v^\ell(t) \, ,
\]
where $\alpha_k, \beta_\ell > 0$ for $k \in K$, $\ell \in L$ and $\sum_{k \in K} \alpha_k = 1$. Then, for all $i \in [d]$, we get
\begin{multline}
\tsum_{k \in K} \bigl((\log_t \alpha_k) \tdot \log_t \bm u^k(t)\bigr) \tplus \tsum_{\ell \in L} \bigl((\log_t \beta_\ell) \tdot \log_t \bm v^\ell(t)\bigr) \leq \log_t x_i \\
\leq \biggl[\tsum_{k \in K} \bigl((\log_t \alpha_k) \tdot \log_t \bm u^k(t)\bigr) \tplus \tsum_{\ell \in L} \bigl((\log_t \beta_\ell) \tdot \log_t \bm v^\ell(t)\bigr)\biggr] + \log_t (|K| + |L|) \, .
\label{eq:delta_proof_eq1}
\end{multline}
Setting $\gamma \coloneqq \max_{k \in K} \alpha_k$, we have $\frac{1}{|K|} \leq \gamma \leq 1$. Now we define 
\[
x' \coloneqq \bigl(\tsum_{k \in K} \alpha'_k \tdot u^k\bigr) \tplus \bigl(\tsum_{\ell \in L} \beta'_\ell \tdot v^\ell\bigr) \, ,
\]
where $\alpha'_k \coloneqq \log_t (\alpha_k/\gamma)$, $\beta'_\ell \coloneqq \log_t \beta_\ell$, $u^k \coloneqq \val(\bm u^k)$ and $v^\ell \coloneqq \val(\bm v^\ell)$. By choice of $\gamma$ we have $\tsum_{k \in K} \alpha'_k = 0$ and thus $x' \in \Pcal$. Further, $x_i > 0$ if and only if there exists $k \in K$ such that $\bm u^k_i(t) > 0$ or $\ell \in L$ such that $\bm v^\ell_i(t) > 0$. Provided that $t$ is sufficiently large, this is equivalent to the fact  that $u^k_i > -\infty$ for some $k \in K$, or $v^\ell_i > -\infty$ for some $\ell \in L$. This latter property amounts to $x'_i > -\infty$. Consequently, we have $\hilbert(\log_t x, x') < +\infty$, and we can derive from~\eqref{eq:delta_proof_eq1} that
\begin{multline}
x'_i - \max\bigl(\log_t |K|+ \max_{k \in K} \funk(\log_t \bm u^k(t), u^k), \max_{\ell \in L} \funk(\log_t \bm v^\ell(t), v^\ell)\bigr) \leq \log_t x_i \\
 \leq x'_i + \log_t (|K| + |L|) + \max\bigl(\max_{k \in K} \funk(u^k, \log_t \bm u^k(t)), \max_{\ell \in L} \funk(v^\ell, \log_t \bm v^\ell(t))\bigr) \, ,\label{eq:uniform_convergence_polyhedra1}
\end{multline}
for all $i \in [d]$. Finally, we deduce that 
\begin{multline}
 \hilbert(\log_t \bm \Pcal(t), \Pcal) \leq \log_t (d+1) + \max\bigl(\max_{\bm u \in \bm U} \funk(\val \bm u, \log_t \bm u(t)), \max_{\bm v \in \bm V} \funk(\val \bm v, \log_t \bm v(t))\bigr) \\
+ \max\bigl(\log_t (d+1) + \max_{\bm u \in \bm U} \funk(\log_t \bm u(t), \val \bm u), \max_{\bm v \in \bm V} \funk(\log_t \bm v(t), \val \bm v)\bigr) \, , \label{eq:uniform_convergence_polyhedra2}
\end{multline}
which tends to $0$ when $t \to +\infty$. 
\end{proof}
\begin{remark}
For the sake of brevity, we only stated and proved here
the one sided metric estimates which we will use in the proof
of our main results, leaving it to the interested reader to derive
the symmetrical metric estimates. 
For instance, the inequality $d_\infty(\log_t S, S^\trop)
\leq \log_t 2$ can be shown by a method similar to the one of Lemma~\ref{lemma:uniform_convergence_segment}. Similarly, a variant of the proof of Theorem~\ref{th:uniform_convergence_polyhedra} shows that 
the reversed Hausdorff distance
$\hilbert(\val(\bm \Pcal),\log_t \bm \Pcal(t)) $ tends to zero as well
as $t\to\infty$.
\end{remark}

Next we refine the convergence result just obtained by providing a metric estimate in the special case where $\bm \Pcal$ is a polyhedron given by constraints with monomial coefficients. Here a Puiseux series of the form $\pm t^{\alpha}$ is called \emph{monomial}, with the convention $t^{-\infty} = 0$. Further, a vector or a matrix is \emph{monomial} if all its entries are. 
For a matrix $\bm M$ of size $d \times d$ we introduce the quantity $\eta(\bm M) > 0$ by letting
\[
\eta(\bm M) \coloneqq \min \Bigl\{\eta \colon \sigma, \tau \in \Sym{d}, \, \eta = \sum_{i = 1}^d \alpha_{i\sigma(i)} -  \sum_{i = 1}^d \alpha_{i\tau(i)} > 0 \Bigr\} \, ,
\]
where $\Sym{d}$ stands for the symmetric group over $[d]$. We use the convention $\min \emptyset = +\infty$. Phrased differently, the determinant of $\bm M$ is a Puiseux series with finitely many terms with decreasing exponents, and $\eta(\bm M)$ provides a lower bound on the gap between any two successive exponents (if any). This allows us to obtain explicit upper and lower bounds for $\log_t \abs{\det \bm M(t)}$ in terms of $\val(\det \bm M)$. Note that these bounds hold without any assumption on the genericity of the matrix $\bm M$.
\begin{lemma}\label{lemma:det}
Let $\bm M \in \K^{d \times d}$ be a monomial matrix. Then, for all $t > 0$, we have
\begin{align*}
\log_t \abs{\det \bm M(t)} & \leq \val(\det \bm M) + \log_t d! \, ,
\shortintertext{and, if even $t \geq (d!)^{1/\eta(\bm M)}$, then we get}
\val(\det \bm M) & \leq  \log_t \abs{\det \bm M(t)} + \log_t d!  \, .
\end{align*}
\end{lemma}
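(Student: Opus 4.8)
The plan is to expand the determinant by the Leibniz formula and track the valuations of the monomials. Write $\det \bm M(t) = \sum_{\sigma \in \Sym{d}} \sign(\sigma) \prod_{i=1}^d \bm M_{i\sigma(i)}(t)$. Since each entry is monomial, $\bm M_{i\sigma(i)}(t) = \epsilon_{i\sigma(i)} t^{\alpha_{i\sigma(i)}}$ with $\epsilon_{i\sigma(i)} \in \{0, \pm 1\}$ and $\alpha_{i\sigma(i)} \in \R \cup \{-\infty\}$, so each product term is $\pm t^{\gamma_\sigma}$ where $\gamma_\sigma = \sum_{i=1}^d \alpha_{i\sigma(i)}$ (with the convention that $\gamma_\sigma = -\infty$ as soon as one factor vanishes). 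Hence $\abs{\det \bm M(t)} \leq \sum_\sigma t^{\gamma_\sigma} \leq d! \, t^{\gamma_{\max}}$ where $\gamma_{\max} = \max_\sigma \gamma_\sigma = \val(\det \bm M)$; taking $\log_t$ yields the upper bound $\log_t \abs{\det \bm M(t)} \leq \val(\det \bm M) + \log_t d!$, using~\eqref{eq:maslov}. This direction needs no hypothesis on $t$.

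For the lower bound I would separate the leading term from the rest. Let $\Sigma^{+} = \{\sigma : \gamma_\sigma = \gamma_{\max}\}$ be the permutations achieving the top exponent; the corresponding coefficient of $t^{\gamma_{\max}}$ in $\det \bm M(t)$ is $\kappa := \sum_{\sigma \in \Sigma^+} \sign(\sigma) \epsilon_{1\sigma(1)} \cdots \epsilon_{d\sigma(d)}$. The subtlety is that $\kappa$ could a priori be $0$ due to cancellation — but it cannot, because by definition $\val(\det \bm M) = \gamma_{\max}$ means precisely that $t^{\gamma_{\max}}$ is the leading monomial of the Puiseux series $\det \bm M$, so its coefficient is nonzero; being an integer, $\abs{\kappa} \geq 1$. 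Now all remaining permutations $\sigma \notin \Sigma^+$ satisfy $\gamma_\sigma \leq \gamma_{\max} - \eta(\bm M)$ by the definition of $\eta(\bm M)$ (the gap between any two distinct exponents appearing among the $\gamma_\sigma$ is at least $\eta(\bm M)$). Therefore
\[
\abs{\det \bm M(t)} \geq \abs{\kappa} \, t^{\gamma_{\max}} - \sum_{\sigma \notin \Sigma^+} t^{\gamma_\sigma} \geq t^{\gamma_{\max}} - d! \, t^{\gamma_{\max} - \eta(\bm M)} = t^{\gamma_{\max}}\bigl(1 - d! \, t^{-\eta(\bm M)}\bigr) \, .
\]
When $t \geq (d!)^{1/\eta(\bm M)}$ we have $d! \, t^{-\eta(\bm M)} \leq 1$, and a slightly more careful bookkeeping is needed here: to land exactly at $\val(\det \bm M) \leq \log_t \abs{\det \bm M(t)} + \log_t d!$, I would instead group by exponent values. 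Writing the distinct exponents among $\{\gamma_\sigma\}$ as $\gamma_{\max} = \delta_0 > \delta_1 > \cdots > \delta_m$ with $\delta_0 - \delta_j \geq j\,\eta(\bm M)$, and letting $c_j \in \Z$ be the coefficient of $t^{\delta_j}$, we have $c_0 \neq 0$ and $\abs{c_j} \leq d!$, so $\abs{\det \bm M(t)} \geq t^{\delta_0} - d!\sum_{j\geq 1} t^{\delta_j} \geq t^{\delta_0} - d!\sum_{j \geq 1} t^{\delta_0 - j\eta}$, and summing the geometric series under $t^{\eta} \geq d!$ gives $\abs{\det\bm M(t)} \geq t^{\delta_0}(1 - \tfrac{d!\,t^{-\eta}}{1 - t^{-\eta}})$; with $t^\eta \ge d!$ this is at least $t^{\delta_0}/d!$ after an elementary estimate, whence $\log_t\abs{\det\bm M(t)} \geq \delta_0 - \log_t d! = \val(\det \bm M) - \log_t d!$, which rearranges to the claim.

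The main obstacle I anticipate is purely bookkeeping: making the geometric-series estimate clean enough to produce the crisp constant $\log_t d!$ rather than some messier quantity, and handling the degenerate cases correctly (when $\det \bm M = 0$, so $\val(\det \bm M) = -\infty$ and both inequalities hold trivially; and when $\eta(\bm M) = +\infty$, which happens exactly when all nonzero $\gamma_\sigma$ are equal, so there is no lower-order term and $\abs{\det \bm M(t)} = \abs{c_0} t^{\gamma_{\max}} \geq t^{\gamma_{\max}}$, giving the lower bound with no hypothesis on $t$). None of this is deep, but the constants must be threaded carefully so that the threshold $(d!)^{1/\eta(\bm M)}$ is exactly what makes the argument go through. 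The key conceptual point — that the leading coefficient $c_0$ is a nonzero integer, hence $\abs{c_0} \geq 1$, by the very definition of the valuation — is what prevents any cancellation from spoiling the lower bound, and it is worth stating explicitly.
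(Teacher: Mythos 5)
Your overall strategy is the same as the paper's: expand $\det\bm M$ as a sum of monomials via the Leibniz formula, observe that the leading coefficient is a nonzero integer (hence of absolute value at least $1$), and use the exponent gap $\eta(\bm M)$ to control the lower-order terms. The upper bound is fine. But the lower-bound bookkeeping you propose does not close. With the per-exponent bound $\abs{c_j}\leq d!$ and the geometric series, you arrive at
\[
\abs{\det\bm M(t)} \;\geq\; t^{\delta_0}\Bigl(1-\frac{d!\,t^{-\eta}}{1-t^{-\eta}}\Bigr),
\]
and at the stated threshold $t^{\eta}=d!$ the parenthesis equals $1-\frac{1}{1-1/d!}=-\frac{1}{d!-1}<0$, so this quantity is not ``at least $t^{\delta_0}/d!$ after an elementary estimate''--- it is negative. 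The geometric-series route with coefficient bound $d!$ per exponent class genuinely fails at the threshold $(d!)^{1/\eta(\bm M)}$.

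The missing observation is that the coefficients satisfy a \emph{global} bound $\sum_j\abs{c_j}\leq d!$, because each of the $d!$ permutations contributes $\pm1$ (or $0$) to exactly one exponent class; since the leading class absorbs at least one permutation, $\sum_{j\geq 1}\abs{c_j}\leq d!-1$. Then no geometric series is needed: every non-leading exponent is at most $\delta_0-\eta(\bm M)$, so for $t\geq (d!)^{1/\eta(\bm M)}$ the tail is at most $(d!-1)\,t^{-\eta(\bm M)}\leq 1-1/d!$, whence $\abs{\det\bm M(t)}\geq t^{\delta_0}\bigl(1-(1-1/d!)\bigr)=t^{\delta_0}/d!$, which is exactly the claimed inequality. (Your very first displayed estimate was one step away from this: replacing the crude count $d!$ of non-leading permutations by the correct $d!-1$ already finishes the proof.) This is precisely how the paper argues.
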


\begin{proof}
First note that the statement is trivial when $\det \bm M = 0$, since in this case, the determinant of $\bm M(t)$ vanishes for all $t > 0$. 
Now suppose that $\det \bm M \neq 0$.
Since $\bm M$ is monomial, every entry of $\bm M$ is of the form $\epsilon_{ij} t^{\alpha_{ij}}$ where $\epsilon_{ij} \in \{\pm 1\}$.
Therefore, we obtain
\[
\det \bm M = \sum_{k = 1}^p c_k t^{\beta_k} \, ,
\]
where the following conditions are met:
\begin{inparaenum}[(i)]
\item each $c_k$ is a non-null integer, and $\sum_k \abs{c_k} \leq d!$,
\item every $\beta_k$ is of the form $\sum_{i = 1}^d \alpha_{i\sigma(i)}$ for a certain permutation $\sigma \in \Sym{d}$,
\item $\beta_1 > \dots > \beta_p > -\infty$.
\end{inparaenum}
With this notation, we have $\val(\det \bm M) = \beta_1$, and $\beta_i - \beta_{i+1} \geq \eta(\bm M)$. Similarly, for all $t > 0$, we have $\det \bm M(t) = \sum_{k = 1}^p c_k t^{\beta_k}$. This leads to
\[
\log_t \abs{\det \bm M(t)} \leq \beta_1 + \log_t \Bigl(\sum_{k = 1}^p \abs{c_k} t^{\beta_k - \beta_1} \Bigr) \leq \val(\det \bm M) + \log_t d! \, .
\]
Further, provided that $t \geq (d!)^{1/\eta(\bm M)}$, we have
\[
\sum_{k = 2}^p \abs{c_k} t^{\beta_k - \beta_1} \leq (d! - 1) t^{\beta_2 - \beta_1} \leq (d! - 1) t^{-\eta(\bm M)} \leq 1 - 1/d! \, ,
\]
and so
\[
\log_t \abs{\det \bm M(t)} \geq \beta_1 + \log_t \Bigl(1 - \sum_{k = 2}^p \abs{c_k} t^{\beta_k - \beta_1} \Bigr) \geq \val(\det \bm M) - \log_t d! \, . \qedhere
\]
\end{proof}

Recall that we write $e$ for the all-$1$-vector of an appropriate size.
\begin{theorem}\label{th:polyhedron_metric_estimate}
Let $\bm \Pcal \subset \K_+^d$ be a polyhedron of the form $\{ \bm x \in \K^d \colon \bm A \bm x \leq \bm b \}$ where $\bm A$ and $\bm b$ are monomial. Let $\eta_0$ be the minimum of the quantities $\eta(\bm M)$ where $\bm M$ is a square submatrix of $\small\begin{pmatrix} \bm A & \bm b & 0 \\ \transpose{e} & 0 & 1 \end{pmatrix}$ of order $d$. Then, for all $t \geq (d!)^{1/\eta_0}$, we have:
\[
\hilbert(\log_t \bm \Pcal(t), \val (\bm \Pcal)) \leq \log_t \bigl((d+1)^2 (d!)^4\bigr) \, .
\]
\end{theorem}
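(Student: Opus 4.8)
The plan is to sharpen the proof of Theorem~\ref{th:uniform_convergence_polyhedra} by tracking the constants quantitatively, using Lemma~\ref{lemma:det} to control the vertices and rays of $\bm\Pcal(t)$ explicitly rather than only in the limit. First I would invoke Proposition~\ref{prop-bygenerators}: since $\bm\Pcal$ is monomial, its vertices are solutions of $d\times d$ subsystems of $\bm A\bm x=\bm b$ and its extreme rays are solutions of the corresponding homogeneous subsystems. By Cramer's rule each vertex coordinate is a quotient of two $d$-minors of the augmented matrix $\begin{psmallmatrix}\bm A&\bm b\\ \transpose{e}&1\end{psmallmatrix}$-type matrices; applying Lemma~\ref{lemma:det} to numerator and denominator (both are $d$-minors of the displayed matrix, so both have $\eta\ge\eta_0$, and $t\ge(d!)^{1/\eta_0}$ is exactly the hypothesis needed), one gets that for every vertex or ray generator $\bm u$ and every coordinate $i$,
\[
\val(\bm u)_i - \log_t\bigl((d!)^2\bigr)\ \le\ \log_t\abs{\bm u_i(t)}\ \le\ \val(\bm u)_i + \log_t\bigl((d!)^2\bigr)\,,
\]
\ie\ $\funk(\log_t\bm u(t),\val\bm u)\le 2\log_t(d!)$ and $\funk(\val\bm u,\log_t\bm u(t))\le 2\log_t(d!)$, and likewise for the ray generators; one must also check that the sign of $\bm u_i(t)$ is nonnegative (as $\bm\Pcal\subset\K_+^d$) and that $\bm u_i(t)=0$ iff $\val(\bm u)_i=-\infty$, so the supports match and the Hilbert distance is finite.

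Next I would feed these uniform bounds into the Carathéodory argument already used in the proof of Theorem~\ref{th:uniform_convergence_polyhedra}: an arbitrary $x\in\bm\Pcal(t)$ is a convex-plus-conic combination of at most $d+1$ of the generators $\bm u^k(t),\bm v^\ell(t)$, and inequality~\eqref{eq:uniform_convergence_polyhedra1} (specialized with $|K|+|L|\le d+1$) together with the per-generator estimate above yields, for the point $x'\in\Pcal=\val(\bm\Pcal)$ constructed there,
\[
\hilbert(\log_t x, x')\ \le\ 2\log_t(d+1) + 4\log_t(d!)\,.
\]
Taking the supremum over $x\in\bm\Pcal(t)$ gives $\hilbert(\log_t\bm\Pcal(t),\val\bm\Pcal)\le\log_t\bigl((d+1)^2(d!)^4\bigr)$, which is the claimed bound. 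The only points requiring care are bookkeeping ones: that the two $\log_t(d+1)$ terms in~\eqref{eq:uniform_convergence_polyhedra1}--\eqref{eq:uniform_convergence_polyhedra2} combine to $2\log_t(d+1)$ and the four $\funk$-terms (two on each side) combine to $4\log_t(d!)$, and that the normalization constant $\gamma=\max_{k\in K}\alpha_k$ used to renormalize the tropical barycentric coefficients does not introduce extra error beyond what is already absorbed in~\eqref{eq:uniform_convergence_polyhedra1}.

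The main obstacle I anticipate is not the Carathéodory step (which is essentially copied from the earlier proof) but the correct identification of the matrix whose minors control the generators: one has to argue that \emph{every} vertex of $\bm\Pcal(t)$ and every extreme ray arises via Cramer's rule from a $d\times d$ submatrix of precisely the augmented matrix $\begin{psmallmatrix}\bm A&\bm b&0\\ \transpose{e}&0&1\end{psmallmatrix}$ appearing in the statement — the extra row $(\transpose{e},0,1)$ and column $(0,1)$ are there to handle the homogenization needed so that ray generators (normalized, say, by $\transpose{e}\bm v=1$) are also quotients of minors of this single matrix, so that one uniform $\eta_0$ suffices. Once that reduction is in place, the rest is the quantitative substitution described above.
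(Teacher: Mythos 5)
Your proposal is correct and follows essentially the same route as the paper: take the generators to be the vertices of $\bm\Pcal$ and the normalized extreme rays (realized as vertices of $\{\bm A\bm z\le 0,\ \transpose{e}\bm z=1\}$, which is exactly why the augmented matrix in the statement carries the extra row and column), bound each generator coordinate via Cramer's rule and Lemma~\ref{lemma:det} by $2\log_t d!$ in both Funk directions, and substitute into~\eqref{eq:uniform_convergence_polyhedra2} to obtain $2\log_t(d+1)+4\log_t d!=\log_t\bigl((d+1)^2(d!)^4\bigr)$. The constants and the threshold $t\ge(d!)^{1/\eta_0}$ check out exactly as in the paper's argument.
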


\begin{proof}
We employ the notation introduced in the proof of Theorem~\ref{th:uniform_convergence_polyhedra}. Note that the inequality given in~\eqref{eq:uniform_convergence_polyhedra2} holds for any sets $\bm U, \bm V$ generating the polyhedron $\bm \Pcal$. In particular, we can set $\bm U$ to the set of vertices of $\bm \Pcal$, and $\bm V$ to a set consisting of precisely one representative of every extreme ray of the recession cone of $\bm \Pcal$.

Let $\bm u \in \bm U$. Since $\bm u$ is a vertex, there exists a subset $I$ of cardinality $d$ such that $\bm A_I \bm u = \bm b_I$, where $\bm A_I$ and $\bm b_I$ consist of the rows of $\bm A$ and $\bm b$, respectively, which are indexed by $i \in I$, and $\bm A_I$ is invertible. Therefore, by Cramer's rule, every coordinate $\bm u_i$ can be expressed as a fraction of the form $\pm \det \bm M / \det \bm A_I$ where $\bm M$ is a submatrix of $\begin{pmatrix} \bm A & \bm b \end{pmatrix}$ of size $d \times d$. Recall that $\bm u_i$ is non-negative, and hence $\bm u_i = \abs{\det \bm M} / \abs{\det \bm A_I}$. By definition, $\eta(\bm M)$ and $\eta(\bm A_I)$ are greater than or equal to $\eta_0$. From Lemma~\ref{lemma:det} we derive that
\[
\val(\bm u_i) - 2 \log_t d! \leq \log_t \bm u_i(t) \leq \val(\bm u_i) + 2 \log_t d! \, ,
\]
for all $t \geq (d!)^{1/\eta_0}$.
Since these inequalities hold for all $i \in [d]$, we deduce that the two quantities $\funk(\val \bm u, \log_t \bm u(t))$ and $\funk(\log_t \bm u(t), \val \bm u)$ differ by at most $2 \log_t d!$. 

The recession cone of $\bm \Pcal$ is the set $\{ \bm z \in \K^d \colon \bm A \bm z \leq 0 \}$. Since $\bm \Pcal$ is contained in the positive orthant, so does the recession cone. Therefore, without loss of generality, we can assume that every $\bm v \in \bm V$ satisfies $\sum_i \bm v_i = 1$. In this way, the elements of $\bm V$ precisely correspond to the vertices of the polyhedron $\{ \bm z \in \K^d \colon \bm A \bm z \leq 0 \, , \transpose{e} \bm z = 1 \}$. Using the same arguments as above, we infer that, for all $\bm v \in \bm V$, the two values $\funk(\val \bm v, \log_t \bm v(t))$ and $\funk(\log_t \bm v(t), \val \bm v)$ differ by at most $2 \log_t d!$, as soon as $t \geq (d!)^{1/\eta_0}$.
Now the claim follows from~\eqref{eq:uniform_convergence_polyhedra2}.
\end{proof}

\section{The Tropical Central Path}
\label{sec:tropical_central_path}
\noindent
Our core idea is to introduce the \emph{tropical central path} of a linear program over Puiseux series.
This is defined as the image of the classical primal-dual central path under the valuation map.
By (\ref{eq:centralpath}) the classical central path is a segment of a real algebraic curve, and so its tropicalization is a segment of a (real) tropical curve and thus piecewise linear.
It will turn out that for certain Puiseux linear programs the tropical central path carries a substantial amount of metric information.
In Sections~\ref{sec:curvature} and~\ref{sec-cb} below we will see that this applies to the linear programs $\realCEX{r}{t}$.
As its key advantage the tropical central path turns out to be much easier to analyze than its classical counterpart.

\subsection{A geometric characterization of the tropical central path}\label{subsec:geometric_characterization}

As in Section~\ref{sec:classical_central_path}, we consider a dual pair of linear programs, except that now the coefficents lie in the field $\K$ of absolutely convergent real Puiseux series from Section~\ref{subsec:fields}:
\begin{alignat*}{6}\label{LP:puiseux}
& \text{minimize} & \quad \scalar{\bm c}{\bm x} & \quad \text{subject to} & \quad \bm A \bm x + \bm w & = \bm b \, , & \;  (\bm x, \bm w) & \in \K_+^N \, , \tag*{$\textbf{LP}(\bm A, \bm b, \bm c)$} \\
& \text{maximize} & \quad \scalar{\bm b}{\bm y} & \quad \text{subject to} & \quad \bm s - \transpose{\bm A} \bm y & = \bm c \, , & \;  (\bm s, \bm y) & \in \K_+^N \, . \tag*{$\textbf{DualLP}(\bm A, \bm b, \bm c)$}
\end{alignat*}
where $\bm A \in \K^{m \times n}$, $\bm b \in \K^m$, $\bm c \in \K^n$ and $N \coloneqq n+m$. Here, the Euclidean scalar product $\langle \cdot, \cdot \rangle$ is extended by setting $\langle \bm u, \bm v \rangle \coloneqq \sum_i \bm u_i \bm v_i$ for any vectors $\bm u, \bm v$ with entries over $\K$. Further, we define
\[
\bm \Pcal \coloneqq \{ (\bm x, \bm w) \in \K^N_+ \colon \bm A \bm x + \bm w = \bm b\} \quad \text{and} \quad \bm \Qcal \coloneqq \{(\bm s, \bm y) \in \K^N_+ \colon  \bm s - \transpose{\bm A} \bm y = \bm c \} \, ,
\]
which correspond to the feasible regions of the two Puiseux linear programs.
Moreover, we let $\bm \Fcal \coloneqq \bm \Pcal \times \bm \Qcal$ be the set of primal-dual feasible points, and $\bm \Fcal^\circ \coloneqq \{\bm z \in \bm \Fcal \colon \bm z > 0 \}$ is the strictly feasible subset. Throughout we will make the following assumption:
\begin{assumption}\label{assump:strictly_feasible}
The set $\bm \Fcal^\circ$ is non-empty.
\end{assumption}
This allows us to define the central path of the Puiseux linear programs $\textbf{LP}(\bm A, \bm b, \bm c)$ and $\textbf{DualLP}(\bm A, \bm b, \bm c)$. Indeed, applying Tarski's principle to the real-closed field $\K$ shows that, for all $\bm \mu \in \K$ such that $\bm \mu > 0$, the system
\begin{equation} \label{eq:puiseux_central_path}
\begin{aligned}
\bm A \bm x + \bm w & = \bm b \\
\bm s-\transpose{\bm A} \bm y & = \bm c \\
\begin{pmatrix}
\bm x \bm s \\
\bm w \bm y    
\end{pmatrix}
& = \bm \mu e \\
\bm x, \bm w, \bm y, \bm s & > 0
\end{aligned} 
\end{equation}
has a unique solution in $\K^{2N}$. We denote this solution as $\bm \Ccal(\bm \mu) = (\bm x^{\bm \mu}, \bm w^{\bm \mu}, \bm s^{\bm \mu}, \bm y^{\bm \mu})$ and refer to it as \emph{point of the central path with parameter} $\bm \mu$. Similarly, by Tarski's principle, $\bm \Fcal \neq \emptyset$ ensures that the two linear programs have the same optimal value $\bm \nu \in \K$. We let $(\bm x^*, \bm w^*) \in \bm \Pcal$ and $(\bm s^*, \bm y^*) \in \bm \Qcal$ be a pair of primal and dual optimal solutions. 
 
Let $\Pcal \subset \Trop^N$, $\Qcal \subset \Trop^N$ and $\Fcal \subset \Trop^{2N}$ the images under the valuation map of the primal and dual feasible polyhedra $\bm \Pcal$, $\bm \Qcal$ and $\bm \Fcal$, respectively. Similarly we write $(x^*, w^*) \in \Trop^N$ and $(s^*, y^*) \in \Trop^N$ for the coordinate-wise valuations of the optimal solutions $(\bm x^*, \bm w^*)$ and $(\bm s^*, \bm y^*)$.

Given a primal-dual feasible point $\bm z = (\bm x, \bm w, \bm s, \bm y) \in \bm \Fcal$, the \emph{duality gap}, denoted by $\gap(\bm z)$, is defined as the difference between the values of the primal and dual objective functions, \ie, $\scalar{\bm c}{\bm x} + \scalar{\bm b}{\bm y}$. We recall that $\gap(\bm z)$ is equivalently given by the \emph{complementarity gap} defined as the sum of the pairwise product of primal/dual variables:
\[
\gap(\bm z) = \scalar{\bm x}{\bm s} + \scalar{\bm w}{\bm y} \, .
\]
Observe that the right-hand side of the latter equality consists of sums of non-negatives terms (of the form $\bm x_j \bm s_j$ and $\bm w_i \bm y_i$). Consequently, we can apply the valuation map term-wise and define, for all $z = (x, w, s, y) \in \Fcal$, the \emph{tropical duality gap} as 
\[
\tgap(z) \coloneqq \tscalar{x}{s} \tplus \tscalar{w}{y} \, ,
\]
where $\tscalar{\cdot}{\cdot}$ stands for the tropical analogue of the scalar product, \ie,  $\tscalar{u}{v} \coloneqq \tsum_i (u_i \tdot v_i)$. Then the quantity $\tgap(z)$ equals the valuation of the duality gap of any primal-dual feasible $\bm z$ with $\val(\bm z) = z$.
The study of the tropical central path will require the following \emph{tropical sublevel set} induced by the tropical duality gap
\[
\Fcal^\lambda \coloneqq \{z \in \Fcal \colon \tgap(z) \leq \lambda \} \, ,
\]
which is defined for any $\lambda\in\R$.
We collect a few basic facts about this collection of sublevel sets.
\begin{proposition}\label{prop:sublevel}
Let $\bm \mu \in \K$ such that $\bm \mu > 0$, and $\lambda = \val(\bm \mu)$. Then
\begin{enumerate}[(i)]
\item\label{item:sublevel1} 
the set $\Fcal^\lambda$ is a bounded tropical polyhedron given by $\Pcal^\lambda \times \Qcal^\lambda$ where
\begin{align*}
\Pcal^\lambda & \coloneqq \{(x,w) \in \Pcal \colon \tscalar{s^*}{x} \tplus \tscalar{y^*}{w}  
\leq \lambda \} \, , \\
\Qcal^\lambda & \coloneqq \{(s,y) \in \Qcal \colon \tscalar{x^*}{s} \tplus \tscalar{w^*}{y} 
\leq \lambda \} \, ,
\end{align*}
\item\label{item:sublevel2} and the image under $\val$ of the point $\bm \Ccal(\bm \mu)$ lies in $\Fcal^\lambda$.
\end{enumerate} 
\end{proposition}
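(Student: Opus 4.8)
The strategy is to first re-express the classical (Puiseux) duality gap in a form that tropicalizes cleanly, then apply the tropicalization machinery of Section~\ref{subsec:tropicalization} (in particular Proposition~\ref{prop-direct}). The key classical identity is that for any $\bm z = (\bm x, \bm w, \bm s, \bm y) \in \bm \Fcal$, if $(\bm x^*, \bm w^*)$ and $(\bm s^*, \bm y^*)$ are the fixed optimal solutions, then
\[
\gap(\bm z) = \scalar{\bm x}{\bm s^*} + \scalar{\bm w}{\bm y^*} - \bm\nu + \text{(dual contribution)} \, ,
\]
more precisely: since $\bm A \bm x + \bm w = \bm b$ and $\bm s^* - \transpose{\bm A}\bm y^* = \bm c$, one computes $\scalar{\bm c}{\bm x} = \scalar{\bm s^*}{\bm x} - \scalar{\bm y^*}{\bm b - \bm w} = \scalar{\bm s^*}{\bm x} + \scalar{\bm y^*}{\bm w} - \scalar{\bm b}{\bm y^*} = \scalar{\bm s^*}{\bm x} + \scalar{\bm y^*}{\bm w} - \bm\nu$, and symmetrically $\scalar{\bm b}{\bm y} = \bm\nu - \scalar{\bm x^*}{\bm s} - \scalar{\bm w^*}{\bm y} + (\text{something})$ — I need to check the signs carefully, but the upshot is that $\gap(\bm z)$ splits as a sum of a ``primal part'' $\scalar{\bm s^*}{\bm x} + \scalar{\bm y^*}{\bm w} - \bm\nu$ depending only on $(\bm x,\bm w)$, and a ``dual part'' $\bm\nu - \scalar{\bm x^*}{\bm s} - \scalar{\bm w^*}{\bm y}$ — wait, this cannot be right since both parts must be nonnegative. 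The correct statement, using complementary slackness at the optima, is that $\gap(\bm z) = \bigl(\scalar{\bm s^*}{\bm x} + \scalar{\bm y^*}{\bm w}\bigr) + \bigl(\scalar{\bm x^*}{\bm s} + \scalar{\bm w^*}{\bm y}\bigr) - 2\bm\nu$ is wrong too; rather one gets $\gap(\bm z) = \scalar{\bm s}{\bm x} + \scalar{\bm y}{\bm w}$ directly, and separately, replacing $\bm c, \bm b$ via the optimal solutions, $\scalar{\bm c}{\bm x} + \scalar{\bm b}{\bm y}$ equals the sum of two manifestly nonnegative quantities $\scalar{\bm s^*}{\bm x} + \scalar{\bm y^*}{\bm w}$ and (by the dual computation) — the point is I will establish the clean identity that on $\bm\Pcal$, the duality gap of $(\bm x, \bm w)$ paired with \emph{any} fixed dual optimum equals $\scalar{\bm s^*}{\bm x} + \scalar{\bm y^*}{\bm w} - \bm\nu$, and since this quantity is $\geq 0$ with a cancellation only in the $-\bm\nu$ term, I must instead observe that the monomials $\bm s^*_j \bm x_j$, $\bm y^*_i \bm w_i$ have valuations that, when tropicalized, absorb $\bm\nu$; concretely the sublevel condition $\gap(\bm z)\leq \bm\mu$ becomes, after this reduction, $\scalar{\bm s^*}{\bm x} + \scalar{\bm y^*}{\bm w} \leq \bm\mu + \bm\nu$, and one checks $\val(\bm\mu + \bm\nu) = \max(\lambda, \val(\bm\nu))$; but in fact the central path forces $\gap(\bm\Ccal(\bm\mu)) = N\bm\mu$, so $\bm\nu \leq N\bm\mu$ and $\val(\bm\nu) \leq \lambda$, making $\val(\bm\mu+\bm\nu) = \lambda$ — though for the sublevel set of a \emph{general} feasible point this requires the separate observation that $\gap \geq 0$ forces each term's valuation to be $\leq \lambda$. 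I will organize this carefully so that the primal condition reads exactly $\tscalar{s^*}{x} \tplus \tscalar{y^*}{w} \leq \lambda$ as stated.

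Granting the reduction, part~\eqref{item:sublevel1} proceeds as follows. The set $\bm\Fcal^{\bm\mu} := \{\bm z \in \bm\Fcal : \gap(\bm z) \leq \bm\mu\}$ is a Puiseux polyhedron (it is $\bm\Fcal$ intersected with one linear inequality), contained in $\K_+^{2N}$, and it factors as $\bm\Pcal^{\bm\mu} \times \bm\Qcal^{\bm\mu}$ by the splitting of the gap into independent primal and dual parts. I then apply Proposition~\ref{prop-direct} to conclude that $\val(\bm\Fcal^{\bm\mu})$ is a tropical polyhedron; that its valuation is exactly the set $\Fcal^\lambda$ described requires checking that (a) the image of $\bm\Pcal$ under $\val$ is $\Pcal$ (this is essentially the definition), and (b) the single linear constraint $\scalar{\bm s^*}{\bm x} + \scalar{\bm y^*}{\bm w} \leq \bm\mu + \bm\nu$, being a sum of \emph{nonnegative} monomial-coefficient terms on the left, tropicalizes without cancellation to $\tscalar{s^*}{x} \tplus \tscalar{y^*}{w} \leq \lambda$ (using $\val(\bm\mu+\bm\nu)=\lambda$). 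The equality $\val(\bm\Pcal^{\bm\mu}) = \Pcal^\lambda$ — rather than mere containment — follows from the lifting construction in the proof of Proposition~\ref{prop-direct}, since one can lift any tropical point of $\Pcal^\lambda$ to a Puiseux point of $\bm\Pcal$ whose gap-monomials have the prescribed valuations, hence lies in $\bm\Pcal^{\bm\mu}$. Boundedness: $\Pcal^\lambda$ lives in the nonnegative orthant and the constraint $\tscalar{s^*}{x} \leq \lambda$ bounds each $x_j$ from above whenever $s^*_j \neq -\infty$; handling coordinates where $s^*_j = -\infty$ (or $y^*_i = -\infty$) requires a small argument using that $(\bm s^*, \bm y^*)$ being a \emph{vertex} optimum has enough nonzero coordinates, or alternatively that the feasible region equations $\bm A\bm x + \bm w = \bm b$ together with the gap bound already force boundedness — I will argue via Tarski/first-order transfer that $\bm\Pcal^{\bm\mu}$ is bounded in $\K^N$ and that valuation preserves boundedness for sets in the nonnegative orthant.

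Part~\eqref{item:sublevel2} is then immediate: the central path point $\bm\Ccal(\bm\mu)$ satisfies $\bm x^{\bm\mu}\bm s^{\bm\mu} = \bm w^{\bm\mu}\bm y^{\bm\mu} = \bm\mu e$ componentwise, so $\gap(\bm\Ccal(\bm\mu)) = \scalar{\bm x^{\bm\mu}}{\bm s^{\bm\mu}} + \scalar{\bm w^{\bm\mu}}{\bm y^{\bm\mu}} = N\bm\mu$, whence $\val(\gap(\bm\Ccal(\bm\mu))) = \val(N\bm\mu) = \lambda$, and therefore $\val(\bm\Ccal(\bm\mu)) \in \Fcal^\lambda$. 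The main obstacle I anticipate is the bookkeeping around the term $\bm\nu$: ensuring that after rewriting the duality gap in terms of the fixed optimal solutions, the resulting inequality tropicalizes to precisely $\tscalar{s^*}{x}\tplus\tscalar{y^*}{w} \leq \lambda$ with no spurious contribution from $\val(\bm\nu)$, and correctly handling the coordinates where the optimal solution has a zero (valuation $-\infty$) entry so that boundedness of $\Fcal^\lambda$ genuinely holds. Everything else is a routine application of Proposition~\ref{prop-direct} together with complementary slackness and the no-cancellation property of valuations on $\K_+$.
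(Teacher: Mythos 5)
Your part~(\ref{item:sublevel2}) is fine and matches the paper: $\gap(\bm\Ccal(\bm\mu))=N\bm\mu$, so $\tgap(\val\bm\Ccal(\bm\mu))=\val(N\bm\mu)=\lambda$. For part~(\ref{item:sublevel1}), however, there are two genuine gaps.

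First, the $\bm\nu$ bookkeeping never resolves, and the patches you propose are false. The clean identity is
$\gap(\bm z)=\bigl(\scalar{\bm c}{\bm x}-\bm\nu\bigr)+\bigl(\scalar{\bm b}{\bm y}+\bm\nu\bigr)=\gap(\bm x,\bm w,\bm s^*,\bm y^*)+\gap(\bm x^*,\bm w^*,\bm s,\bm y)$,
and each summand, rewritten as a complementarity gap ($\scalar{\bm x}{\bm s^*}+\scalar{\bm w}{\bm y^*}$, resp.\ $\scalar{\bm x^*}{\bm s}+\scalar{\bm w^*}{\bm y}$), is a sum of products of elements of $\K_+$; so $\bm\nu$ disappears entirely and the whole expression tropicalizes term by term, giving $\tgap(z)=\tgap(x,w,s^*,y^*)\tplus\tgap(x^*,w^*,s,y)$ and hence $\Fcal^\lambda=\Pcal^\lambda\times\Qcal^\lambda$ directly on the tropical side. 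Your version instead keeps a right-hand side $\bm\mu+\bm\nu$ and argues $\val(\bm\mu+\bm\nu)=\lambda$ via ``$\gap(\bm\Ccal(\bm\mu))=N\bm\mu$, so $\bm\nu\leq N\bm\mu$'': this implication is wrong (the gap being $N\bm\mu$ bounds the suboptimality $\scalar{\bm c}{\bm x^{\bm\mu}}-\bm\nu$, not $\bm\nu$ itself), and $\val(\bm\nu)\leq\lambda$ fails for general $\lambda$ since $\lambda=\val(\bm\mu)$ can be arbitrarily negative while $\bm\nu$ is fixed. Relatedly, your detour through $\val(\bm\Pcal^{\bm\mu})=\Pcal^\lambda$ is both unnecessary ($\Fcal^\lambda$ is defined tropically, and once the gap identity is tropicalized it is visibly the intersection of the tropical polyhedron $\Fcal$ with two tropical halfspaces) and unjustified as stated: a lift of a point with $\tgap(z)=\lambda$ exactly need not satisfy $\gap(\bm z)\leq\bm\mu$, since the leading coefficients matter, so the lifting construction of Proposition~\ref{prop-direct} does not by itself give surjectivity onto $\Pcal^\lambda$.

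Second, boundedness. The case you flag as needing ``a small argument'' is exactly the hard case and does occur: by complementary slackness $\bm s^*_j=0$ (hence $s^*_j=-\infty$) whenever $\bm x^*_j>0$, so the constraint $\tscalar{s^*}{x}\tplus\tscalar{y^*}{w}\leq\lambda$ alone does not bound those coordinates, and ``the vertex optimum has enough nonzero coordinates'' cannot save this. The paper's argument is different and uses part~(\ref{item:sublevel2}) together with the product structure: pick $z^\circ=\val(\bm\Ccal(\bm\mu))\in\Fcal^\lambda$, which has all coordinates finite; for any $(x,w,s,y)\in\Fcal^\lambda$ the point $(x,w,s^\circ,y^\circ)$ also lies in $\Fcal^\lambda=\Pcal^\lambda\times\Qcal^\lambda$, so $x_j\tdot s^\circ_j\leq\lambda$ and $w_i\tdot y^\circ_i\leq\lambda$, giving finite upper bounds $x_j\leq\lambda\tdot(s^\circ_j)^{\tdot(-1)}$ and $w_i\leq\lambda\tdot(y^\circ_i)^{\tdot(-1)}$, and symmetrically for $(s,y)$. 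Without this (or an equivalent substitute, e.g.\ a proof that every nonzero recession direction of $\bm\Pcal$ has strictly positive cost under Assumption~\ref{assump:strictly_feasible}), the boundedness claim is not established.
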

 
\begin{proof}
We start with the proof of~\eqref{item:sublevel2}. By definition, $\bm \Ccal(\bm \mu) \in \bm \Fcal$ so that $\val (\bm \Ccal(\bm \mu)) \in \Fcal$. Moreover, we have $\tgap\bigl(\val (\bm \Ccal(\bm \mu)) \bigr) = \val (\gap(\bm \Ccal(\bm \mu))$. Since $\gap(\bm \Ccal(\bm \mu)) = N \bm \mu$ by \eqref{eq:duality_measure}, we deduce that the previous quantity is equal to the valuation of $N \bm \mu$, which is $\lambda$. This forces that $\val(\bm \Ccal(\bm \mu))$ lies in $\Fcal^\lambda$. 

We need to show that $\Fcal^\lambda$ is a tropical polyhedron which is bounded.
To this end we consider $\bm z = (\bm x, \bm w, \bm s, \bm y) \in \bm \Fcal$.
Recall that $\bm \nu$ is the common optimal value of the primal and the dual Puiseux linear programs. Then we obtain
\[
\gap(\bm z) = \bigl(\scalar{\bm c}{\bm x} - \bm \nu\bigr) + \bigl(\scalar{\bm b}{\bm y} + \bm \nu\bigr) = \gap(\bm x, \bm w, \bm s^*, \bm y^*) + \gap(\bm x^*, \bm w^*, \bm s, \bm y) \, .
\]
Since the right-hand side of this identity is a sum of two non-negative terms, applying the valuation map yields $\tgap(z) = \tgap(x, w, s^*, y^*) \tplus \tgap(x^*, w^*, s, y)$ for all $z = (x, w, s, y) \in \Fcal$.
Thus we can express the tropical sublevel set $\Fcal^\lambda$ as
\[
\Fcal^\lambda = \{ (x, w, s, y) \in \Fcal \colon \tgap(x, w, s^*, y^*) \tplus \tgap(x^*, w^*, s, y) \leq \lambda \} = P^\lambda \times Q^\lambda \, .
\]
In particular, the set $\Fcal^\lambda$ is a tropical polyhedron.

To complete our proof we still have to show that $\Fcal^\lambda$ is bounded. To this end, pick some point $z^\circ = (x^\circ, w^\circ, s^\circ, y^\circ)$ in $\Fcal^\lambda$ which has finite coordinates. For instance, we can take $z^\circ = \val(\bm \Ccal(\bm \mu))$ by making use of~\eqref{item:sublevel2}. Now consider an arbitrary point $(x, w, s, y) \in \Fcal^\lambda$. As $\Fcal^\lambda=\Pcal^\lambda\times\Qcal^\lambda$, we know that $(x, w, s^\circ, y^\circ) \in \Fcal^\lambda$. In particular, $\tgap(x, w, s^\circ, y^\circ) \leq \lambda$, or, equivalently, 
\[
\left\{
\begin{aligned}
x_j \tdot s^\circ_j & \leq \lambda \qquad \text{for all} \ j \in [n] \, ,\\
w_i \tdot y^\circ_i & \leq \lambda \qquad \text{for all} \ i \in [m] \, .
\end{aligned}\right.
\]
Since $s^\circ_j, y^\circ_i > -\infty$, this entails that $x_j \leq \lambda \tdot (s^\circ_j)^{\tdot (-1)}$ and $w_i \leq \lambda \tdot (y^\circ_i)^{\tdot (-1)}$ for all $i \in [m]$ and $j \in [n]$. Similarly, the entries of $(s,y)$ are bounded, too. This completes the proof of~\eqref{item:sublevel1}.
\end{proof}

As a bounded tropical polyhedron the set $\Fcal^\lambda$ admits a (tropical) barycenter.
Recall that the latter was defined as the coordinate-wise maximum of that set.
The following theorem relates this barycenter with the valuation of the central path, and gives rise to the definition of the tropical central path:
\begin{theorem}\label{th:trop_central_path}
The image under the valuation map of the central path 
of the pair of primal-dual linear programs $\textbf{LP}(\bm A, \bm b, \bm c)$ and $\textbf{DualLP}(\bm A, \bm b, \bm c)$ can be described by:
\begin{equation}\label{eq:trop_central_path:barycenter}
  \val ( \bm \Ccal (\bm \mu)) = \text{barycenter of } \{ z \in \val (\bm{\mathcal{F}} ) \colon \tgap(z) \leq \val( \bm \mu) \} \ ,
\end{equation}
for any $\bm \mu \in \K$ such that $\bm \mu > 0$.
\end{theorem}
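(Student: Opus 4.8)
The plan is to prove that $z^{\bm\mu}:=\val(\bm\Ccal(\bm\mu))$ is the coordinate-wise maximum of the tropical sublevel set $\Fcal^\lambda$, where $\lambda=\val(\bm\mu)$; since $\Fcal^\lambda$ is a \emph{bounded} tropical polyhedron by Proposition~\ref{prop:sublevel}, this coordinate-wise maximum is exactly its barycenter, and Proposition~\ref{prop:sublevel} moreover already tells us that $z^{\bm\mu}\in\Fcal^\lambda$. So the only thing left to establish is the reverse domination, namely $z\le z^{\bm\mu}$ coordinate-wise in $\Trop^{2N}$ for every $z\in\Fcal^\lambda$. Using the product decomposition $\Fcal^\lambda=\Pcal^\lambda\times\Qcal^\lambda$ from Proposition~\ref{prop:sublevel}, it suffices to prove $(x,w)\le(x^{\bm\mu},w^{\bm\mu})$ for all $(x,w)\in\Pcal^\lambda$, the dual part being entirely symmetric.

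Fix $(x,w)\in\Pcal^\lambda$. Since $\Pcal=\val(\bm\Pcal)$, I would choose a lift $(\bm x,\bm w)\in\bm\Pcal\subset\K_+^N$ with $\val(\bm x)=x$ and $\val(\bm w)=w$ componentwise. The defining inequality of $\Pcal^\lambda$ is $\tscalar{s^*}{x}\tplus\tscalar{y^*}{w}\le\lambda$; since $\gap(\bm x,\bm w,\bm s^*,\bm y^*)=\scalar{\bm s^*}{\bm x}+\scalar{\bm y^*}{\bm w}$ is a sum of products of non-negative Puiseux series, there is no cancellation of leading terms, and Lemma~\ref{lemma:homomorphism} gives $\val\bigl(\gap(\bm x,\bm w,\bm s^*,\bm y^*)\bigr)=\tscalar{s^*}{x}\tplus\tscalar{y^*}{w}\le\lambda$. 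The conceptual crux is now to pair this arbitrary primal point not with $(\bm x^{\bm\mu},\bm w^{\bm\mu})$ but with the central path's \emph{dual} point: consider the feasible point $\bm z:=(\bm x,\bm w,\bm s^{\bm\mu},\bm y^{\bm\mu})\in\bm\Fcal$. For each index $j$ the product $\bm x_j\bm s_j^{\bm\mu}$ is one of the non-negative summands of $\gap(\bm z)=\scalar{\bm s^{\bm\mu}}{\bm x}+\scalar{\bm y^{\bm\mu}}{\bm w}$, so $\val(\bm x_j)+\val(\bm s_j^{\bm\mu})=\val(\bm x_j\bm s_j^{\bm\mu})\le\val(\gap(\bm z))$. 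Applying the decomposition $\gap(\bm z)=\gap(\bm x,\bm w,\bm s^*,\bm y^*)+\gap(\bm x^*,\bm w^*,\bm s^{\bm\mu},\bm y^{\bm\mu})$ from the proof of Proposition~\ref{prop:sublevel} --- once more a sum of two non-negative terms, so its valuation is the maximum of the two --- together with the bound just obtained on the first term and with $\val\bigl(\gap(\bm x^*,\bm w^*,\bm s^{\bm\mu},\bm y^{\bm\mu})\bigr)\le\val\bigl(\gap(\bm\Ccal(\bm\mu))\bigr)=\val(N\bm\mu)=\lambda$ for the second, I obtain $\val(\bm x_j)+\val(\bm s_j^{\bm\mu})\le\lambda$.

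Finally, the central path equation $\bm x_j^{\bm\mu}\bm s_j^{\bm\mu}=\bm\mu$ yields $\val(\bm x_j^{\bm\mu})+\val(\bm s_j^{\bm\mu})=\lambda$, and $\bm s_j^{\bm\mu}>0$ so its valuation is a real number; subtracting gives $x_j=\val(\bm x_j)\le\val(\bm x_j^{\bm\mu})=x_j^{\bm\mu}$. The same computation with the summand $\bm w_i\bm y_i^{\bm\mu}$ and the equation $\bm w_i^{\bm\mu}\bm y_i^{\bm\mu}=\bm\mu$ gives $w_i\le\val(\bm w_i^{\bm\mu})$, so $(x,w)\le(x^{\bm\mu},w^{\bm\mu})$ coordinate-wise. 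The symmetric argument --- pairing an arbitrary $(\bm s,\bm y)$ lifting a point of $\Qcal^\lambda$ with the primal central point $(\bm x^{\bm\mu},\bm w^{\bm\mu})$ and using $\gap(\bm x^{\bm\mu},\bm w^{\bm\mu},\bm s,\bm y)=\gap(\bm x^*,\bm w^*,\bm s,\bm y)+\gap(\bm x^{\bm\mu},\bm w^{\bm\mu},\bm s^*,\bm y^*)$ --- yields $(s,y)\le(s^{\bm\mu},y^{\bm\mu})$. Together with $z^{\bm\mu}\in\Fcal^\lambda$ this identifies $z^{\bm\mu}$ as the coordinate-wise maximum, i.e.\ the barycenter, of $\Fcal^\lambda$, which is the asserted formula~\eqref{eq:trop_central_path:barycenter}. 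I do not anticipate a serious obstacle here; the one point that requires care is the trick of pairing a generic point on one side with the central-path point on the \emph{other} side (comparing two primal points directly would drag in the barrier/analytic-center structure, which is awkward over $\K$), and then making sure the translation between tropical inequalities and valuations is legitimate --- which it is, precisely because every quantity in sight is a sum of products of non-negative series, so $\val$ acts as a homomorphism with no leading-term cancellation.
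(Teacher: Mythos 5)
Your proof is correct. It rests on the same two pillars as the paper's argument --- Proposition~\ref{prop:sublevel} and the tropicalized complementarity relation $\val(\bm x_j^{\bm\mu}) \tdot \val(\bm s_j^{\bm\mu}) = \val(\bm\mu)$ obtained from $\bm x^{\bm\mu}_j \bm s^{\bm\mu}_j = \bm\mu$ --- but it is organized differently. The paper never forms the mixed point $(\bm x,\bm w,\bm s^{\bm\mu},\bm y^{\bm\mu})$: writing $\bar z$ for the barycenter of $\Fcal^\lambda$, it notes that $\bar z$ itself belongs to $\Fcal^\lambda$ (a bounded tropical polyhedron contains its coordinate-wise supremum), so $\bar x_j \tdot \bar s_j \leq \lambda$, while $\val(\bm\Ccal(\bm\mu)) \leq \bar z$ holds because the barycenter dominates every element of $\Fcal^\lambda$; the chain $\lambda = \val(\bm x_j^{\bm\mu})\tdot\val(\bm s_j^{\bm\mu}) \leq \bar x_j \tdot \bar s_j \leq \lambda$ then forces both coordinate inequalities to be equalities simultaneously. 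You instead prove the domination $z \leq \val(\bm\Ccal(\bm\mu))$ pointwise for every $z\in\Fcal^\lambda$ by cross-pairing: a lift of an arbitrary primal point is combined with the dual central point, the gap of the resulting feasible point is bounded by $\lambda$ via the decomposition into two non-negative terms, and the inequality $x_j \tdot s_j^{\bm\mu} \leq \lambda = x_j^{\bm\mu}\tdot s_j^{\bm\mu}$ is cancelled against the finite $s_j^{\bm\mu}$. Both are the same complementary-slackness squeeze; the paper's version is shorter because the membership $\bar z\in\Fcal^\lambda$ delivers the bound $\bar x_j \tdot \bar s_j \leq \lambda$ for free, whereas you re-derive the analogous bound by returning to Puiseux series. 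Note that your cross-pairing step could itself be streamlined: by Proposition~\ref{prop:sublevel}\eqref{item:sublevel1} the sublevel set is the product $\Pcal^\lambda\times\Qcal^\lambda$, so the mixed tropical point $(x,w,s^{\bm\mu},y^{\bm\mu})$ already lies in $\Fcal^\lambda$ and hence satisfies $x_j \tdot s_j^{\bm\mu}\leq\tgap(x,w,s^{\bm\mu},y^{\bm\mu})\leq\lambda$ with no need for an explicit lift to $\K$.
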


\begin{proof}
Let $\lambda \coloneqq \val(\bm \mu)$, and denote by $\bar z = (\bar x, \bar w,\bar s,\bar y)$ the barycenter of the tropical polyhedron $\Fcal^\lambda$. By construction we have $\val(\bm \Ccal (\bm \mu)) \leq \bar z$. Moreover, since $\tgap\bigl(\bar z\bigr) \leq \lambda$, we also have $\bar x_j \tdot \bar s_j \leq \lambda$ and $\bar w_i \tdot \bar y_i \leq \lambda$ for all $i \in [m]$ and $j \in [n]$. It follows that:
\begin{equation}\label{eq:trop_central_path:barycenter:characterization}
\left\{
\begin{aligned}
\lambda & = \val(\bm x^{\bm \mu}_j \bm s^{\bm \mu}_j) = \val(\bm x^{\bm \mu}_j) \tdot \val(\bm s^{\bm \mu}_j) \leq \bar x_j \tdot \bar s_j \leq \lambda \, , \\
\lambda & = \val(\bm w^{\bm \mu}_j \bm y^{\bm \mu}_j) = \val(\bm w^{\bm \mu}_i) \tdot \val(\bm y^{\bm \mu}_i) \leq \bar w_i \tdot \bar y_i \leq \lambda \, .
\end{aligned}\right.
\end{equation}
As a consequence, the inequality $\val(\bm \Ccal (\bm \mu)) \leq \bar z$ is necessarily an equality. 
\end{proof}

The quantity \eqref{eq:trop_central_path:barycenter}, which depends only on the valuation of $\bm \mu$, is called \emph{the tropical central path} at $\lambda = \val( \bm \mu)$ and is denoted by 
\[ \Ctrop(\lambda) = (x^\lambda, w^\lambda, s^\lambda, y^\lambda)
\enspace .
\]
Analogously, the \emph{primal} and the \emph{dual tropical central paths} are defined by projecting to $(x^\lambda, w^\lambda)$ and $(s^\lambda, y^\lambda)$, respectively. As shown in \eqref{eq:trop_central_path:barycenter:characterization}, the primal and dual components of the tropical central path are characterized by
\begin{equation}\label{eq:trop_csc}
x^\lambda_j \tdot s^\lambda_j  = \lambda = w^\lambda_i \tdot y^\lambda_i
\end{equation}
for all $i\in[m]$ and $j\in[n]$.
The next statement shows that the tropical central path is a polygonal curve with a particularly simple structure.
\begin{proposition}\label{prop:piecewise}
The tropical central path $\lambda \mapsto \Ctrop(\lambda)$ is a monotone piecewise linear function,
whose derivative at each smooth point is a vector of the 
form $(e^K, e^{[N]\setminus K})$, for some $K \subset [N]$.
\end{proposition}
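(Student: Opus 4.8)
The plan is to combine the geometric characterisation from Theorem~\ref{th:trop_central_path} with the complementarity relations \eqref{eq:trop_csc}, and to reduce everything to the primal part of the path. First observe that $\bm\Ccal(\bm\mu)$ has strictly positive, hence nonzero, coordinates in $\K$, so all coordinates of $\Ctrop(\lambda)=(x^\lambda,w^\lambda,s^\lambda,y^\lambda)$ are real numbers and differentiability makes sense; moreover \eqref{eq:trop_csc} reads $s^\lambda_j=\lambda-x^\lambda_j$ and $y^\lambda_i=\lambda-w^\lambda_i$, so the dual part of the path is an affine function of the primal part and of $\lambda$. Hence at every smooth point the dual derivative equals $e$ minus the primal derivative, and the set of smooth points of $\Ctrop$ coincides with that of $\lambda\mapsto(x^\lambda,w^\lambda)$. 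It therefore suffices to show that the primal tropical central path $\lambda\mapsto(x^\lambda,w^\lambda)$ is monotone non-decreasing, piecewise linear, and has derivative $e^K$ for some $K\subseteq[N]$ at each smooth point: then the dual derivative is $e-e^K=e^{[N]\setminus K}$ (in particular again a $\{0,1\}$-vector, and non-decreasing), which is exactly the asserted block form.

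I would then establish the required properties of the primal path from the description $(x^\lambda,w^\lambda)=\text{barycenter of }\Pcal^\lambda$ furnished by Theorem~\ref{th:trop_central_path} and Proposition~\ref{prop:sublevel}. Monotonicity is immediate: by Proposition~\ref{prop:sublevel}, $\lambda\le\lambda'$ implies $\Pcal^\lambda\subseteq\Pcal^{\lambda'}$, and the coordinate-wise maximum of a larger set is coordinate-wise larger. For the piecewise-linear structure and the slope condition, use the explicit form from Proposition~\ref{prop:sublevel}(i): $\Pcal^\lambda=\Pcal\cap\{z\in\Trop^N\colon z_k\le\lambda-d_k\text{ for all }k\in J\}$, where $d\coloneqq(s^*,y^*)$ and $J\coloneqq\{k\colon d_k\neq-\infty\}$. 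Thus, as $\lambda$ varies, $\Pcal^\lambda$ is the intersection of the \emph{fixed} tropical polyhedron $\Pcal$ with a box whose finite upper bounds all move affinely with slope $1$. Writing $\Pcal$ through finitely many tropical halfspaces — equivalently through finitely many generators as in \eqref{e-minkowskiweyltropical} — the bounded tropical polyhedron $\Pcal^\lambda$ takes only finitely many combinatorial types as $\lambda$ ranges over $\R$, with finitely many transition values; on each resulting open interval the barycenter of $\Pcal^\lambda$ is computed by max-plus linear algebra (tropical Cramer-type formulas / residuation applied to the active constraints) as a coordinate-wise maximum of terms each of which is either a constant — coming from the fixed coefficients of $\Pcal$ and the normalisation $\bigoplus_u\alpha_u=0$ of its convex part in \eqref{e-minkowskiweyltropical} — or $\lambda$ plus a constant, coming from the moving bounds $\lambda-d_k$. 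Consequently each coordinate of $(x^\lambda,w^\lambda)$ is, on each such interval, affine with slope $0$ or $1$, so the primal derivative at a smooth point is a $\{0,1\}$-vector $e^K$, and combined with the previous paragraph this proves the proposition.

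The main obstacle is precisely the slope statement: establishing that on a cell the barycenter of $\Pcal^\lambda=\Pcal\cap\mathrm{box}(\lambda)$ reacts to the uniform relaxation of its upper-bound box with integer rate $0$ or $1$ in each coordinate — equivalently, that the tropical central path never distributes the unit rate of $\lambda$ between two complementary coordinates with non-integer weights. The clean way to package this is to analyse the projection ``from above'' $\beta\mapsto\max\{z\in\Pcal\colon z\le\beta\}$ onto $\Pcal$, which is the residuated map whose pieces are max-plus linear in $\beta$; the subtle point is that $\Pcal$ is an \emph{affine} tropical polyhedron rather than a tropical cone, and it is exactly the affine normalisation that produces the slope-$0$ pieces alongside the slope-$1$ ones (if one forgets it one wrongly gets slope $1$ everywhere). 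An alternative packaging, if one can first show that the tropical central path decomposes into finitely many tropical segments $\tsegm(\Ctrop(\lambda_i),\Ctrop(\lambda_{i+1}))$ with $\Ctrop(\lambda_i)\le\Ctrop(\lambda_{i+1})$, is to invoke Lemma~\ref{lemma:trop_segments}, which then forces the supporting direction vectors of each such segment to be of the form $e^{K_1}\subsetneq\dots\subsetneq e^{K_\ell}$, hence in particular the single direction on any smooth sub-interval to be some $e^K$; combined with complementarity this again yields the block form $(e^K,e^{[N]\setminus K})$.
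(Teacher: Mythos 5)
Your overall strategy is the paper's: both arguments rest on expressing the barycenter of $\Pcal^\lambda$ through a fixed finite set of generators of $\Pcal$ and observing that the extremal coefficients are affine in $\lambda$ with slope $0$ or $1$. Your reduction of the dual part to the primal part via \eqref{eq:trop_csc} is a legitimate (and slightly slicker) variant; the paper instead runs the same argument on the dual side and uses \eqref{eq:trop_csc} only at the end to identify $K'=[N]\setminus K$.

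The one place where you stop short is exactly the step you flag as ``the main obstacle'': you assert, but do not prove, that on each cell every coordinate of the barycenter is affine with slope $0$ or $1$. The paper closes this in two lines, and it is worth seeing how little is needed. Writing $g(x,w)=\tscalar{s^*}{x}\tplus\tscalar{y^*}{w}$ and picking generators $U$ (points) and $V$ (rays) of $\Pcal$, a combination $\bigl(\tsum_{u}\alpha_u\tdot u\bigr)\tplus\bigl(\tsum_{v}\beta_v\tdot v\bigr)$ with $\tsum_u\alpha_u=0$ satisfies $g\le\lambda$ if and only if $\alpha_u\tdot g(u)\le\lambda$ and $\beta_v\tdot g(v)\le\lambda$ for every generator, because $g$ is tropically linear. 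Maximality of the barycenter then forces $\alpha_u=\min\bigl(0,\lambda-g(u)\bigr)$ and $\beta_v=\lambda-g(v)$, so each coordinate of the barycenter is a finite maximum of terms $\alpha_u+u_i$ and $\beta_v+v_i$, hence piecewise affine, nondecreasing, with slope in $\{0,1\}$ at smooth points --- no Cramer-type formulas and no enumeration of combinatorial types of $\Pcal^\lambda$ are required. (One also needs $g(v)\neq-\infty$ for every ray $v$, so that $\beta_v$ is finite; this follows from the boundedness of $\Pcal^\lambda$.) Finally, your alternative packaging via Lemma~\ref{lemma:trop_segments} should be avoided: knowing that $\Ctrop$ restricted to an interval is a tropical segment is essentially equivalent to what is being proved (the paper deduces the segment decomposition \emph{from} Proposition~\ref{prop:piecewise}, not conversely), so that route is circular unless you supply an independent argument for the decomposition.
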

\begin{proof}
Let us denote by $g \colon \Trop^N \to \Trop$ the function which sends $(x,w)$ to $\tscalar{s^*}{x} \tplus \tscalar{y^*}{w}$. Pick finite generating sets $U \subset \Trop^N$ and $V \subset \Trop^N \setminus \{(-\infty, \dots, -\infty)\}$ for the tropical polyhedron $\Pcal$. Since $(x^\lambda, w^\lambda)$ lies in $\Pcal$ it can be expressed as
\[
(x^\lambda, w^\lambda) = \Bigl(\tsum_{u \in U} \alpha_u \tdot u\Bigr) \tplus \Bigl(\tsum_{v \in V} \beta_v \tdot v \Bigr) \, ,
\]
where $\tsum_{u \in U} \alpha_u = 0$. The inequality $g(x^\lambda, w^\lambda) \leq \lambda$ now amounts to $\alpha_u \tdot g(u) \leq \lambda$ and $\beta_v \tdot g(v) \leq \lambda$ for all $u \in U$ and $v \in V$. As $(x^\lambda, w^\lambda)$ is the barycenter of $\Pcal^\lambda$, the coefficients $\alpha_u$ and $\beta_v$ 
can be chosen 
to be maximal. This enforces $\alpha_u = \min\bigl(0, \lambda \tdot (g(u))^{\tdot (-1)} \bigr)$ and $\beta_v = \lambda \tdot (g(v))^{\tdot (-1)}$, using the convention $(-\infty)^{\tdot (-1)} = +\infty$. Note that $g(v) \neq -\infty$ for all $v \in V$. Indeed, if there were a ray $v \in V$ with $g(v) = -\infty$, then any point of the form $(x^*, w^*) \tplus (\beta \tdot v)$ would belong to $\Pcal^\lambda$.  The latter conclusion would contradict the boundedness of $\Pcal^\lambda$. Therefore, all $\alpha_u$ and $\beta_v$ belong to $\R$. Observe also that $\alpha_u$ and $\beta_v$, thought of as functions of $\lambda$, are monotone, piecewise linear, and that their derivatives at any smooth point take value in $\{0,1\}$. 
It follows that $\lambda \mapsto (x^\lambda, s^\lambda)$ is piecewise linear and monotone, and that its derivative at any smooth point is of the form $e^K$. 

A similar argument reveals that the map $\lambda \mapsto (s^\lambda, y^\lambda)$ is also piecewise linear and monotone, with a derivative at any smooth point of the form $e^{K'}$ 
for some set $K' \subset [N]$. 
In consequence, the derivative of the map $\Ctrop$ at any smooth point is of the form $(e^K, e^{K'})$ and, from \eqref{eq:trop_csc}, we get that $K' = [N]\setminus K$.  This proves our claim.
\end{proof}

We end this section with the following direct consequence of Proposition~\ref{prop:piecewise}.
\begin{corollary}\label{cor:nonexpansive}
If $\lambda \leq \lambda'$, then $\Ctrop(\lambda) \leq \Ctrop(\lambda') \leq \Ctrop(\lambda) + (\lambda' - \lambda) e$.
\end{corollary}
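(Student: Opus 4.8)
The plan is to read off both inequalities directly from Proposition~\ref{prop:piecewise}. That proposition tells us that $\lambda\mapsto\Ctrop(\lambda)$ is a monotone, piecewise linear map, and that at every point of differentiability its derivative is the $0/1$-vector $(e^K,e^{[N]\setminus K})$ for some $K\subseteq[N]$. Monotonicity of each coordinate immediately yields the left-hand inequality $\Ctrop(\lambda)\leq\Ctrop(\lambda')$ whenever $\lambda\leq\lambda'$.

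For the right-hand inequality, I would first note that by the characterization~\eqref{eq:trop_csc} every coordinate of $\Ctrop(\lambda)$ is finite (it satisfies $x^\lambda_j\tdot s^\lambda_j=\lambda\in\R$, and similarly for the $w,y$ blocks), so the difference $\Ctrop(\lambda')-\Ctrop(\lambda)$ is a well-defined vector in $\R^{2N}$. Since $\Ctrop$ is continuous and piecewise linear, it has finitely many breakpoints in $[\lambda,\lambda']$, say $\lambda=\tau_0<\tau_1<\dots<\tau_k=\lambda'$, and on each segment $[\tau_i,\tau_{i+1}]$ the map is affine with slope a $0/1$-vector, whence $\Ctrop(\tau_{i+1})-\Ctrop(\tau_i)\leq(\tau_{i+1}-\tau_i)e$. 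Summing these inequalities telescopically over $i=0,\dots,k-1$ gives $\Ctrop(\lambda')-\Ctrop(\lambda)\leq(\lambda'-\lambda)e$, which is the claimed bound. Equivalently, one may integrate the derivative componentwise over $[\lambda,\lambda']$ and use that each of its entries is at most $1$.

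There is essentially no obstacle: the statement merely repackages the structural information already established in Proposition~\ref{prop:piecewise}. The only points requiring a word of care are that $\Ctrop$ takes finite values everywhere, so that the coordinatewise subtraction is legitimate, and that the $0/1$ bound on the slopes on each linear piece translates into the coordinatewise $1$-Lipschitz estimate; both are immediate.
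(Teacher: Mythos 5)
Your argument is correct and is exactly the route the paper intends: the corollary is stated as a direct consequence of Proposition~\ref{prop:piecewise}, with monotonicity giving the left inequality and the $0/1$ bound on the slopes of the linear pieces giving the coordinatewise $1$-Lipschitz upper bound. The paper gives no further proof, so your telescoping/integration argument simply spells out the same immediate deduction.
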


\subsection{A uniform metric estimate on the convergence of the central path}\label{subsec:uniform}

For now, we have related the tropical central path with the central path of the linear programs $\textbf{LP}(\bm A, \bm b, \bm c)$ and $\textbf{DualLP}(\bm A, \bm b, \bm c)$ over Puiseux series. These give rise to a parametric family of dual linear programs $\text{LP}(\bm A(t), \bm b(t), \bm c(t))$ and $\text{DualLP}(\bm A(t), \bm b(t), \bm c(t))$ over the reals. The purpose of this section is to relate the resulting family of central paths with the tropical central path. More precisely, we will show that the logarithmic deformation of these central paths uniformly converge to the tropical curve. In fact, we will even show that the logarithmic deformation of the wide neighborhoods $\Ncal^{-\infty}_{\theta}$ of these central paths collapses onto the tropical central path.

Let $\bm \Pcal(t), \bm \Qcal(t) \subset \R^N$ be the feasible sets of $\text{LP}(\bm A(t), \bm b(t), \bm c(t))$ and $\text{DualLP}(\bm A(t), \bm b(t), \bm c(t))$, respectively. (Note that this notation is compatible with the one introduced in 
Section~\ref{subsec:fields} thanks to Proposition~\ref{prop-bygenerators}.) Then $\bm \Fcal(t) \coloneqq \bm \Pcal(t) \times \bm \Qcal(t)$ is the primal-dual feasible set, while $\bm \Fcal^\circ(t) \coloneqq \{z \in \bm \Fcal(t) \colon z > 0 \}$ comprises only those primal-dual points which are strictly feasible.
The following lemma relates the optimal solutions of $\textbf{LP}(\bm A, \bm b, \bm c)$ and $\textbf{DualLP}(\bm A, \bm b, \bm c)$ with 
the ones
of $\text{LP}(\bm A(t), \bm b(t), \bm c(t))$ and $\text{DualLP}(\bm A(t), \bm b(t), \bm c(t))$.
\begin{lemma}\label{lemma:archimedean}
There exists a positive real number $t_0$ such that for all $t > t_0$ the following three properties hold:
\begin{enumerate}[(i)]
\item\label{item:int} the set $\bm \Fcal^\circ(t)$ is non-empty;
\item the number $\bm \nu(t)$ is the optimal value of $\text{LP}(\bm A(t), \bm b(t), \bm c(t))$ and $\text{DualLP}(\bm A(t), \bm b(t), \bm c(t))$;
\item $(\bm x^*(t), \bm w^*(t))$ and $(\bm s^*(t), \bm y^*(t))$ constitute optimal solutions.
\end{enumerate}
\end{lemma}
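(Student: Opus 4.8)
The plan is to transfer three facts from the field $\K$ to the reals, valid for $t$ large enough: strict feasibility of one fixed point, feasibility of the lifted optimal solutions $(\bm x^*(t), \bm w^*(t))$ and $(\bm s^*(t), \bm y^*(t))$, and the identity expressing that the duality gap vanishes at optimality. Because the threshold on $t$ in the transfer principle~\eqref{e-equivpt} cannot be made uniform over all of $\K^d$ (see the Remark following Proposition~\ref{prop-bygenerators}), the essential point is that we invoke it only for the \emph{finitely many fixed} points $\bm z^\circ, (\bm x^*, \bm w^*), (\bm s^*, \bm y^*)$, so that taking the maximum of the finitely many resulting thresholds yields a single $t_0$. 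Throughout we use, via Proposition~\ref{prop-bygenerators}, that the sets $\bm\Pcal(t), \bm\Qcal(t), \bm\Fcal(t)$ in the sense of this section agree with the ones obtained from $\bm\Pcal, \bm\Qcal, \bm\Fcal$ as in Section~\ref{subsec:fields}.

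First fix $\bm z^\circ = (\bm x^\circ, \bm w^\circ, \bm s^\circ, \bm y^\circ) \in \bm\Fcal^\circ$, which exists by Assumption~\ref{assump:strictly_feasible}. Each of its $2N$ coordinates is a positive element of $\K$, hence has positive leading coefficient and therefore positive evaluation for all sufficiently large $t$; moreover $\bm A\bm x^\circ + \bm w^\circ = \bm b$ and $\bm s^\circ - \transpose{\bm A}\bm y^\circ = \bm c$ are identities in $\K$ and thus hold after evaluation at every $t$ in the common region of convergence of the series involved. Hence there is $t_1$ with $\bm z^\circ(t) \in \bm\Fcal^\circ(t)$ for all $t > t_1$, which proves~(i). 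Next, applying~\eqref{e-equivpt} to the Puiseux polyhedra $\bm\Pcal$ and $\bm\Qcal$ at the points $(\bm x^*, \bm w^*) \in \bm\Pcal$ and $(\bm s^*, \bm y^*) \in \bm\Qcal$, we obtain $t_2$ such that for all $t > t_2$ the points $(\bm x^*(t), \bm w^*(t))$ and $(\bm s^*(t), \bm y^*(t))$ are feasible for $\text{LP}(\bm A(t), \bm b(t), \bm c(t))$ and $\text{DualLP}(\bm A(t), \bm b(t), \bm c(t))$, respectively.

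It remains to show these points are optimal and that the common optimal value is $\bm\nu(t)$. Over $\K$ one has $\gap(\bm z) = \scalar{\bm x}{\bm s} + \scalar{\bm w}{\bm y} \geq 0$ for every $\bm z \in \bm\Fcal$, as a sum of products of non-negative elements; and by strong duality over $\K$ (Tarski's principle), at the optimal pair the gap vanishes, i.e.\ $\scalar{\bm c}{\bm x^*} + \scalar{\bm b}{\bm y^*} = 0$, with $\bm\nu = \scalar{\bm c}{\bm x^*}$. Evaluating these identities of Puiseux series at any $t$ large enough that the series converge, say $t > t_3$, gives $\scalar{\bm c(t)}{\bm x^*(t)} = \bm\nu(t)$ and $\scalar{\bm c(t)}{\bm x^*(t)} + \scalar{\bm b(t)}{\bm y^*(t)} = 0$. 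Set $t_0 \coloneqq \max(t_1, t_2, t_3)$ and fix $t > t_0$. For any feasible $(x, w)$ of $\text{LP}(\bm A(t), \bm b(t), \bm c(t))$, weak duality over $\R$ applied with the dual-feasible point $(\bm s^*(t), \bm y^*(t))$ yields $\scalar{\bm c(t)}{x} \geq -\scalar{\bm b(t)}{\bm y^*(t)} = \bm\nu(t) = \scalar{\bm c(t)}{\bm x^*(t)}$, so $(\bm x^*(t), \bm w^*(t))$ is optimal with optimal value $\bm\nu(t)$; the symmetric argument, using the primal-feasible point $(\bm x^*(t), \bm w^*(t))$, shows $(\bm s^*(t), \bm y^*(t))$ is optimal with the same value. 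This gives~(ii) and~(iii).

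The only point requiring care — and the place where one could slip — is the uniformity in $t$ flagged in the first paragraph: one must verify that the non-uniform transfer~\eqref{e-equivpt} is used only finitely often, which is why we single out the fixed points $\bm z^\circ, (\bm x^*, \bm w^*), (\bm s^*, \bm y^*)$ and the single series identity $\scalar{\bm c}{\bm x^*} + \scalar{\bm b}{\bm y^*} = 0$. The separate handling of strict feasibility is likewise necessary because $\bm\Fcal^\circ$ is not closed, so~\eqref{e-equivpt} does not cover it directly; this is dispatched by the elementary fact that a positive element of $\K$ evaluates to a positive real for all large $t$. The remaining steps are a routine appeal to weak and strong LP duality.
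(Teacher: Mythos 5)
Your proof is correct and follows essentially the same route as the paper's: transfer finitely many fixed (in)equalities over $\K$ (strict feasibility of one point, feasibility of the lifted optimal pair, and the vanishing of the duality gap) to the reals for $t$ large enough, then conclude by LP duality over $\R$. The paper's version is terser but relies on exactly the same observation that an equality or (strict) inequality between two Puiseux series persists under evaluation for all sufficiently large $t$.
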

\begin{proof}
  For two series $\bm u, \bm v \in \K$ the equality $\bm u = \bm v$ forces $\bm u(t) = \bm v(t)$ for all $t$ large enough.
  A similar statement holds for inequalities like $\bm u \leq \bm v$ and $\bm u < \bm v$.
  We infer that the set $\bm \Fcal^\circ(t)$ is not empty if $t \gg 1$.
  Moreover, $(\bm x^*(t), \bm w^*(t)) \in \Pcal(t)$, $(\bm s^*(t), \bm y^*(t)) \in \Qcal(t)$ and $\scalar{\bm c(t)}{\bm x^*(t)} = \scalar{\bm b(t)}{\bm y^*(t)} = \bm \nu(t)$.
  Since $\text{LP}(\bm A(t), \bm b(t), \bm c(t))$ and $\text{DualLP}(\bm A(t), \bm b(t), \bm c(t))$ are dual to one another, we conclude that $(\bm x^*(t), \bm w^*(t))$ and $(\bm s^*(t), \bm y^*(t))$ form a pair of optimal solutions.
\end{proof}

Throughout the following we will keep that value $t_0$ from Lemma~\ref{lemma:archimedean}.  When $t > t_0$, we know from~Lemma~\ref{lemma:archimedean}\eqref{item:int} that the primal-dual central path of the linear programs $\text{LP}(\bm A(t), \bm b(t), \bm c(t))$ and $\text{DualLP}(\bm A(t), \bm b(t), \bm c(t))$ is well defined. In this case we denote by $\Ccal_t(\mu)$ the point of this central path with parameter $\mu$, where $\mu \in \R$ and $\mu > 0$.  Let us fix the real precision parameter $\theta$ in the open interval from $0$ to~$1$.  Then the set
\[
\Ncal^{-\infty}_{\theta,t}(\mu) \coloneqq 
\Bigl\{ z = (x,w,s,y) \in {\bm \Fcal}^\circ(t) \colon \bar{\mu}(z) = \mu \; \text{and}\;  
\begin{pmatrix}
 x s \\
 w y 
\end{pmatrix} \geq (1 - \theta) \mu e \Bigr\}
\]
is a neighborhood of the point $\Ccal_t(\mu)$.
A direct inspection shows that the union of the sets $\Ncal^{-\infty}_{\theta,t}(\mu)$ for $\mu > 0$ agrees with the wide neighborhood $\Ncal^{-\infty}_\theta$ of the entire central path of the linear program $\text{LP}(\bm A(t), \bm b(t), \bm c(t))$ over $\R$; see \eqref{eq:wide_neighborhood}.
In order to stress the dependence on $t$, we denote this neighborhood by $\Ncal^{-\infty}_{\theta,t}$. With this notation, we have
\[
\Ncal^{-\infty}_{\theta,t} = \bigcup_{\mu > 0} \Ncal^{-\infty}_{\theta,t}(\mu) \, .
\]
Further let
\begin{equation}\label{eq:delta_t}
  \delta(t) \coloneqq 2\hilbert(\log_t \bm \Fcal(t), \Fcal) \, ,
\end{equation}
which, by Theorem~\ref{th:uniform_convergence_polyhedra}, tends to $0$ when $t$ goes to $+\infty$.
The following result states that we can uniformly bound the distance from the image of $\Ncal^{-\infty}_{\theta,t}(\mu)$ under $\log_t$ to the point $ \Ctrop(\log_t \mu)$ of the tropical central path, independently of $\mu$:
\begin{theorem}\label{th:uniform_convergence_central_path}
For all $t > t_0$ and $\mu > 0$ we have
\[
d_\infty\bigl(\log_t \Ncal^{-\infty}_{\theta,t}(\mu), \Ctrop(\log_t \mu) \bigr) \leq \log_t \Bigl(\frac{N}{1-\theta}\Bigr) + \delta(t) \, . 
\]
\end{theorem}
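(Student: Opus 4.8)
The plan is to establish the pointwise bound $d_\infty\bigl(\log_t z,\Ctrop(\lambda)\bigr) \le \log_t(N/(1-\theta)) + \delta(t)$ for every $z = (x,w,s,y) \in \Ncal^{-\infty}_{\theta,t}(\mu)$, where $\lambda \coloneqq \log_t\mu$; since $\Ctrop(\lambda)$ is a single point, taking the supremum over $z$ recovers the directed Hausdorff distance in the statement. Write $\zeta \coloneqq \log_t z$ and $\bar z \coloneqq \Ctrop(\lambda) = (x^\lambda,w^\lambda,s^\lambda,y^\lambda)$, and recall $d_\infty(\zeta,\bar z) = \max\bigl(\funk(\zeta,\bar z),\funk(\bar z,\zeta)\bigr)$, so it suffices to bound each one-sided distance. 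Throughout I will use the elementary consequence of $\mgap(z) = \mu$ that $x_j s_j \le N\mu$ and $w_i y_i \le N\mu$ for all $i,j$ (the $N$ products are non-negative and add up to $N\mu$), equivalently $\log_t x_j + \log_t s_j \le \lambda + \log_t N$ and similarly for the pairs $(w_i,y_i)$.

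To bound $\funk(\bar z,\zeta)$, observe that $z$ being feasible places $\zeta$ in $\log_t\bm\Fcal(t)$, so by~\eqref{eq:delta_t} and Theorem~\ref{th:uniform_convergence_polyhedra} there is $\zeta' \in \Fcal$ with $\hilbert(\zeta,\zeta') \le \delta(t)/2$; setting $a \coloneqq \funk(\zeta',\zeta)$ and $b \coloneqq \funk(\zeta,\zeta')$ we have $a+b \le \delta(t)/2$ and $\zeta'_k \le \zeta_k + b$ for every coordinate $k$. Combining this with the bounds on $\log_t x_j + \log_t s_j$ above gives $\tgap(\zeta') \le \lambda + \log_t N + 2b$, hence $\zeta' \in \Fcal^{\lambda + \log_t N + 2b}$. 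By Proposition~\ref{prop:sublevel} this sublevel set is a bounded tropical polyhedron, so its barycenter is $\Ctrop(\lambda + \log_t N + 2b)$ by Theorem~\ref{th:trop_central_path}, and therefore, using Corollary~\ref{cor:nonexpansive} and $2b+a \le 2(a+b) \le \delta(t)$,
\[
\zeta_k \;\le\; \zeta'_k + a \;\le\; \Ctrop(\lambda + \log_t N + 2b)_k + a \;\le\; \bar z_k + \log_t N + 2b + a \;\le\; \bar z_k + \log_t N + \delta(t)
\]
for all $k$, so that $\funk(\bar z,\zeta) \le \log_t N + \delta(t)$.

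To bound $\funk(\zeta,\bar z)$ the wide neighborhood finally enters: from $z \in \Ncal^{-\infty}_{\theta,t}(\mu)$ we get $x_j s_j \ge (1-\theta)\mu$ and $w_i y_i \ge (1-\theta)\mu$, that is $\log_t x_j + \log_t s_j \ge \lambda - \log_t\tfrac{1}{1-\theta}$ and likewise for $(w_i,y_i)$. Applying the previous step to the $s_j$-coordinate, $\log_t s_j \le s^\lambda_j + \log_t N + \delta(t)$, and hence
\[
\log_t x_j \;\ge\; \Bigl(\lambda - \log_t\tfrac{1}{1-\theta}\Bigr) - \log_t s_j \;\ge\; \bigl(\lambda - s^\lambda_j\bigr) - \log_t\tfrac{1}{1-\theta} - \log_t N - \delta(t) \;=\; x^\lambda_j - \log_t\tfrac{N}{1-\theta} - \delta(t)\,,
\]
where the last equality is the tropical complementarity relation $x^\lambda_j + s^\lambda_j = \lambda$ of~\eqref{eq:trop_csc}. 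The same computation with the roles of $x_j$ and $s_j$ interchanged, and the analogous one for $(w_i,y_i)$, shows $\bar z_k - \zeta_k \le \log_t\tfrac{N}{1-\theta} + \delta(t)$ for every coordinate $k$, i.e.\ $\funk(\zeta,\bar z) \le \log_t\tfrac{N}{1-\theta} + \delta(t)$. Combining the two one-sided bounds (and $\log_t N \le \log_t\tfrac{N}{1-\theta}$) gives the claim.

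The main technical point is the first step: one has to turn ``$z$ feasible'' into ``$\log_t z$ lies within $\delta(t)/2$ of the tropical feasible set, hence within a controlled distance of a sublevel set $\Fcal^{\lambda'}$ with $\lambda'$ only slightly above $\lambda$'', and this must be carried out with the one-sided Funk distance $\funk$ rather than with the symmetric $\hilbert$ or $d_\infty$, together with the non-expansiveness of $\lambda' \mapsto \Ctrop(\lambda')$ from Corollary~\ref{cor:nonexpansive}. A pleasant feature of this route is that it never requires quantitative control of $\log_t\bm x^*(t) - \val\bm x^*$ or of the optimal solutions themselves: the external-representation independence of $\Fcal^\lambda$ and the complementarity pairing~\eqref{eq:trop_csc} do all the remaining work, and the two occurrences of $\log_t N$ (from $x_j s_j \le N\mu$) and the single occurrence of $\log_t\tfrac{1}{1-\theta}$ (from the wide-neighborhood lower bound) combine to exactly $\log_t\tfrac{N}{1-\theta}$.
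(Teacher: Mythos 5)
Your proposal is correct and follows essentially the same route as the paper: the one-sided bound $\log_t z \leq \Ctrop(\lambda) + (\log_t N + \delta(t))e$ is obtained exactly as in the paper's proof (projecting $\log_t z$ onto a nearby point of $\Fcal$, placing it in a sublevel set $\Fcal^{\lambda'}$ with $\lambda'$ slightly above $\lambda$, and invoking the barycenter characterization together with Corollary~\ref{cor:nonexpansive}), and the reverse bound is then derived from the wide-neighborhood condition and the complementarity relation~\eqref{eq:trop_csc}, just as in the paper. The only cosmetic difference is that you pick a single $\zeta'$ realizing the Hausdorff bound $\delta(t)/2$ up front, whereas the paper runs the argument for an arbitrary $z' \in \Fcal$ at finite Hilbert distance and takes the infimum at the end.
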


\begin{proof}
Choose $t > t_0$ and $\mu > 0$, and $z = (x,w,s,y) \in \Ncal^{-\infty}_{\theta,t}(\mu)$.  Letting $\lambda \coloneqq \log_t \mu$ we claim that it suffices to prove that
\begin{equation}\label{eq:upper_bound}
\log_t z \leq \Ctrop(\lambda) + (\log_t N + \delta(t)) e  \, .
\end{equation}
Indeed, by definition of the wide neighborhood $\Ncal^{-\infty}_{\theta,t}(\mu)$, we have $\log_t (x, w)  \geq - \log_t (s, y) + \bigl(\lambda + \log_t (1-\theta)\bigr)e$. Using~\eqref{eq:upper_bound}, we obtain
\[
\log_t (x, w) \geq - (s^\lambda, y^\lambda) + \biggl(\lambda - \log_t \Bigl(\frac{N}{1-\theta}\Bigr) - \delta(t)\biggr)e =  (x^\lambda, w^\lambda) - \biggl(\log_t \Bigl(\frac{N}{1-\theta}\Bigr) + \delta(t)\biggr)e \, ,
\]
where the last equality is due to~\eqref{eq:trop_csc}. Analogously, we can prove that $\log_t (s, y) \geq (s^\lambda, y^\lambda) - \Bigl(\log_t \bigl(\frac{N}{1-\theta}\bigr) + \delta(t)\Bigr) e$.

Now let us show that~\eqref{eq:upper_bound} holds. By definition of the duality measure $\mgap(z)$, we have $\gap(z) = N \mgap(z) = N \mu$. Applying the map $\log_t$ yields
\begin{equation}
\tgap(\log_t z) \leq \log_t \gap(z) =  \lambda + \log_t N \, , \label{eq:uniform_proof1}
\end{equation}
where the inequality is a consequence of the first inequality in~\eqref{eq:maslov}.
 
Let $z' \in \Fcal$ such that $\hilbert(\log_t z, z') < +\infty$. 
Recall that
\begin{equation}\label{eq:funk_sandwich}
  z' - \funk(\log_t z, z') e \leq \log_t z \leq z' + \funk(z', \log_t z) e \, .
\end{equation}
The first inequality in~\eqref{eq:funk_sandwich} gives $\tgap(z') \leq \tgap(\log_t z) + 2\funk(\log_t z, z')$. 
In combination with~\eqref{eq:uniform_proof1} this shows that $z'$ lies in $\Fcal^{\lambda'}$ for $\lambda' \coloneqq \lambda + \log_t N + 2\delta_F(\log_t z, z')$. 
The second inequality in~\eqref{eq:funk_sandwich} now yields
\[
\log_t z \leq z' + \funk(z', \log_t z) e \leq \Ctrop(\lambda') + \funk(z', \log_t z) e  \leq \Ctrop(\lambda) + \bigl(\log_t N + 2 \hilbert(\log_t z, z')\bigr)e  \, ,
\]
where the second inequality follows from $\Ctrop(\lambda')$ being the barycenter of $\Fcal^{\lambda'}$, and the last inequality is a consequence of Corollary~\ref{cor:nonexpansive}. As this argument is valid for all $z' \in \Fcal$ within a finite distance from $\log_t z$ we obtain that $\log_t z \leq \Ctrop(\lambda) + \bigl(\log_t N + \delta(t)\bigr) e$.
\end{proof}

\subsection{Main example}\label{subsec:cex_tropical_central_path}

The family $\realCEX{r}{t}$ of linear programs over the reals from the introduction may also be read as a linear program over the field $\K$, thinking of $t$ as a formal parameter. We denote this linear program by $\puiseuxCEX{r}$. The goal of this section is to obtain a complete description of the corresponding tropical central path.

Introducing slack variables $\bm w_1, \dots, \bm w_{3r-1}$ in the first $3r-1$ inequalities of $\puiseuxCEX{r}$, and adding the redundant inequalities $\bm x_i\geq 0$ for $1\leq i<2r-1$, gives rise to a linear program $\puiseuxCEXslack{r}$, which is of the form $\LP(\bm A, \bm b, \bm c)$ in dimension $N = 5r-1$. (Note that the last two inequalities of $\puiseuxCEX{r}$ are non-negativity constraints, which is why we do not need slack variables for them.)
The dual Puiseux linear program (with slacks) is referred to as $\dualPuiseuxCEXslack{r}$. We retain the notation introduced in Section~\ref{subsec:geometric_characterization}; for instance, we denote by $\bm \Pcal$ and $\bm \Qcal$ the primal and dual feasible sets respectively.

To begin with, we verify that Assumption~\ref{assump:strictly_feasible} is satisfied. Due to the lower triangular nature of the system of inequalities in $\puiseuxCEX{r}$, we can easily find a vector $\bm x$ satisfying every inequality of this system in a strict manner. In other words, we can find $(\bm x, \bm w) \in \bm \Pcal$ such that $\bm x > 0$ and $\bm w > 0$. Moreover, since the feasible set of $\puiseuxCEX{r}$ is bounded, the set $\bm \Pcal$ is bounded as well. This implies that the dual feasible set $\bm \Qcal$ contains a point $(\bm s, \bm y)$ satisfying $\bm s > 0$, $\bm y > 0$. As a result, the set $\bm \Fcal^\circ$ is non-empty. 

We focus on the description of the primal part $\lambda \mapsto (x^\lambda, w^\lambda)$ of the tropical central path, since the dual part can be readily obtained by using the relations~\eqref{eq:trop_csc}. It can be checked that the optimal value of $\puiseuxCEXslack{r}$, and subsequently of $\dualPuiseuxCEXslack{r}$, is equal to $0$. Since in our case, the primal objective vector $\bm c$ is given by the nonnegative vector $(1, 0, \dots, 0) \in \K^n$, we deduce that we can choose the dual optimal solution $(\bm s^*, \bm y^*)$ as $(1, 0, \dots, 0) \in \K^{n+m}$. As a consequence of Theorem~\ref{th:trop_central_path} and Proposition~\ref{prop:sublevel}\eqref{item:sublevel1}, the point $(x^\lambda, w^\lambda)$ on the primal tropical central path agrees with the barycenter of the tropical sublevel set
\begin{equation}\label{eq:primal_sublevel_set}
  \Pcal^\lambda = \{(x,w) \in \Pcal \colon x_1 \leq \lambda \} \, .
\end{equation}
Recall that $\Pcal$ stands for $\val(\bm \Pcal)$.

We first restrict our attention to the $x$-component of the tropical central path.  To this end, let $\bm \Pcalx$ be the projection of the primal feasible set $\bm \Pcal$ onto the coordinates $\bm x_1, \dots, \bm x_{2r}$. This is precisely the feasible set of the Puiseux linear program~$\puiseuxCEX{r}$. Further, let $\Pcalx$ be the image under $\val$ of $\bm\Pcalx$. Equivalently, this is the projection of $\Pcal$ onto $x_1, \dots, x_{2r}$.
We claim that $\Pcalx$ is given by the $3r+1$ tropical linear inequalities
\begin{equation}\label{eq:Pcalx}
\begin{lgathered}
x_1 \leq 2 \, , \; x_2 \leq 1 \\
x_{2j+1} \leq 1 + x_{2j-1} \, , \;  x_{2j+1} \leq 1 + x_{2j} \tikzmark{} \\
x_{2j+2} \leq (1-1/2^j) + \max(x_{2j-1}, x_{2j}) \tikzmark{}
\end{lgathered}
\insertbigbrace{$1 \leq j < r$ \, ,} 
\end{equation}
which are obtained by applying the valuation map to the inequalities in $\puiseuxCEX{r}$ coefficient-wise. While this can be checked by hand, we can also apply~\cite[Corollary~14]{tropical_spectrahedra}, as $\Pcalx$ is a regular set in $\Trop^{2r}$, \ie, it coincides with the closure of its interior.

By~\eqref{eq:primal_sublevel_set} we deduce that the point $x^\lambda$ is the barycenter of the tropical polyhedron $\{ x \in \Pcalx \colon x_1 \leq \lambda \}$. We arrive at the following explicit description of $x^\lambda$.
\begin{proposition}\label{prop:counterexample_tropical_central_path}
For all $\lambda \in \R$, the point $x^\lambda$ is given by the recursion
\begin{align*}
x^\lambda_1 & = \min(\lambda,2) \\
x^\lambda_2 & = 1 \\
x^\lambda_{2j+1} & = 1 + \min(x^\lambda_{2j-1}, x^\lambda_{2j}) \tikzmark{} \\
x^\lambda_{2j+2} & = (1-1/2^j) + \max(x^\lambda_{2j-1}, x^\lambda_{2j}) \tikzmark{} 
\insertbigbrace{$1 \leq j < r$ \, .}
\end{align*}
\end{proposition}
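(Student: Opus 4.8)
The plan is to identify the vector $\xi=(\xi_1,\dots,\xi_{2r})$ given by the stated recursion and to show that it equals the coordinate-wise supremum of the tropical polyhedron
$R_\lambda\coloneqq\{x\in\Pcalx\colon x_1\leq\lambda\}$. By the discussion preceding the proposition (together with Proposition~\ref{prop:sublevel}), $x^\lambda$ is exactly the barycenter of $R_\lambda$, that is, the coordinate-wise maximum of $R_\lambda$; since $R_\lambda$ is a bounded tropical polyhedron (boundedness being inherited from $\Pcal^\lambda$), this barycenter is the unique element of $R_\lambda$ that dominates every element of $R_\lambda$. So it suffices to prove the two statements \textbf{(a)} $\xi\in R_\lambda$ and \textbf{(b)} $x\leq\xi$ coordinate-wise for every $x\in R_\lambda$.

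For \textbf{(a)} I would simply substitute $\xi$ into the $3r+1$ tropical inequalities~\eqref{eq:Pcalx} together with the cut $x_1\leq\lambda$: the bounds $\xi_1\leq 2$, $\xi_1\leq\lambda$, $\xi_2\leq 1$ are immediate from $\xi_1=\min(\lambda,2)$ and $\xi_2=1$; the inequalities $\xi_{2j+1}\leq 1+\xi_{2j-1}$ and $\xi_{2j+1}\leq 1+\xi_{2j}$ hold because $\xi_{2j+1}=1+\min(\xi_{2j-1},\xi_{2j})$; and $\xi_{2j+2}\leq(1-1/2^j)+\max(\xi_{2j-1},\xi_{2j})$ holds with equality. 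In particular $R_\lambda\neq\emptyset$. For \textbf{(b)} I would argue by induction, proving $x_{2j-1}\leq\xi_{2j-1}$ and $x_{2j}\leq\xi_{2j}$ for $j=1,\dots,r$. The base case $j=1$ follows from $x_1\leq 2$, $x_1\leq\lambda$, hence $x_1\leq\min(\lambda,2)=\xi_1$, and from $x_2\leq 1=\xi_2$. For the inductive step, if $x_{2j-1}\leq\xi_{2j-1}$ and $x_{2j}\leq\xi_{2j}$ then, using that $x$ satisfies both inequalities on $x_{2j+1}$ and the monotonicity of $\min$,
\[
x_{2j+1}\leq\min(1+x_{2j-1},\,1+x_{2j})=1+\min(x_{2j-1},x_{2j})\leq 1+\min(\xi_{2j-1},\xi_{2j})=\xi_{2j+1}\,,
\]
and similarly, by monotonicity of $\max$,
\[
x_{2j+2}\leq(1-1/2^j)+\max(x_{2j-1},x_{2j})\leq(1-1/2^j)+\max(\xi_{2j-1},\xi_{2j})=\xi_{2j+2}\,.
\]
Combining \textbf{(a)} and \textbf{(b)}, $\xi$ lies in $R_\lambda$ and dominates it, so $\xi$ is its barycenter, i.e.\ $\xi=x^\lambda$.

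I do not expect any genuine obstacle here: the argument is elementary, essentially a bookkeeping induction exploiting that the defining inequalities are ``lower triangular'' (each pair of new coordinates is bounded only in terms of the previous pair) and that $\min$ and $\max$ are order-preserving. The only point requiring a word of care is the reduction to the barycenter of the \emph{projected} polyhedron $R_\lambda$: projecting onto the $x$-coordinates commutes with taking a coordinate-wise supremum, and the barycenter of the compact tropical polyhedron $\Pcal^\lambda$ lies in $\Pcal^\lambda$, so its $x$-part is the barycenter of $R_\lambda$; this is already invoked in the text around~\eqref{eq:primal_sublevel_set}, so I would just cite it rather than reprove it.
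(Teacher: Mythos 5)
Your proposal is correct and follows essentially the same route as the paper: the paper rewrites the constraints as $x_1\leq\min(\lambda,2)$, $x_2\leq 1$, $(x_{2j+1},x_{2j+2})\leq F_j(x_{2j-1},x_{2j})$ for order-preserving maps $F_j(a,b)=(1+\min(a,b),\,1-1/2^j+\max(a,b))$ and observes that the barycenter is the point attaining equality, which is exactly your feasibility check \textbf{(a)} plus your monotone induction \textbf{(b)} spelled out in full. Your remark on reducing to the barycenter of the projected polyhedron is also handled the same way, via the discussion around~\eqref{eq:primal_sublevel_set}.
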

\begin{proof}
We introduce the family of maps $F_j \colon (a,b) \mapsto (1 + \min(a,b), 1 - 1/2^j + \max(a,b))$ where $1 \leq j < r$.
With this notation, the point $x$ lies in $\Pcalx$ and also satisfies $x_1 \leq \lambda$ if and only if
\begin{equation}\label{eq:Pcalx_dyn}
x_1 \leq \min(\lambda, 2) \, , \quad x_2 \leq 1 \, , \quad (x^\lambda_{2j+1}, x^\lambda_{2j+2}) \leq F_j(x^\lambda_{2j-1}, x^\lambda_{2j}) \, , 
\end{equation}
for every $1 \leq j < r$. Since the maps $F_j$ are order preserving, the barycenter of the tropical polyhedron defined by~\eqref{eq:Pcalx_dyn} is the point which attains equality in~\eqref{eq:Pcalx_dyn}.
\end{proof}
Observe that the map $\lambda \mapsto x^\lambda$ is constant on the interval $[2, \infty[$, while it is linear on $]{-\infty}, 0]$. In constrast, on the remaining interval $[0,2]$, the shape of this map is much more complicated. This is illustrated in Figure~\ref{fig:x_central_path}. 

\begin{figure}[t]
\begin{center}
\begin{tikzpicture}
\begin{scope}
[>=stealth',scale=1.75,curve/.style={black, thick}]
\draw[help lines, gray!40!] (-0.05,-0.05) grid (2.25,5.25); 
\foreach \x in {0,...,2} { \node [anchor=north] at (\x,0) {\x}; }
\foreach \y in {0,...,5} { \node [anchor=east] at (-0.05,\y) {\y}; }

\draw[gray!40!, ->] (-0.05,0) -- (2.25,0) node[black,above right={-5pt and -1pt}] {$\lambda$};

\draw[curve, dashdotted] (-0.1,-0.1) -- (1.5,1.5) node[below right={-5pt and -1pt}] {$x^\lambda_1$} -- (2,2) -- (2.1,2); 
\draw[curve, dashdotted, red] (-.1,1) -- (2.1,1) node[below right={-5pt and -1pt}] {$x^\lambda_2$}; 

\foreach \n in {1,...,4} {
	\pgfmathsetmacro{\range}{pow(2,\n-1)-1}
		\foreach \k in {0,...,\range} {
    		\draw[curve] ( 4 * \k / 2^\n   , \n + 2 * \k / 2^\n + 1 / 2^\n) 
 			-- ( 4 * \k  / 2^\n  + 2 / 2^\n , \n + 2 * \k / 2^\n + 1 / 2^\n)
			-- ( 4 * \k  / 2^\n  + 4 / 2^\n, \n + 2 * \k / 2^\n  + 3 / 2^\n);
 			\draw[curve, red] ( 4 * \k / 2^\n   , \n + 2 * \k / 2^\n) 			--  ( 4 * \k  / 2^\n  + 2 / 2^\n , \n + 2 * \k / 2^\n + 2 / 2^\n )
			--  ( 4 * \k  / 2^\n  + 4 / 2^\n, \n + 2 * \k / 2^\n  + 2 / 2^\n);
   		} 
	\pgfmathsetmacro{\nodd}{int(2*\n+1)}
	\pgfmathsetmacro{\neven}{int(2*(\n+1))}
	\draw[curve,red] (2, \n+1) -- (2.1, \n+1) node[below right={-5pt and -1pt}] {$x^\lambda_{\nodd}$};
	\draw[curve] (2, \n + 1 + 1/2^\n) -- (2.1, \n + 1 + 1/2^\n) node[above right={-5pt and -1pt}] {$x^\lambda_{\neven}$};

	\draw[curve,red] (0, \n) -- (-.1, \n - .1);
	\draw[curve] (0, \n  + 1/2^\n) -- (-.1, \n  + 1/2^\n);
}
\end{scope}
\begin{scope}[shift={(7,1)},>=stealth',scale=1,  curve/.style={blue!50, very thick},tube/.style={color=lightgray, fill opacity = 0.7}]
     \draw[help lines, gray!40!] (0,0) grid (7,7); 
     \draw[gray!40!, ->] (0,0) -- (7.5,0);
     \draw[gray!40!, ->] (0,0) -- (0,7.5);
     \node[anchor = north] at (7.5,0) {$x_{2r-1}$};
     \node[anchor = west] at (0,7.5) {$x_{2r}$};
\node [anchor=north,font=\small] at (0,0) {$r-1$};
\node [anchor=east,font=\small] at (0,0) {$r-1$};
 \foreach \x in {2,4,...,6} { \node [anchor=north,font=\small] at ($(\x,0) +(0.1,0)$) {$\begin{multlined}(r-1)\\+\tfrac{\x}{2^{r-1}}\end{multlined}$}; 
}
 \foreach \y in {1,3,...,8} { \node [anchor=east,font=\small] at ($(0,\y)+(0,-0.1)$) {$\begin{multlined}(r-1)\\+\tfrac{\y}{2^{r-1}}\end{multlined}$}; }

 \draw[dashdotted] (0,0) -- (7,7);

\filldraw[curve] (0,1) circle (1.5pt) node[font=\small,above right] {$\lambda = 0$} -- (2,1) circle (1.5pt) node[font=\small,below right] {$\lambda = \frac{1}{2^{r-1}}$} -- (2,3) circle (1.5pt) node[font=\small, above left] {$\lambda = \frac{2}{2^{r-1}}$} -- (4,3) circle (1.5pt) node[font=\small, below right] {$\lambda = \frac{3}{2^{r-1}}$} -- (4,5) circle (1.5pt) node[font=\small, above left] {$\lambda = \frac{4}{2^{r-1}}$} -- (6,5) circle (1.5pt) node[font=\small, below right] {$\lambda = \frac{5}{2^{r-1}}$} -- (6,7) circle (1.5pt) node[font=\small, right] {$\lambda = \frac{6}{2^{r-1}}$};
\end{scope}
\end{tikzpicture}
\end{center}
\caption{Left: the $x$-components of the primal tropical central path of $\puiseuxCEX{r}$ for $r \geq 5$ and $0 \leq \lambda \leq 2$. Right: the projection of the tropical central path of $\puiseuxCEX{r}$ onto the  $(x_{2r-1}, x_{2r})$-plane.}\label{fig:x_central_path}
\end{figure}
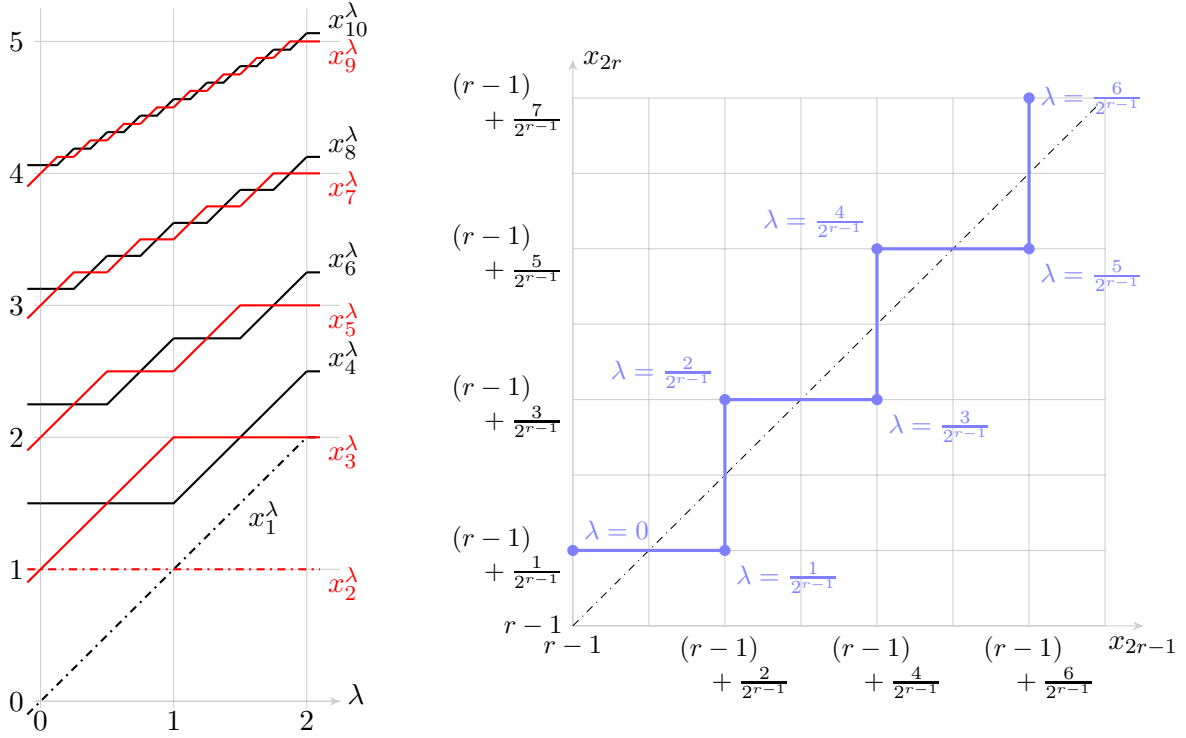

We now incorporate the slack variables $\bm w_1, \dots, \bm w_{3r-1}$ into our analysis.  The points $(\bm x,\bm w)$ in the primal feasible set $\bm \Pcal$ is defined by the following constraints:
\begin{equation}
\begin{aligned}
\bm x_1 + \bm w_1 & = t^2 \\
\bm x_2 + \bm w_2 & = t \\
\bm x_{2j+1} + \bm w_{3j} & = t \, \bm x_{2j-1} \tikzmark{} \\
\bm x_{2j+1} + \bm w_{3j+1} & = t\, \bm x_{2j}  \\
\bm x_{2j+2} + \bm w_{3j+2} & = t^{1-1/2^j} (\bm x_{2j-1} + \bm x_{2j}) \tikzmark{} \\
(\bm x, \bm w) & \in \K^N_+
\end{aligned} \label{eq:primal_feasible_set_with_slack}
\insertbigbrace{$1 \leq j < r$ \, .}
\end{equation}
This entails that the points $(x,w)$ in $\Pcal = \val(\bm \Pcal)$ satisfy the inequalities
\begin{equation}
\begin{aligned}
& w_1 \leq 2 \, , \; w_2 \leq 1 \\
& w_{3j} \leq 1 + x_{2j-1} \, , \;  w_{3j+1} \leq 1 + x_{2j} \tikzmark{} \\
& w_{3j+2} \leq (1-1/2^j) + \max(x_{2j-1}, x_{2j}) \tikzmark{} 
\end{aligned} \label{eq:ineq_slack}
\insertbigbrace{$1 \leq j < r$ \, .}
\end{equation}

The following result states that all these inequalities are tight for all points $(x^\lambda, w^\lambda)$ on the primal tropical central path.
\begin{proposition}\label{prop:central_path_slack}
For all $\lambda \in \R$, the point $w^\lambda$ is described by the following relations:
\begin{equation}
\begin{aligned}
w^\lambda_1 & = 2 \, , \; w^\lambda_2 = 1 \\
w^\lambda_{3j} & = 1 + x^\lambda_{2j-1} \tikzmark{} \\
w^\lambda_{3j+1} & = 1 + x^\lambda_{2j} \\
w^\lambda_{3j+2} & = (1-1/2^j) + \max(x^\lambda_{2j-1}, x^\lambda_{2j}) = x^\lambda_{2j+2} \tikzmark{} 
\end{aligned} \label{eq:central_path_slack}
\insertbigbrace{$1 \leq j < r$}
\end{equation} 
\end{proposition}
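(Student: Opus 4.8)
The plan is to use that, by Theorem~\ref{th:trop_central_path} together with Proposition~\ref{prop:sublevel}\eqref{item:sublevel1} and~\eqref{eq:primal_sublevel_set}, the point $(x^\lambda,w^\lambda)$ is the tropical barycenter, that is, the coordinate-wise supremum, of the bounded tropical polyhedron $\Pcal^\lambda=\{(x,w)\in\Pcal\colon x_1\le\lambda\}$, and that its $x$-component is exactly the point $x^\lambda$ of Proposition~\ref{prop:counterexample_tropical_central_path}. I would then prove each relation in~\eqref{eq:central_path_slack} as two inequalities.

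For the ``$\le$'' part, since $(x^\lambda,w^\lambda)\in\Pcal^\lambda\subseteq\Pcal$, I would simply substitute this point into the inequalities~\eqref{eq:ineq_slack}; combined with the identity $x^\lambda_{2j+2}=(1-1/2^j)+\max(x^\lambda_{2j-1},x^\lambda_{2j})$ coming from Proposition~\ref{prop:counterexample_tropical_central_path}, this gives $w^\lambda_1\le 2$, $w^\lambda_2\le 1$, $w^\lambda_{3j}\le 1+x^\lambda_{2j-1}$, $w^\lambda_{3j+1}\le 1+x^\lambda_{2j}$ and $w^\lambda_{3j+2}\le x^\lambda_{2j+2}$.

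For the reverse inequalities, since the barycenter dominates every point of $\Pcal^\lambda$ coordinate-wise, it is enough to exhibit a single Puiseux point $(\bm x,\bm w)\in\bm\Pcal$ with $\val(\bm x_1)\le\lambda$ whose valuation realizes the right-hand sides of~\eqref{eq:central_path_slack}. I would take the almost-monomial lift $\bm x_k\coloneqq 2^{-k}\,t^{x^\lambda_k}$ for $k\in[2r]$ and let $\bm w$ be determined by the slack equations~\eqref{eq:primal_feasible_set_with_slack}. Two things then have to be checked: (a) that $(\bm x,\bm w)$ is feasible, which (given $\bm x>0$ by construction) reduces to $\bm w>0$ componentwise, so $(\bm x,\bm w)\in\K^N_+$; and (b) that each $\val(\bm w_i)$ equals the corresponding right-hand side in~\eqref{eq:central_path_slack}. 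Both are read off the leading terms of the slack equations: in each of them the two Puiseux expressions being subtracted either have distinct valuations, in which case no cancellation occurs, or they share the same valuation but their leading coefficients are $2^{-k}$ and $2^{-k'}$ with $k<k'$, so the difference of the leading coefficients is strictly positive and a leading term of the expected degree survives. A short case distinction according to whether $x^\lambda_{2j-1}<x^\lambda_{2j}$, $x^\lambda_{2j-1}>x^\lambda_{2j}$, or $x^\lambda_{2j-1}=x^\lambda_{2j}$ is needed here, and the decreasing choice of the coefficients $2^{-k}$ is precisely what makes $\bm x_{2j+1}$ and $\bm x_{2j+2}$ small enough relative to $\bm x_{2j-1}$ and $\bm x_{2j}$ for this to go through (it is exactly the condition ensuring $\bm w_{3j},\bm w_{3j+1},\bm w_{3j+2}>0$). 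Since $\val(\bm x_1)=x^\lambda_1=\min(\lambda,2)\le\lambda$, the valuation $\val(\bm x,\bm w)$ lies in $\Pcal^\lambda$, hence $w^\lambda\ge\val(\bm w)$; together with the ``$\le$'' part this yields equality in every line of~\eqref{eq:central_path_slack}.

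The main obstacle is the bookkeeping in step (b): verifying that no leading-term cancellation spoils the valuation of any of the $3r-1$ slack variables. Each individual computation is elementary, but the three block inequalities $x_{2j+1}\le t\,x_{2j-1}$, $x_{2j+1}\le t\,x_{2j}$ and $x_{2j+2}\le t^{1-1/2^j}(x_{2j-1}+x_{2j})$ must be handled together, and the case analysis on the relative order of $x^\lambda_{2j-1}$ and $x^\lambda_{2j}$ has to be carried out with some care; the strictly decreasing choice of coefficients is what keeps this manageable.
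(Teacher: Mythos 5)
Your proposal is correct and follows essentially the same route as the paper's proof: the upper bound comes from substituting the barycenter into~\eqref{eq:ineq_slack}, and the lower bound from an explicit monomial-type lift with strictly decreasing leading coefficients (the paper uses a common coefficient $\alpha_j$ for each pair $(\bm x_{2j+1},\bm x_{2j+2})$ with $\alpha_0=\tfrac12>\alpha_1>\dots$, while your per-index choice $2^{-k}$ works just as well) followed by the same leading-term cancellation check. No gaps.
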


\begin{proof}
Let $\bar w$ be the element defined by the relations in~\eqref{eq:central_path_slack}. We want to prove that $w^\lambda = \bar w$. By Theorem~\ref{th:trop_central_path} and Proposition~\ref{prop:sublevel}\eqref{item:sublevel1}, it suffices to show that the point $(x^\lambda, \bar w)$ is the barycenter of the tropical polyhedron $\Pcal^\lambda$. Given $(x, w) \in \Pcal$, we have $x \leq x^\lambda$ as $x$ belongs to $\Pcal^\lambda$ and $x^\lambda$ is the barycenter of the latter set. Moreover, $w$ satisfies the inequalities given in~\eqref{eq:ineq_slack}. We deduce that $w \leq \bar w$. 

It now remains to show that $(x^\lambda, \bar w)$ belongs to $\Pcal$, since this immediately leads to $(x^\lambda, \bar w) \in \Pcal^\lambda$. In other terms, we want to find a point $(\bm x, \bm w) \in \bm \Pcal$ such that $\val(\bm x, \bm w) = (x^\lambda, \bar w)$. Let us fix a sequence of positive numbers $\alpha_0 =\frac{1}{2}>\alpha_1>\dots>\alpha_{r-1} >0$.  We claim that letting
\begin{equation}\label{eq:admissible_lift}
\bm x_{2j+1} \coloneqq \alpha_j t^{x_{2j+1}^\lambda} \, , \qquad
\bm x_{2j+2} \coloneqq \alpha_j t^{x_{2j+2}^\lambda} \, , \qquad (0 \leq j < r) 
\end{equation}
and defining $\bm w$ in terms of the equalities in~\eqref{eq:primal_feasible_set_with_slack}, yields such an admissible lift. 

First, observe that $\bm x \geq 0$ and $\val(\bm x) = x^\lambda$.  Second, we have $\bm w_1 = t^2 - \alpha_1 t^{x_1^\lambda}$ and $\bm w_2 = \alpha_0 t$.  Recall that $x_1^\lambda \leq 2$.  Thus, $\bm w_1$ and $\bm w_2$ are non-negative, and they satisfy $\val \bm w_1 = 2$ and $\val \bm w_2 = 1$.  Now, let us consider $j$ for $1\leq j < r$.  We have
\[ 
\bm w_{3j} = t \, \bm x_{2j-1} - \bm x_{2j+1} = \alpha_{j-1} t^{1+x_{2j-1}^\lambda} - \alpha_j t^{x_{2j+1}^\lambda} 
= (\alpha_{j-1}-\alpha_j) t^{\bar w_{3j}} + \alpha_j (t^{\bar w_{3j}}-t^{x_{2j+1}^\lambda}) \,.
\]
As $\bar w_{3j} = 1 + x^\lambda_{2j-1} \geq x_{2j+1}^\lambda$, we have $0\leq t^{\bar w_{3j}}-t^{x_{2j+1}^\lambda} \leq t^{\bar w_{3j}}$, and this gives us $\bm w_{3j} \geq 0$ and $\val \bm w_{3j} = \bar w_{3j}$.  A similar argument shows that $\bm w_{3j+1} \geq 0$ and $\val \bm w_{3j+1} = \bar w_{3j+1}$. 
Finally, we can write
\begin{align*}
\bm w_{3j+2} & = t^{1 - 1/2^j} (\bm x_{2j-1} + \bm x_{2j}) -\bm x_{2j+2} \\
& = 
\begin{cases}
(2 \alpha_{j-1} - \alpha_j) t^{x_{2j+2}^\lambda} & \text{if} \; x_{2j-1}^\lambda = x_{2j}^\lambda \, , \\
(\alpha_{j-1} - \alpha_j) t^{x_{2j+2}^\lambda} + o(t^{x_{2j+2}^\lambda}) & \text{otherwise.}
\end{cases}
\end{align*}
Since $2 \alpha_{j-1} > \alpha_{j-1} > \alpha_j$, we obtain that $\bm w_{3j+2} \geq 0$, and $\val \bm w_{3j+2} = x_{2j+2}^\lambda = \bar w_{3j+2}$.
\end{proof}

Table~\ref{tab:table} gives a summary of the values of the coordinates of the primal tropical central path for specific values of $\lambda$ which we shall use below. 

\begin{table}[tb]
\caption{Coordinates of points on the primal tropical central path of $\puiseuxCEX{r}$ for some specific values of $\lambda$,
  where $1 \leq j < r$ and $k = 0, 2, \dots, 2^{j-1}-2$.}
\label{tab:table}
\vskip-0.5cm
\[
\renewcommand{\arraystretch}{1.75}
\begin{array}{c@{\quad}c@{\quad}c@{\quad}c@{\quad}c@{\quad}c}
\toprule
\lambda & \frac{4k}{2^j} & \frac{4k+2}{2^j} & \frac{4k+4}{2^j} & \frac{4k+6}{2^j} & \frac{4k+8}{2^j} \\
\midrule
x_{2j+1} & j + \frac{2k}{2^j} & j + \frac{2k+2}{2^j} & j + \frac{2k+2}{2^j} & j + \frac{2k+4}{2^j} & j + \frac{2k+4}{2^j} \\
x_{2j+2} & j + \frac{2k+1}{2^j} & j + \frac{2k+1}{2^j} & j + \frac{2k+3}{2^j} & j + \frac{2k+3}{2^j} & j + \frac{2k+5}{2^j} \\
w_{3j} &  j + \frac{2k}{2^j} & j + \frac{2k+2}{2^j} & j + \frac{2k+4}{2^j} & j + \frac{2k+4}{2^j} & j + \frac{2k+4}{2^j} \\
w_{3j+1} & j + \frac{2k+2}{2^j} & j + \frac{2k+2}{2^j} & j + \frac{2k+2}{2^j} & j + \frac{2k+4}{2^j} & j + \frac{2k+6}{2^j} \\
w_{3j+2} & j + \frac{2k+1}{2^j} & j + \frac{2k+1}{2^j} & j + \frac{2k+3}{2^j} & j + \frac{2k+3}{2^j} & j + \frac{2k+5}{2^j} \\
\bottomrule
\end{array}
\]
\end{table}

\section{Curvature Analysis}\label{sec:curvature}
\noindent
The purpose of this section is to show how the combinatorial analysis of the tropical central path translates into lower bounds on the total curvature of the central path of a parametric family of linear programs over the reals.  Our main application will be a detailed version of Theorem~\ref{thm:curvature:intro} from the introduction, and a proof of this result.

Let us recall some basic facts concerning total curvature. For two non-null vectors $x, y \in \R^d$ we denote by $\angle xy$ the measure $\alpha \in [0,\pi]$ of the angle of the vectors $x$ and $y$, so that
\[
\cos \alpha = \frac{\scalar{x}{y}}{\norm{x} \norm{y}} \, ,
\]
where $\norm{\cdot}$ refers to the Euclidean norm.  Given three points $U, V, W \in \R^d$ such that $U \neq V$ and $V \neq W$, we extend this notation to write $\angle U V W$ for the angle formed by the vectors $U V$ and $V W$. 
If $\tau$ is a polygonal curve in $\R^d$ parameterized over an interval $[a,b]$, the \emph{total curvature} $\kappa(\tau,[a,b])$ is defined as the sum of angles between the consecutive segments of the curve. More generally, the total curvature $\kappa(\sigma,[a,b])$ can be defined for an arbitrary curve $\sigma$, parameterized over the same interval, as the supremum of $\kappa(\tau,[a,b])$ over all polygonal curves $\tau$ inscribed in $\sigma$. If $\sigma$ is twice continuously differentiable, this coincides with the standard definition of the total curvature $\int_a^b \norm{\kappa''(s)}ds$, when $\kappa$ is parameterized by arc length; see~\cite[Chapter~V]{nonsmoothpaths} for more background.

Our approach is based on estimating the curvature of the central path using approximations by polygonal curves.  Our first observation is concerned with limits of angles between families of vectors arising from vectors over $\K$. 
\begin{lemma}\label{lemma:angle}
Let $\bm x, \bm y$ be two non-null vectors in $\K^d$, and let $x \coloneqq \val(\bm x)$ and $y \coloneqq \val(\bm y)$. Then the limit of $\angle\bm x(t) \bm y(t)$ for $t\to+\infty$ exists.  Moreover, if the sets $\argmax_{i \in [d]} x_i$ and $\argmax_{i \in [d]} y_i$ are disjoint, then we have
\[
\lim_{t \to +\infty} \angle\bm x(t) \bm y(t) = \frac{\pi}{2} \, .
\]
\end{lemma}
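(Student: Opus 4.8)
Throughout write $\cos\angle\bm x(t)\bm y(t)=\scalar{\bm x(t)}{\bm y(t)}\bigl(\norm{\bm x(t)}\,\norm{\bm y(t)}\bigr)^{-1}$. The plan is to recognize this ratio as the evaluation at $t$ of a single element of $\K$ of non-positive valuation, and then to use that every such element converges pointwise as $t\to+\infty$, together with the continuity of $\arccos$.

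Since $\bm x$ and $\bm y$ are non-null, the series $\scalar{\bm x}{\bm x}=\sum_i\bm x_i^2$ and $\scalar{\bm y}{\bm y}=\sum_i\bm y_i^2$ are sums of squares, hence positive in the ordered field $\K$; their leading terms cannot cancel, so $\val(\scalar{\bm x}{\bm x})=2\max_i x_i$ and $\val(\scalar{\bm y}{\bm y})=2\max_i y_i$. As $\K$ is real closed, these positive elements admit (positive) square roots $\sqrt{\scalar{\bm x}{\bm x}},\sqrt{\scalar{\bm y}{\bm y}}\in\K$, with valuations $\max_i x_i$ and $\max_i y_i$, and for $t$ large $\sqrt{\scalar{\bm x}{\bm x}}(t)=\norm{\bm x(t)}$, $\sqrt{\scalar{\bm y}{\bm y}}(t)=\norm{\bm y(t)}$. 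Hence $\cos\angle\bm x(t)\bm y(t)=\bm g(t)$ for $t$ large, where
\[
\bm g\coloneqq\frac{\scalar{\bm x}{\bm y}}{\sqrt{\scalar{\bm x}{\bm x}}\,\sqrt{\scalar{\bm y}{\bm y}}}\in\K\,.
\]
By~\eqref{e-morphism}, $\val(\scalar{\bm x}{\bm y})\le\max_i(x_i+y_i)\le\max_i x_i+\max_i y_i$, so $\val(\bm g)\le0$. Now any $\bm f=\sum_\alpha a_\alpha t^\alpha\in\K$ with $\val(\bm f)\le0$ converges as $t\to+\infty$, with limit equal to the coefficient of $t^0$ in $\bm f$: writing the support as a decreasing sequence $\alpha_0>\alpha_1>\cdots$ and using the built-in absolute convergence, a dominated-convergence estimate on the tail $\sum_{k\ge1}a_{\alpha_k}t^{\alpha_k}$ (bounded by $\sum_k|a_{\alpha_k}|t_1^{\alpha_k}$ for a fixed $t_1$ above the radius of convergence) shows the lower-order terms vanish. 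Applying this to $\bm g$, the quantity $\cos\angle\bm x(t)\bm y(t)=\bm g(t)$ converges to some $L\in[-1,1]$, so $\angle\bm x(t)\bm y(t)=\arccos\bm g(t)\to\arccos L$ by continuity of $\arccos$; this proves the first assertion.

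For the second assertion, assume $\argmax_{i\in[d]}x_i$ and $\argmax_{i\in[d]}y_i$ are disjoint. Then for every $i\in[d]$ at least one of $x_i\le\max_k x_k$, $y_i\le\max_k y_k$ is strict, so $x_i+y_i<\max_k x_k+\max_k y_k$; taking the maximum over $i$ and using~\eqref{e-morphism} gives $\val(\scalar{\bm x}{\bm y})<\max_k x_k+\max_k y_k$, i.e.\ $\val(\bm g)<0$ (the case $\scalar{\bm x}{\bm y}=0$ is included, with $\bm g=0$). By the previous paragraph $\cos\angle\bm x(t)\bm y(t)\to0$, whence $\angle\bm x(t)\bm y(t)\to\pi/2$.

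The main technical point is the pointwise convergence at $+\infty$ of a series of $\K$ with non-positive valuation: this is precisely where the absolute-convergence condition in the definition of $\K$ enters, via the routine dominated-convergence bound on the tail. The only other delicate (but harmless) points are the components with valuation $-\infty$, i.e.\ zero components of $\bm x$ or $\bm y$, and the degenerate case $\scalar{\bm x}{\bm y}=0$; both are absorbed by the conventions already fixed and require no extra argument. An alternative route avoiding real-closedness is to work with $\cos^2\angle\bm x(t)\bm y(t)=\scalar{\bm x}{\bm y}^2\,\bigl(\scalar{\bm x}{\bm x}\scalar{\bm y}{\bm y}\bigr)^{-1}(t)$, which is already the evaluation of an element of $\K$, and to recover the sign of $\cos\angle\bm x(t)\bm y(t)$ from the (eventually constant) sign of the leading coefficient of $\scalar{\bm x}{\bm y}$; I expect the version above to read more cleanly.
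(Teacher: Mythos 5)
Your proposal is correct and follows essentially the same route as the paper: both express $\cos\angle\bm x(t)\bm y(t)$ as the evaluation of a single element $\bm g\in\K$ (using real-closedness to take square roots of the sums of squares), bound $\val(\bm g)\leq \max_i(x_i+y_i)-(\max_i x_i+\max_i y_i)\leq 0$, and observe that disjointness of the argmax sets makes this inequality strict, forcing the cosine to tend to $0$. The only difference is that you spell out the tail estimate justifying pointwise convergence of a series of non-positive valuation, which the paper leaves implicit in the expansion $\bm g(t)=ct^\alpha+o(t^\alpha)$.
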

\begin{proof}
Since the field $\mathbb{K}$ is real closed, the Euclidean norm $\norm{\cdot}$ can be extended to a function from $\K^d$ to $\K$ by $\norm{\bm u} \coloneqq \sqrt{\sum_i \bm u_i^2}$ for all $\bm u \in \K^d$. As a consequence, the quotient $\scalar{\bm x}{\bm y}/\bigl(\norm{\bm x} \norm{\bm y}\bigr)$ is an element of $\K$. Let $\alpha$ be its valuation. We obtain
\[
\alpha \leq \max_{i \in [d]} (x_i + y_i) - \bigl(\max_{i \in [d]} x_i + \max_{i \in [d]} y_i\bigr) \leq 0 \, .
\]
Suppose without loss of generality that
$\scalar{\bm x}{\bm y}\neq 0$.
Then, there exists a non-zero number $c \in \R$ such that
\[
\scalar{\bm x(t)}{\bm y(t)}/\bigl(\norm{\bm x(t)} \norm{\bm y(t)}\bigr) = c t^\alpha + o(t^\alpha)
\]
when $t \to +\infty$.
If $\argmax_{i \in [d]} x_i \cap \argmax_{i \in [d]} y_i = \emptyset$, then $\alpha < 0$, implying that the limit of 
$\cos\angle x(t)y(t)=\scalar{\bm x(t)}{\bm y(t)}/\bigl(\norm{\bm x(t)} \norm{\bm y(t)}\bigr)$ as $t\to+\infty$ is equal to $0$.
\end{proof}

We will use Lemma~\ref{lemma:angle} in order to estimate the limit when $t \to +\infty$ of the angle between segments formed by triplets of successive points of the tropical central path. One remarkable property is that the tropical central path of any Puiseux linear program is monotone; see Proposition~\ref{prop:piecewise}.  We refine Lemma~\ref{lemma:angle} to fit this setting.
\begin{lemma}\label{lemma:angle2}
Let $\bm U, \bm V, \bm W \in \K^d$, and $U \coloneqq \val(\bm U)$, $V \coloneqq \val(\bm V)$ and $W \coloneqq \val(\bm W)$. If $\max_{i \in [d]} U_i < \max_{i \in [d]} V_i < \max_{i \in [d]} W_i$, and the sets $\argmax_{i \in [d]} V_i$ and $\argmax_{i \in [d]} W_i$ are disjoint, we have
\[
\lim_{t \to +\infty} \angle\bm U(t) \bm V(t) \bm W(t) = \frac{\pi}{2} \, .
\]
\end{lemma}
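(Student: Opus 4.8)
The plan is to reduce Lemma~\ref{lemma:angle2} to Lemma~\ref{lemma:angle}. Set $\bm x \coloneqq \bm V - \bm U$ and $\bm y \coloneqq \bm W - \bm V$, with coordinatewise valuations $x \coloneqq \val(\bm x)$ and $y \coloneqq \val(\bm y)$. By definition the geometric angle $\angle\bm U(t)\bm V(t)\bm W(t)$ is the angle between the vectors $\bm V(t)-\bm U(t) = \bm x(t)$ and $\bm W(t)-\bm V(t) = \bm y(t)$, so it coincides with $\angle\bm x(t)\bm y(t)$ for every $t$ at which both vectors are nonzero. Hence it suffices to check that $\bm x$ and $\bm y$ are non-null vectors of $\K^d$ whose valuations have disjoint argmax sets, and then invoke Lemma~\ref{lemma:angle}.

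First I would observe that, since $\max_i U_i < \max_i V_i < \max_i W_i$ is a chain of inequalities in $\Trop = \R\cup\{-\infty\}$, both $\max_i V_i$ and $\max_i W_i$ are real numbers. For any index $i_0\in\argmax_{i\in[d]}V_i$ we then have $U_{i_0}\le\max_i U_i<\max_i V_i = V_{i_0}$, so the highest-degree term of $\bm V_{i_0}$ is not cancelled in $\bm V_{i_0}-\bm U_{i_0}$; by the remark following~\eqref{e-morphism} this gives $\val(\bm x_{i_0}) = V_{i_0} = \max_i V_i$, which in particular shows $\bm x\neq 0$. The symmetric argument, using $\max_i V_i < \max_i W_i$, shows $\bm y\neq 0$. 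Consequently $\bm U(t)\neq\bm V(t)$ and $\bm V(t)\neq\bm W(t)$ for all $t$ large enough, so there $\angle\bm U(t)\bm V(t)\bm W(t) = \angle\bm x(t)\bm y(t)$.

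The core computation is the identification of $\argmax_i x_i$. On the one hand, the ultrametric inequality in~\eqref{e-morphism} gives $x_i\le\max(V_i,U_i)\le\max_k V_k$ for every $i$, using $\max_k U_k<\max_k V_k$; together with the previous paragraph this yields $\max_i x_i = \max_k V_k$. On the other hand, if $x_i=\max_k V_k$ then, since $U_i\le\max_k U_k<\max_k V_k = x_i$ rules out $x_i = U_i$, we must have $V_i\ge x_i$, hence $V_i=\max_k V_k$; combined with the previous paragraph this proves $\argmax_i x_i = \argmax_k V_k$. The identical reasoning with $(\bm W,\bm V)$ in place of $(\bm V,\bm U)$ gives $\argmax_i y_i = \argmax_k W_k$. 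By hypothesis $\argmax_k V_k$ and $\argmax_k W_k$ are disjoint, so $\argmax_i x_i\cap\argmax_i y_i = \emptyset$, and Lemma~\ref{lemma:angle} applied to the non-null vectors $\bm x,\bm y$ gives $\lim_{t\to+\infty}\angle\bm x(t)\bm y(t) = \pi/2$, which is the claim.

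I do not expect a genuine obstacle: the whole argument is elementary once Lemma~\ref{lemma:angle} is on hand. The one point demanding care is the bookkeeping about when~\eqref{e-morphism} holds with equality — namely that no cancellation of leading terms occurs at the indices realizing $\max_k V_k$ and $\max_k W_k$, which is precisely what the strict inequalities $\max_i U_i<\max_i V_i<\max_i W_i$ guarantee — together with the (purely definitional) identification of $\angle\bm U(t)\bm V(t)\bm W(t)$ with the vector angle $\angle\bm x(t)\bm y(t)$ that Lemma~\ref{lemma:angle} controls.
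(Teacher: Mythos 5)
Your proof is correct and follows essentially the same route as the paper's: both arguments reduce to Lemma~\ref{lemma:angle} by showing that the strict chain $\max_i U_i<\max_i V_i<\max_i W_i$ prevents cancellation of leading terms, so that $\argmax_i\val(\bm V_i-\bm U_i)=\argmax_i V_i$ and $\argmax_i\val(\bm W_i-\bm V_i)=\argmax_i W_i$, which are disjoint by hypothesis. Your version merely adds the (correct, implicit in the paper) checks that $\bm V-\bm U$ and $\bm W-\bm V$ are non-null and that the angle is well defined for large $t$.
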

\begin{proof}
Let us remark that for all $i \in [d]$, we have $\val(\bm V_i - \bm U_i) \leq \max(U_i, V_i)$, and this inequality is an equality if $U_i \neq V_i$. Since $\max_{i \in [d]} U_i < \max_{i \in [d]} V_i$, we deduce that $\max_{i \in [d]} \val(\bm V_i - \bm U_i) = \max_{i \in [d]} V_i$, and that the argument of the two maxima are equal. The same applies to the coordinates of the vector $\val(\bm W - \bm V)$. We infer from Lemma~\ref{lemma:angle} that $\angle\bm U(t) \bm V(t) \bm W(t)$ tends to $\pi / 2$ whenever $\argmax_{i \in [d]} V_i \cap \argmax_{i \in [d]} W_i = \emptyset$.
\end{proof}

This motivates us to introduce a \emph{(weak) tropical angle}
\[
\angle^* U V W \coloneqq \begin{cases} \frac{\pi}{2} & \text{if $U, V, W$ satisfy the conditions of Lemma~\ref{lemma:angle2}} \, , \\ 0 & \text{otherwise} \end{cases}
\]
for any three points $U, V, W \in \Trop^d$.
We now consider a Puiseux linear program of the form $\textbf{LP}(\bm A, \bm b, \bm c)$, together with the associated family of linear programs $\text{LP}(\bm A(t), \bm b(t), \bm c(t))$ and their primal-dual central path $\Ccal_t$. We obtain that:
\begin{proposition}\label{prop:curvature}
Let $\final, \initial \in \R$ and $\lambda_0 = \final < \lambda_1 < \dots < \lambda_{p-1} < \lambda_p = \initial$. Then
\[
\liminf_{t\to\infty}\kappa\Bigl(\cpath_t,\bigl[t^{\final}, t^{\initial}\bigr]\Bigr) \geq \sum_{k = 1}^{p-1} \angle^* \troppath(\lambda_{k-1}) \troppath(\lambda_k) \troppath(\lambda_{k+1}) \enspace.
\] 
\end{proposition}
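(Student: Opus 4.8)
The plan is to bound $\kappa(\cpath_t,[t^{\final},t^{\initial}])$ from below by the total curvature of a suitably chosen inscribed polygonal curve, and then to let $t\to\infty$. For $t$ large enough, Lemma~\ref{lemma:archimedean} guarantees that the real central path $\mu\mapsto\cpath_t(\mu)$ of $\text{LP}(\bm A(t),\bm b(t),\bm c(t))$ is well defined, and since $\final=\lambda_0\le\lambda_k\le\lambda_p=\initial$ and $\mu\mapsto\log_t\mu$ is increasing, the points $\cpath_t(t^{\lambda_0}),\cpath_t(t^{\lambda_1}),\dots,\cpath_t(t^{\lambda_p})$ lie on this curve in this order. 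They therefore form an inscribed polygonal curve, so by the definition of total curvature as a supremum over inscribed polygonal curves,
\[
\kappa\bigl(\cpath_t,[t^{\final},t^{\initial}]\bigr)\ \ge\ \sum_{k=1}^{p-1}\angle\,\cpath_t(t^{\lambda_{k-1}})\,\cpath_t(t^{\lambda_k})\,\cpath_t(t^{\lambda_{k+1}})
\]
for all $t$ large enough. It then remains to compute the $\liminf$ of the right-hand side.

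The key step is to identify the breakpoints $\cpath_t(t^{\lambda_k})$ with specializations of the Puiseux central path. Fix $k$ and put $\bm\mu_k\coloneqq t^{\lambda_k}\in\K$, a monomial with $\val(\bm\mu_k)=\lambda_k$, and $\bm V^k\coloneqq\bm\Ccal(\bm\mu_k)\in\K^{2N}$. Since $\bm V^k$ solves the system~\eqref{eq:puiseux_central_path} over $\K$ with parameter $\bm\mu_k$, its specialization $\bm V^k(t)$ solves the real system~\eqref{eq:classical_central_path} for $\text{LP}(\bm A(t),\bm b(t),\bm c(t))$ with parameter $t^{\lambda_k}$, and is positive, for $t$ large enough; by uniqueness of the solution of~\eqref{eq:classical_central_path} (valid for $t>t_0$), this forces $\cpath_t(t^{\lambda_k})=\bm V^k(t)$. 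Moreover $\val(\bm V^k)=\troppath(\lambda_k)$ by Theorem~\ref{th:trop_central_path} and the definition of the tropical central path. Finally, the vectors $\bm V^k$ are pairwise distinct, because the duality measure of $\bm\Ccal(\bm\mu)$ equals $\bm\mu$ and the $\bm\mu_k$ are distinct, so the angles above are well defined for $t$ large.

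To conclude, fix $k\in\{1,\dots,p-1\}$ and apply Lemma~\ref{lemma:angle} to the pair of non-null vectors $\bm V^k-\bm V^{k-1}$ and $\bm V^{k+1}-\bm V^k$: this shows that $\lim_{t\to\infty}\angle\,\bm V^{k-1}(t)\,\bm V^k(t)\,\bm V^{k+1}(t)$ exists. If the triple $\troppath(\lambda_{k-1}),\troppath(\lambda_k),\troppath(\lambda_{k+1})$ satisfies the hypotheses of Lemma~\ref{lemma:angle2}, then that lemma gives that this limit equals $\pi/2=\angle^*\troppath(\lambda_{k-1})\troppath(\lambda_k)\troppath(\lambda_{k+1})$; otherwise $\angle^*\troppath(\lambda_{k-1})\troppath(\lambda_k)\troppath(\lambda_{k+1})=0$, which is dominated by the nonnegative limit. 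In either case the limit is at least $\angle^*\troppath(\lambda_{k-1})\troppath(\lambda_k)\troppath(\lambda_{k+1})$. Since a finite sum of convergent sequences converges to the sum of the limits, passing to the $\liminf$ in the displayed inequality yields the asserted bound.

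The step requiring the most care is the transfer in the middle paragraph: one must check that specialization at $t$ carries the central path system over $\K$ to its real counterpart, including the strict positivity, so that the uniqueness of the real central path point applies; everything else is bookkeeping around Lemmas~\ref{lemma:angle} and~\ref{lemma:angle2}.
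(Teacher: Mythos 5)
Your proposal is correct and follows essentially the same route as the paper: identify $\cpath_t(t^{\lambda_k})$ with the specialization of the Puiseux central-path point $\bm\Ccal(t^{\lambda_k})$ via uniqueness of the solution of~\eqref{eq:classical_central_path}, bound the total curvature from below by the angles of the inscribed polygon, and pass to the limit using Lemmas~\ref{lemma:angle} and~\ref{lemma:angle2}. Your explicit case split on whether the hypotheses of Lemma~\ref{lemma:angle2} hold (with $\angle^*=0$ trivially dominated otherwise) and the verification that consecutive points are distinct are fine elaborations of the same argument.
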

\begin{proof}
Let $\lambda \in \R$, and let $\bm U$ be the point of the central path of $\textbf{LP}(\bm A, \bm b, \bm c)$ with parameter $\bm \mu$ equal to the Puiseux series $t^\lambda$. If $t$ is substituted by a sufficiently large real number, the points $\Ccal_t(t^\lambda)$ and $\bm U(t)$ are identical, since both satisfy the constraints given in~\eqref{eq:classical_central_path} for $A = \bm A(t)$, $b = \bm b(t)$, $c = \bm c(t)$ and $\mu = t^\lambda$. 

The monotonicity of the tropical central path shown in Proposition~\ref{prop:piecewise} allows to apply Lemma~\ref{lemma:angle2}. From the previous discussion, we get that
\[
\lim_{t\to\infty} \angle \Ccal_t(t^{\lambda_{k-1}}) \Ccal_t(t^{\lambda_k}) \Ccal_t(t^{\lambda_{k+1}}) \geq \angle^* \troppath(\lambda_{k-1}) \troppath(\lambda_k) \troppath(\lambda_{k+1}) \, .
\]
for all $k \in [p-1]$.
Since the total curvature can be approximated from below by measuring angles of polygonal paths we obtain $\kappa\bigl(\cpath_t,[t^{\final}, t^{\initial}]\bigr) \geq \sum_{k = 1}^{p-1} \angle \Ccal_t(t^{\lambda_{k-1}}) \Ccal_t(t^{\lambda_k}) \Ccal_t(t^{\lambda_{k+1}})$.
\end{proof}

We denote by $\realCEXslack{r}{t}$ and $\dualRealCEXslack{r}{t}$ the linear programs over $\R$ obtained by substituting the parameter $t$ with a real value in the Puiseux linear programs $\puiseuxCEXslack{r}$ and $\dualPuiseuxCEXslack{r}$, respectively.
We are now ready to state and prove the following 
detailed version of Theorem~\ref{thm:curvature:intro}.

\begin{theorem}\label{th:curvature}
For all $\epsilon>0$, 
  the total curvature of the primal central path of the linear program $\realCEXslack{r}{t}$ is greater than 
$(2^{r-2}-1)\frac{\pi}{2}-\epsilon$, provided that $t>0$ is  sufficiently large.  
Moreover, the same holds for the primal-dual central path.
\end{theorem}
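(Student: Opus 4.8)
The plan is to derive the bound from Proposition~\ref{prop:curvature} applied to $\puiseuxCEXslack{r}$, using the complete description of the tropical central path obtained in Section~\ref{subsec:cex_tropical_central_path}. First, I observe that Proposition~\ref{prop:curvature} and its proof carry over verbatim to the \emph{primal} central path and to the primal tropical central path $\lambda \mapsto (x^\lambda, w^\lambda)$, since Lemmas~\ref{lemma:angle} and~\ref{lemma:angle2} are stated for arbitrary vectors over $\K$. Hence it suffices to exhibit real numbers $\final = \lambda_0 < \lambda_1 < \dots < \lambda_p = \initial$ such that at least $2^{r-2}-1$ of the consecutive triples $\bigl((x^{\lambda_{k-1}}, w^{\lambda_{k-1}}), (x^{\lambda_k}, w^{\lambda_k}), (x^{\lambda_{k+1}}, w^{\lambda_{k+1}})\bigr)$ satisfy the hypotheses of Lemma~\ref{lemma:angle2}, i.e.\ have weak tropical angle $\pi/2$; then
\[
\liminf_{t \to \infty} \kappa\bigl(\cpath_t, [t^{\final}, t^{\initial}]\bigr) \ \geq\ (2^{r-2}-1)\tfrac{\pi}{2} \, ,
\]
and since the total curvature of the whole primal central path dominates that of the sub-arc over $[t^{\final}, t^{\initial}]$, for $t$ large enough it exceeds $(2^{r-2}-1)\tfrac{\pi}{2} - \epsilon$.

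To choose the $\lambda_k$, I use Proposition~\ref{prop:counterexample_tropical_central_path}, Proposition~\ref{prop:central_path_slack} and Table~\ref{tab:table}. On the interval $[0,2]$ the projection of the primal tropical central path onto the $(x_{2r-1}, x_{2r})$-plane is a monotone staircase with breakpoints at $\lambda = \tfrac{2m}{2^{r-1}}$ for $0 \le m \le 2^{r-1}$, whose successive segments alternate between the $x_{2r-1}$- and the $x_{2r}$-directions (the right panel of Figure~\ref{fig:x_central_path}); it therefore has $2^{r-1}-1$ turns. At each such breakpoint the coordinate-wise maximum of $(x^\lambda, w^\lambda)$ over all $N$ coordinates is attained on one of the ``upper-level'' coordinates $\{x_{2r-1}, x_{2r}, w_{3r-3}, w_{3r-2}, w_{3r-1}\}$, and moving from one breakpoint to the next strictly increases this maximum. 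Reading off from Table~\ref{tab:table} exactly which upper-level coordinate carries the maximum at each breakpoint, one checks that the set of maximizers changes to a disjoint one for at least every second turn, so that at least $2^{r-2}-1$ of the $2^{r-1}-1$ turns fulfil the hypotheses of Lemma~\ref{lemma:angle2}. This produces the desired chain $(\lambda_k)$.

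For the primal--dual statement I use~\eqref{eq:trop_csc}: along the tropical central path $s^\lambda_j = \lambda - x^\lambda_j$ and $y^\lambda_i = \lambda - w^\lambda_i$, and since all $x^\lambda_j, w^\lambda_i$ are non-negative for $\lambda \in [0,2]$, every dual coordinate is at most $\lambda \le 2$, whereas the primal maximum is at least $r-1$. Hence for $r$ large the coordinate-wise maximum of the full point $\Ctrop(\lambda) \in \Trop^{2N}$, together with its set of maximizers, coincides with those of its primal part at every $\lambda_k$; consequently the same triples satisfy the hypotheses of Lemma~\ref{lemma:angle2} in $\Trop^{2N}$, so Proposition~\ref{prop:curvature} in its original form (for $\cpath_t$) yields the same lower bound for the primal--dual central path. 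The cases $r \le 2$ are immediate since then $(2^{r-2}-1)\tfrac{\pi}{2} \le 0$.

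The main obstacle is the bookkeeping in the second paragraph. The recursion of Table~\ref{tab:table} is parametrized differently at each level $j$ (the step in $\lambda$ being $2/2^{j}$, and $k$ ranging over even integers up to $2^{j-1}-2$), so identifying, uniformly in $r$, which single coordinate realizes the global maximum of $(x^\lambda, w^\lambda)$ at a given breakpoint, and verifying that consecutive maximizer sets are disjoint while the maximum strictly increases, requires a careful, though elementary, case analysis matching up the parametrizations across levels. Everything else --- the reduction via Proposition~\ref{prop:curvature}, the monotonicity of the tropical central path from Proposition~\ref{prop:piecewise}, and the passage to the limit $t \to \infty$ --- is routine given the results already established.
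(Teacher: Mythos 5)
Your proposal is correct and follows essentially the same route as the paper: apply Proposition~\ref{prop:curvature} to the subdivision of $[0,2]$ by the breakpoints of the tropical central path, use \eqref{eq:trop_csc} to show the dual components are dominated by the primal ones, and read off the alternating maximizers from Table~\ref{tab:table}. The only step you defer (``one checks that the set of maximizers changes to a disjoint one for at least every second turn'') is exactly what the paper extracts from Table~\ref{tab:table} by taking $\lambda_k = \frac{4k}{2^{r-1}}$ for $k=0,\dots,2^{r-2}$: the maximum equals $r-1+\frac{2k+2}{2^{r-1}}$ and is uniquely attained by $w_{3(r-1)}$ for $k$ odd and by $w_{3(r-1)+1}$ for $k$ even, so every interior $\lambda_k$ contributes a weak tropical angle of $\frac{\pi}{2}$.
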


\begin{proof}
We will use Proposition~\ref{prop:curvature} to provide a lower bound on $\liminf_{t\to\infty} \kappa(\Ccal_t, [0,2])$ by considering the subdivision of the closed interval $[0,2]$ by the scalars $\lambda_k = \frac{4 k}{2^{r-1}}$ for $k = 0, 1, \dots, 2^{r-2}$.

Let us first point out that, given $\lambda \in [0,2]$, all the dual components of the point $\troppath(\lambda)$ of the tropical central path are less than or equal to $\max(0,\lambda - 1)$. This is a consequence of the identity~\eqref{eq:trop_csc} and the fact that all the primal components are greater than or equal to $\min(1,\lambda)$; see Proposition~\ref{prop:counterexample_tropical_central_path} and~\ref{prop:central_path_slack}. It follows that the dual components are dominated by the primal ones, and thus it suffices to estimate the total curvature of the primal central path.

Using Table~\ref{tab:table}, we deduce that the maximal component of the vector $\troppath(\lambda_k)$ is equal to $r-1 + \frac{2k+2}{2^{r-1}}$, and that is uniquely attained by the coordinate $w_{3(r-1)}(\lambda)$ when $k$ is odd, and by $w_{3(r-1)+1}(\lambda)$ when $k$ is even. This implies $\angle^* \troppath(\lambda_{k-1}) \troppath(\lambda_k) \troppath(\lambda_{k+1}) = \frac{\pi}{2}$, and we obtain the claim from Proposition~\ref{prop:curvature}.
\end{proof}

\begin{remark}
One can refine Theorem~\ref{th:curvature} to additionally obtain a lower bound on the curvature of the dual central path at the same time.  This requires to consider a slightly modified version of  $\realCEXslack{r}{t}$.  More precisely, it can be shown that it suffices to add the constraints $x_{2r+1} + w_{3r} = \frac{1}{t^r} x_{2r-1}$ and $x_{2r+2} + w_{3r+1} = \frac{1}{t^r} x_{2r}$ involving the two extra variables $x_{2r+1}$, $x_{2r+2}$ and the slack variables $w_{3r}$ and $w_{3r+1}$.
\end{remark}

\begin{remark}
Let us compare the lower bound of Theorem~\ref{th:curvature} with the upper bound of Dedieu, Malajovich and Shub~\cite{dedieu2005curvature} obtained from averaging.
Given a real $m{\times} n$ matrix $A$, vectors $b\in \R^m$ and $c\in \R^n$, and an $m{\times} m$ diagonal matrix $E$ with diagonal entries $\pm 1$, we consider the linear program
\begin{align*}
P_E \qquad \min c^\top x, \; Ax -s =b, \; E s\geq 0 \, .
\end{align*}
It is shown there that the sum of the total curvatures of the dual central paths of the $2^m$ linear programs $P_E$ arising from the various choices of sign matrices $E$ does not exceed
\begin{equation}
2\pi n {{m-1}\choose{n}} \,.
\label{e-shub2}
\end{equation}
It can be verified that the dual linear program $\dualRealCEXslack{r}{t}$ is of the form $P_E$ for $E=-I$, where $I$ is the identity matrix, $n=3r+1$ and $m=5r-1$. 
By applying Stirling's formula to~\eqref{e-shub2} we see that the sum of the total curvatures of the dual central paths of the $2^m$ linear programs $P_E$, arising from varying $E$, is bounded by
\[
2\pi (3r+1) {{5r-2}\choose{3r+1}} = O\bigg(\sqrt{r} \Big(\frac{3125}{108}\Big)^r\bigg) \, .
\]
The lower bound of order $\Omega(2^{r})$ from Theorem~\ref{th:curvature} shows that the total curvature of the dual central path of at least one of these $2^m$ linear programs is exponential in $r$.
\end{remark}

\section{Tropical Lower Bound on the Complexity of Interior Point Methods}
\label{sec-cb}
\noindent
In this section, we derive a general lower bound on the number of iterations of interior point methods with a log-barrier. That lower bound is given by the smallest number of tropical segments needed to describe the tropical central path, see Theorem~\ref{th:tropical_lower_bound}. Applying this result to the parametric family of linear programs $\puiseuxCEXslack{r}(t)$ provides a proof of Theorem~\ref{thm:complexity:intro}.

\subsection{Approximating the tropical central path by tropical segments}\label{subsec:piecewise_approximation}
We return to the general situation from Section~\ref{subsec:geometric_characterization}, and consider  a dual pair of linear programs $\textbf{LP}(\bm A, \bm b, \bm c)$ and $\textbf{DualLP}(\bm A, \bm b, \bm c)$ over Puiseux series. 
Lemma~\ref{lemma:trop_segments} and Proposition~\ref{prop:piecewise} yield that the tropical central path can be described as a concatenation of finitely many tropical segments. Given $\final, \initial \in \R$ such that $\final \leq \initial$, we let $\gamma\bigl([\final, \initial]\bigr)$ the smallest number of tropical segments needed to describe the section $\Ctrop\bigl([\final, \initial]\bigr)$ of the tropical central path.

Let $\epsilon > 0$. For $z \in \R^{2N}$ we denote by $\Bcal_\infty(z; \epsilon)$ the closed $d_\infty$-ball centered at $z$ and with radius $\epsilon$.
Further, we fix $\final, \initial \in \R$ such that $\final \leq \initial$.  The set
\[
\Tcal([\final, \initial]; \epsilon) \coloneqq \bigcup_{\final \leq \lambda \leq \initial} \Bcal_\infty(\Ctrop(\lambda); \epsilon)
\]
is the \emph{tubular neighborhood} of the section $\Ctrop([\final, \initial])$ of the tropical central path; see Figure~\ref{fig:tube}. 
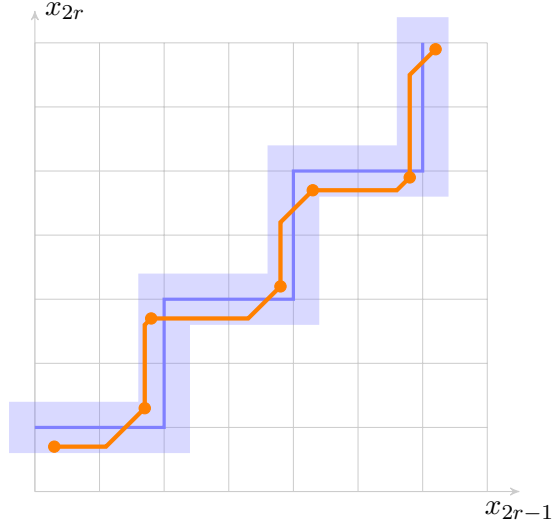
\begin{figure}
\begin{center}
\begin{tikzpicture}[>=stealth',scale=.85,  curve/.style={blue!50, very thick},tube/.style={color=lightgray, fill opacity = 0.7}]
     \draw[help lines, gray!40!] (0,0) grid (7,7); 
     \draw[gray!40!, ->] (0,0) -- (7.5,0);
     \draw[gray!40!, ->] (0,0) -- (0,7.5);
     \node[anchor = north] at (7.5,0) {$x_{2r-1}$};
     \node[anchor = west] at (0,7.5) {$x_{2r}$};

\draw[curve] (0,1) -- (2,1)  -- (2,3) -- (4,3) -- (4,5) -- (6,5) -- (6,7);
\fill[curve,fill opacity=0.3] (-0.4, 1.4) -- (0,1.4) -- (1.6,1.4) -- (1.6,3.4) -- (3.6,3.4) -- (3.6,5.4) -- (5.6,5.4) -- (5.6,7.4) -- (6.4,7.4) -- (6.4,4.6) -- (4.4,4.6) -- (4.4,2.6) -- (2.4,2.6) -- (2.4,0.6) -- (-0.4,0.6) -- cycle;

\draw[orange,ultra thick] (6.2,6.9) -- (5.8,6.5) -- (5.8,4.9) -- (5.6,4.7) -- (4.3,4.7) -- (3.8,4.2) -- (3.8,3.2) -- (3.3,2.7) -- (1.8,2.7) -- (1.7,2.6) -- (1.7,1.3) -- (1.1,0.7) -- (0.3,0.7);

\coordinate (p1) at (6.2,6.9);
\coordinate (p2) at (5.8,4.9);
\coordinate (p3) at (4.3,4.7);
\coordinate (p4) at (3.8,3.2);
\coordinate (p5) at (1.8,2.7);
\coordinate (p6) at (1.7,1.3);
\coordinate (p7) at (0.3,0.7);

\filldraw[orange] (p1) circle (2.5pt);
\filldraw[orange] (p2) circle (2.5pt);
\filldraw[orange] (p3) circle (2.5pt);
\filldraw[orange] (p4) circle (2.5pt);
\filldraw[orange] (p5) circle (2.5pt);
\filldraw[orange] (p6) circle (2.5pt);
\filldraw[orange] (p7) circle (2.5pt);
\end{tikzpicture}
\end{center}
\caption{A tubular neighborhood of the tropical central path (in light blue), containing an approximation by tropical segments (in orange).}\label{fig:tube} 
\end{figure}

Let us consider the union $\Scal$ of a finite sequence of consecutive tropical segments
\[
\Scal \coloneqq \tsegm(z^0, z^1) \cup \tsegm(z^1, z^2) \cup \dots \cup \tsegm(z^{p-1}, z^p) \qquad (z^0, \dots, z^p \in \Trop^{2N})
\]
which is contained in the tubular neighborhood $\Tcal \coloneqq \Tcal([\final, \initial]; \epsilon)$, and which further satisfies $z^0 \in \Bcal_\infty(\Ctrop(\final); \epsilon)$ and $z^p \in \Bcal_\infty(\Ctrop(\initial); \epsilon)$.  That is, $\Scal$ approximates the tropical central path by $p$ tropical segments, starting and ending in small neighborhoods of $z^0$ and $z^p$, respectively; see Figure~\ref{fig:tube} for an illustration. Next we will show that, in this situation, the number of tropical segments in $\Scal$ is bounded from below by $\gamma\bigl([\final, \initial]\bigr)$, provided that the tubular neighborhood $\Tcal$ is tight enough. To this end, we set $\epsilon_0 > 0$ to one sixth of the minimal $d_\infty$-distance between any two distinct vertices in the polygonal curve $\Ctrop(\R)$.  Note that $\epsilon_0$ does not depend on the choices of $\final$ and $\initial$.
\begin{proposition}\label{prop:piecewise_approximation}
  If $\epsilon < \epsilon_0$ then $p \geq \gamma\bigl([\final, \initial]\bigr)$.
\end{proposition}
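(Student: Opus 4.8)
The plan is to use $\Scal$ to exhibit $\Ctrop([\final,\initial])$ as a union of at most $p$ tropical segments; since $\gamma([\final,\initial])$ is by definition the minimal number of tropical segments with this property, this forces $p\ge\gamma([\final,\initial])$. I would begin by recording two preliminary facts. First, $\lambda\mapsto\Ctrop(\lambda)$ is bi-Lipschitz for $d_\infty$: Corollary~\ref{cor:nonexpansive} gives $d_\infty(\Ctrop(\lambda),\Ctrop(\lambda'))\le|\lambda-\lambda'|$, while $\tgap\circ\Ctrop=\mathrm{id}$ by~\eqref{eq:trop_csc} and $\tgap$ is $2$-Lipschitz for $d_\infty$, whence $d_\infty(\Ctrop(\lambda),\Ctrop(\lambda'))\ge\tfrac12|\lambda-\lambda'|$; in particular $\Ctrop$ is injective, and the finitely many vertices of $\Ctrop(\R)$ are pairwise at $d_\infty$-distance strictly more than $6\epsilon$. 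Second, any sub-arc $\Ctrop([\sigma,\tau])$ that happens to be a tropical segment must be a single ordinary segment: by Lemma~\ref{lemma:trop_segments} the ordinary pieces of a tropical segment have supports forming a strictly increasing chain, whereas by Proposition~\ref{prop:piecewise} every direction occurring along $\Ctrop$ has the shape $(e^K,e^{[N]\setminus K})$, i.e.\ has exactly $N$ nonzero entries, so no two of them are comparable. Hence $\gamma([\final,\initial])$ equals the number $g$ of maximal ordinary segments of $\Ctrop|_{[\final,\initial]}$, equivalently $g-1$ is the number of breakpoints of $\Ctrop$ lying in $(\final,\initial)$.

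Next I would project $\Scal$ onto the path: for each $z\in\Tcal([\final,\initial];\epsilon)$ choose, using the definition of $\Tcal$, a parameter $\pi(z)\in[\final,\initial]$ with $d_\infty(z,\Ctrop(\pi(z)))\le\epsilon$, and set $\tau_i:=\pi(z^i)$, taking $\tau_0=\final$ and $\tau_p=\initial$ (possible since $z^0,z^p$ lie in the prescribed $\epsilon$-balls). Let $I_i\subseteq[\final,\initial]$ be the real interval with endpoints $\tau_{i-1}$ and $\tau_i$. Consecutive $I_i$ share an endpoint and $\tau_0=\final$, $\tau_p=\initial$, so $\bigcup_i I_i=[\final,\initial]$ and therefore $\Ctrop([\final,\initial])=\bigcup_{i=1}^{p}\Ctrop(I_i)$. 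It thus remains to show that each $\Ctrop(I_i)$ contains no breakpoint of $\Ctrop$ in its interior -- then it is a single ordinary segment, and $\Ctrop([\final,\initial])$ has been written as a union of $p$ tropical segments, completing the proof.

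For the remaining claim, assume for contradiction that $\mu\in\interior(I_i)$ is a breakpoint of $\Ctrop$, and let $M^{-}$ and $M^{+}$ be the (distinct, both of size $N$) sets of coordinates that are moving just before and just after $\mu$. On the one hand, the tropical segment $T_i:=\tsegm(z^{i-1},z^i)$ stays $\epsilon$-close to $\Ctrop([\final,\initial])$ and is connected with endpoints $\epsilon$-close to $\Ctrop(\tau_{i-1})$ and $\Ctrop(\tau_i)$, so it comes within $O(\epsilon)$ of the vertex $\Ctrop(\mu)$. On the other hand $T_i$ is, on each of its two halves, a monotone staircase along which the set of moving coordinates only grows (Lemma~\ref{lemma:trop_segments}); comparing this schedule with the schedule of $\Ctrop$ across the \emph{non-nested} change from $M^{-}$ to $M^{+}$, one finds that every point of $T_i$ differs from $\Ctrop(\mu)$, in some coordinate, by at least (essentially) the length of the ordinary segments of $\Ctrop$ adjacent to $\Ctrop(\mu)$ -- and these exceed $6\epsilon_0>6\epsilon$ by the choice of $\epsilon_0$. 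Once the $\epsilon$-order errors are bounded this contradicts the previous sentence, which is what we wanted.

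The hard part is the quantitative staircase estimate of the last step: upgrading the soft statement ``$T_i$ stays in the $\epsilon$-tube while $\Ctrop$ turns non-nestedly at $\mu$'' into an honest lower bound of order $\epsilon_0$ for the distance from $T_i$ to $\Ctrop(\mu)$. This needs a precise description of the incompatibility between the support-increasing schedule of a tropical segment (Lemma~\ref{lemma:trop_segments}) and the support-\emph{changing} behaviour of the tropical central path at a vertex (Proposition~\ref{prop:piecewise}), and then a careful accounting of the several $\epsilon$-order error terms -- coming from the tube, from the choice of $\pi$, and from $z^{i-1},z^{i}$ being only approximately on the path -- so that their sum stays below the vertex-separation scale $6\epsilon_0$; this is exactly the role of the constant $\tfrac16$ in the definition of $\epsilon_0$.
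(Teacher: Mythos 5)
Your reduction of $\gamma\bigl([\final,\initial]\bigr)$ to the number of maximal ordinary segments of $\Ctrop|_{[\final,\initial]}$ is correct (all derivatives $(e^K,e^{[N]\setminus K})$ have support of size exactly $N$, so no two are nested), and the overall strategy — exhibiting the path section as a union of at most $p$ tropical segments $\Ctrop(I_i)$ — is genuinely different from the paper's. But the step you yourself flag as ``the hard part'' is not merely unproven; the intermediate claim it is meant to establish is \emph{false}. A breakpoint $\mu$ can perfectly well lie in the interior of $I_i$ without forcing $T_i=\tsegm(z^{i-1},z^i)$ out of the tube, namely when $\tau_{i-1}$ (or $\tau_i$) is within $O(\epsilon)$ of $\mu$. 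Concretely, project to the two coordinates $k\in K\setminus L$ and $\ell\in L\setminus K$ and normalize $\Ctrop(\mu)=0$: if $z^{i-1}\approx(-\delta,0)$ and $z^i\approx(0,b)$ with $\delta\leq\epsilon$ and $b$ large, the tropical segment passes through $\approx(-\delta,\,b-\delta)$, which is within $\delta+O(\epsilon)$ of the point $\Ctrop(\mu+b-\delta)$ of the path; the segment turns the corner and stays in the tube. So the distance from $T_i$ to $\Ctrop(\mu)$ is bounded below only by roughly $\min(|\tau_{i-1}-\mu|,|\tau_i-\mu|)-O(\epsilon)$, not by the length of the adjacent ordinary segments, and your contradiction evaporates exactly in the regime where a breakpoint sits near an endpoint of $I_i$. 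In that regime some $\Ctrop(I_i)$ is \emph{not} a tropical segment and your decomposition does not yield $p\geq\gamma$.

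What your analysis actually proves is the weaker (and correct) statement that every breakpoint of $\Ctrop$ in $]\final,\initial[$ must lie within $O(\epsilon)$ in parameter of some $\tau_j$, i.e.\ every vertex $\Ctrop(\lambda_i)$ has some endpoint $z^j$ in the ball $\Bcal_\infty(\Ctrop(\lambda_i);C\epsilon)$. That is precisely the paper's route: for each vertex one exhibits a tropical halfspace $\Hcal$ with $\Tcal\cap\Hcal\subset\Bcal_\infty(\Ctrop(\lambda_i);3\epsilon)$ whose complement is tropically convex, so that if no \emph{endpoint} $z^j$ lay in $\Hcal$ the whole curve $\Scal$ would avoid $\Hcal$, contradicting a connectivity argument in $\Tcal\setminus\Bcal_\infty(\Ctrop(\lambda_i);\epsilon)$; one then counts the $\gamma+1$ pairwise disjoint balls (disjoint because $\epsilon<\epsilon_0$), each containing one of the $p+1$ points $z^j$, to get $p\geq\gamma$. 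To salvage your argument you would have to replace the false ``no breakpoint in $\interior(I_i)$'' claim by this endpoint-counting step (or shift the interval endpoints by $O(\epsilon)$ to absorb nearby breakpoints, which amounts to the same thing); note also that the tropical convexity of $\Trop^{2N}\setminus\Hcal$, which you never invoke, is what upgrades ``no endpoint near the vertex'' to ``no point of $\Scal$ near the vertex'' and cannot be dispensed with by connectivity of $T_i$ alone.
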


\begin{proof}
We abbreviate $\gamma \coloneqq \gamma\bigl([\final, \initial]\bigr)$.
Let us consider a sequence $\lambda_0 = \final < \lambda_1 < \dots < \lambda_\gamma = \initial$ such that $\Ctrop([\final, \initial])$ can be decomposed as the union of $\gamma$ successive tropical segments, \ie, 
\[
\Ctrop([\final, \initial]) = \tsegm(\Ctrop(\lambda_0), \Ctrop(\lambda_1)) \cup \dots \cup \tsegm(\Ctrop(\lambda_{\gamma-1}, \lambda_\gamma)) \, .
\]
By definition of $\gamma$, these segments are maximal in the sense that none of them is properly contained in a tropical segment contained in $\Ctrop([\final, \initial])$. 

Let us look at the shape of the tropical central path in the neighborhood of an intermediate point $\Ctrop(\lambda_i)$ for $0 < i < \gamma$. Without loss of generality, we assume that $\Ctrop(\lambda_i) = 0$. Since the segment $\tsegm(\Ctrop(\lambda_{i-1}), \Ctrop(\lambda_i))$ is maximal, the parameter $\lambda_i$ marks a point where the tropical central path is not differentiable. Let $K, L \subset [2N]$ such that $e^K$ and $e^L$ are the left and right derivatives of $\Ctrop$ at $\lambda_i$, respectively. Since $\epsilon<\epsilon_0$, the point $\Ctrop(\lambda_i)$ is the only breakpoint of $\Ctrop([\final, \initial])$ contained in the ball $\Bcal_\infty(\Ctrop(\lambda_i); 3\epsilon)$.  We derive
\begin{equation}\label{eq:breakpoint_neighborhood}
\Ctrop(\lambda + \lambda_i) = 
\begin{cases}	
- \lambda e^K & \text{if} \; -3\epsilon \leq \lambda \leq 0 \, , \\ 
\lambda e^L & \text{if} \; 0 \leq \lambda \leq 3 \epsilon \, . 	
\end{cases}
\end{equation}
Moreover, we have $K \not \subset L$, since $K \subset L$ would contradict the maximality of the tropical segment $\tsegm(\Ctrop(\lambda_{i-1}), \Ctrop(\lambda_i))$, see Lemma~\ref{lemma:trop_segments}. Further, by Proposition~\ref{prop:piecewise}, the set $L$ is non-empty. Thus, let us consider $k \in K \setminus L$ and $\ell \in L$. We introduce the tropical halfspace
\begin{equation}\label{eq:transversal_halfspace}
\Hcal \coloneqq \{ z \in \Trop^{2N} \colon \max(0, z_\ell - \epsilon) \leq z_k + \epsilon \} \, 
\end{equation}
and refer to Figure~\ref{fig:breakpoint_neighborhood} for an illustration of the setting.

We claim that at least one point $z^j$ belongs to the neighborhood $\Tcal \cap \Hcal$ of $\Ctrop(\lambda_i)$.  Arguing indirectly, we assume that $z^j\not\in\Hcal$ for all $j$. The complement of $\Hcal$ is a tropically convex set, \ie, if $z, z' \not \in \Hcal$, then the tropical segment between $z$ and $z'$ is contained in the complement of $\Hcal$. It follows that $\Scal \subset \Tcal \setminus \Hcal$. As $\Bcal_\infty(\Ctrop(\lambda_i); \epsilon) \subset \Hcal$ the points $z^0$ and $z^p$ are located in the same path-connected component of $\Tcal \setminus \Bcal_\infty(\Ctrop(\lambda_i); \epsilon)$.  Further, there is a path from $z^0$ to $\Ctrop(\final)$ in $\Bcal_\infty(\Ctrop(\final); \epsilon) \subset \Tcal$.  That path lies in $\Tcal \setminus \Bcal_\infty(\Ctrop(\lambda_i); \epsilon)$ as $\Bcal_\infty(\Ctrop(\final); \epsilon) \cap \Bcal_\infty(\Ctrop(\lambda_i); \epsilon) = \emptyset$ due to our assumption $\epsilon < \epsilon_0$.  The same argument applies to $z^p$ and $\Ctrop(\initial)$.  Therefore, $\Ctrop(\final)$ and $\Ctrop(\initial)$ belong to the same path-connected component of $\Tcal \setminus \Bcal_\infty(\Ctrop(\lambda_i); \epsilon)$.  However, the latter set consists of two components containing $\Ctrop([\final, \lambda_i - \epsilon[)$ and $\Ctrop(]\lambda_i + \epsilon, \initial])$, respectively. This provides a contradiction.

From~\eqref{eq:breakpoint_neighborhood} and the strict monotonicity of the map $\Ctrop$, we know that the intersection of $\Ctrop([\final, \initial])$ with $\Hcal$ reduces to $\Ctrop([\lambda_i - \epsilon, \lambda_i + 2\epsilon])$.  Consequently, we have
\[
\Tcal \cap \Hcal \subset \Bcal_\infty(\Ctrop(\lambda_i); 3 \epsilon) \, .
\]
Summing up, for all $0 < i < p$, there is at least one index $j$ such that the ball $\Bcal_\infty(\Ctrop(\lambda_i); 3\epsilon)$ contains $z^j$.  This even holds for $i = 0$ or $i = p$, by the choices of $z^0$ and $z^p$.  As $\epsilon < \epsilon_0$, all these $p+1$ balls are pairwise disjoint.  We finally conclude that $p \geq \gamma$. 
\end{proof}

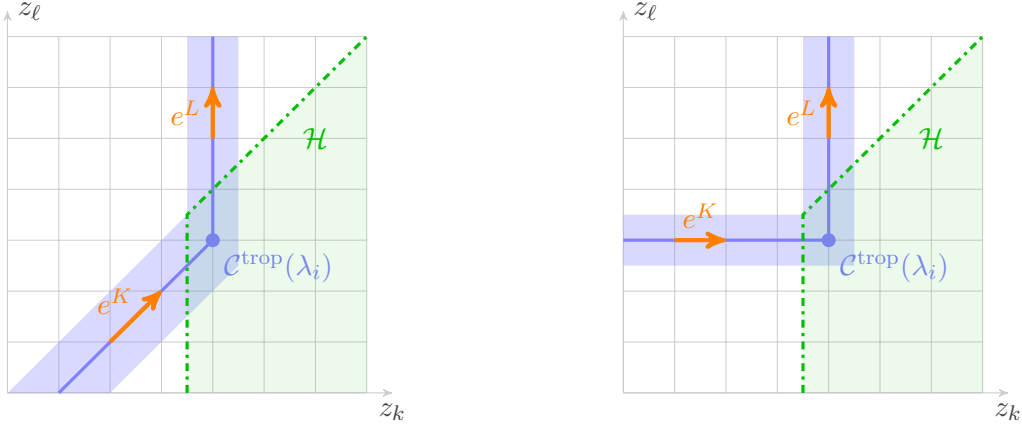
\begin{figure}
\begin{center}
\begin{tikzpicture}[scale=0.9,>=stealth',scale=0.75,  curve/.style={blue!50, very thick},tube/.style={color=lightgray, fill opacity = 0.7},vector/.style={color=orange, ultra thick},hs/.style={green!75!black,fill opacity=0.075},hp/.style={green!75!black,very thick, dashdotted}]

\begin{scope}
     \draw[help lines, gray!40!] (0,0) grid (7,7); 
     \draw[gray!40!, ->] (0,0) -- (7.5,0);
     \draw[gray!40!, ->] (0,0) -- (0,7.5);
     \node[anchor = north,darkgray] at (7.5,0) {$z_k$};
     \node[anchor = west,darkgray] at (0,7.5) {$z_\ell$};

\draw[curve] (1,0) -- (4,3) -- (4,7);
\filldraw[curve] (4,3) circle (3pt) node[below right] {$\Ctrop(\lambda_i)$};
\fill[curve,fill opacity=0.3] (2,0) -- (4.5,2.5) -- (4.5,7) -- (3.5,7) -- (3.5,3.5) -- (0,0)  -- cycle;

\draw[hp] (3.5,0) -- (3.5,3.5) -- (7,7);
\fill[hs] (3.5,0) -- (3.5,3.5) -- (7,7) -- (7,0) -- cycle;
\node[green!75!black] at (6,5) {$\Hcal$};

\draw[vector,->] (2,1) -- node[above left=-1ex] {$e^K$} (3,2);
\draw[vector,->] (4,5) -- node[left] {$e^L$} (4,6);
\end{scope}

\begin{scope}[shift={(12,0)}]
     \draw[help lines, gray!40!] (0,0) grid (7,7); 
     \draw[gray!40!, ->] (0,0) -- (7.5,0);
     \draw[gray!40!, ->] (0,0) -- (0,7.5);
     \node[anchor = north,darkgray] at (7.5,0) {$z_k$};
     \node[anchor = west,darkgray] at (0,7.5) {$z_\ell$};

\draw[curve] (0,3) -- (4,3) -- (4,7);
\filldraw[curve] (4,3) circle (3pt) node[below right] {$\Ctrop(\lambda_i)$};
\fill[curve,fill opacity=0.3] (0,2.5) -- (4.5,2.5) -- (4.5,7) -- (3.5,7) -- (3.5,3.5) -- (0,3.5)  -- cycle;

\draw[hp] (3.5,0) -- (3.5,3.5) -- (7,7);
\fill[hs] (3.5,0) -- (3.5,3.5) -- (7,7) -- (7,0) -- cycle;
\node[green!75!black] at (6,5) {$\Hcal$};

\draw[vector,->] (1,3) -- node[above] {$e^K$} (2,3);
\draw[vector,->] (4,5) -- node[left] {$e^L$} (4,6);
\end{scope}
\end{tikzpicture}
\end{center}
\caption{The tropical central path (in blue) and its tubular neighborhood $\Tcal$ (in light blue) near a breakpoint $\Ctrop(\lambda_i)$, projected onto the plane $(z_k, z_\ell)$.  The tropical halfspace $\Hcal$ defined in~\eqref{eq:transversal_halfspace} is shown in green.  The cases $\ell \in K$ and $\ell \not \in K$ are depicted left and right, respectively.}\label{fig:breakpoint_neighborhood}
\end{figure}

We are now ready to establish a general lower bound on the number of iterations performed by the class of log-barrier interior point methods described in Section~\ref{sec:classical_central_path}. The result is stated in terms of polygonal curves contained in the wide neighborhood $\Ncal^{-\infty}_{\theta,t}$ of the central path, since such curves are the trajectories followed by the log-barrier interior point methods.  Our proof combines Theorem~\ref{th:uniform_convergence_central_path} with Proposition~\ref{prop:piecewise_approximation}.  To this end, we pick a real number $t_1 \geq t_0$ such that
\begin{equation}
\log_t \Bigl(\frac{2N}{1-\theta}\Bigr) + \delta(t) < \epsilon_0 \label{eq:threshold}
\end{equation}
holds for all $t > t_1$.
Notice that $t_1$ depends on $t_0$, $\epsilon_0$ and, in particular, on the precision parameter $\theta$.
Hilbert's projective metric $\hilbert$ plays a role through the definition of $\delta(t)$ in \eqref{eq:delta_t}.
Recall that $\mgap(\cdot)$ measures the duality gap.

\begin{theorem}\label{th:tropical_lower_bound}
Suppose that $0 < \theta < 1$ and $t > t_1$.  Then, every polygonal curve $[z^0, z^1] \cup [z^1, z^2] \cup \dots \cup [z^{p-1}, z^p]$ contained in the neighborhood $\Ncal^{-\infty}_{\theta,t}$ satisfies
\[
p \geq \gamma\bigl([\log_t\mgap(z^0), \log_t \mgap(z^p)]\bigr) \, .
\]
\end{theorem}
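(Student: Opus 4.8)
The plan is to transport the polygonal curve into the tropical world by applying $\log_t$ coordinatewise, and then to invoke Proposition~\ref{prop:piecewise_approximation}. Write $\mu^j \coloneqq \mgap(z^j)$, put $\final \coloneqq \log_t \mu^0$ and $\initial \coloneqq \log_t \mu^p$, and --- reversing the orientation of the curve if necessary, which does not change $p$ nor, by symmetry of its two endpoints, the quantity $\gamma$ --- assume $\final \le \initial$. Set $\epsilon \coloneqq \log_t\bigl(\frac{2N}{1-\theta}\bigr) + \delta(t)$; by the defining property~\eqref{eq:threshold} of $t_1$ together with the hypothesis $t > t_1$, we have $\epsilon < \epsilon_0$.

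The first step is a metric estimate for each individual tropical segment $S^j \coloneqq \tsegm\bigl(\log_t z^{j-1}, \log_t z^j\bigr)$, for $j \in [p]$. Fix a point of $S^j$. By Lemma~\ref{lemma:uniform_convergence_segment} there is $z \in [z^{j-1}, z^j]$ such that the chosen point is within $d_\infty$-distance $\log_t 2$ of $\log_t z$. Since $\mgap$ is affine along $[z^{j-1}, z^j]$ by Proposition~\ref{prop:mgap_affine}, this $z$ belongs to $\Ncal^{-\infty}_{\theta,t}(\mgap(z))$ with $\log_t \mgap(z)$ lying in the closed interval $I_j$ whose endpoints are $\log_t \mu^{j-1}$ and $\log_t \mu^j$; hence Theorem~\ref{th:uniform_convergence_central_path} places $\log_t z$ within $d_\infty$-distance $\log_t\bigl(\frac{N}{1-\theta}\bigr) + \delta(t)$ of $\Ctrop(\log_t \mgap(z))$. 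The triangle inequality then puts the chosen point of $S^j$ within $d_\infty$-distance $\epsilon$ of $\Ctrop(\log_t \mgap(z))$, so that $S^j \subseteq \Tcal(I_j; \epsilon)$. Applying the same estimate at the endpoints yields $\log_t z^0 \in \Bcal_\infty(\Ctrop(\final); \epsilon)$ and $\log_t z^p \in \Bcal_\infty(\Ctrop(\initial); \epsilon)$.

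The second step assembles this into the hypotheses of Proposition~\ref{prop:piecewise_approximation}, applied to the union $\Scal \coloneqq S^1 \cup \dots \cup S^p$ of consecutive tropical segments with breakpoints $\log_t z^0, \dots, \log_t z^p$. Since consecutive intervals $I_j$ overlap in the shared endpoint $\log_t \mu^j$ and $I_1, I_p$ contain $\final, \initial$ respectively, one has $\bigcup_j I_j = [\final, \initial]$ provided the duality measure stays in $[\mu^0, \mu^p]$ along the curve, whence $\Scal \subseteq \Tcal\bigl([\final,\initial];\epsilon\bigr)$. Together with the two endpoint conditions and $\epsilon < \epsilon_0$, Proposition~\ref{prop:piecewise_approximation} delivers $p \ge \gamma\bigl([\final,\initial]\bigr)$, which is the assertion.

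I expect the main obstacle to be the metric bookkeeping in the first step: one must chain the segmentwise estimate of Lemma~\ref{lemma:uniform_convergence_segment} with the uniform neighborhood estimate of Theorem~\ref{th:uniform_convergence_central_path} so that the resulting tube radius is precisely the quantity $\log_t\bigl(\frac{2N}{1-\theta}\bigr) + \delta(t)$ controlled by~\eqref{eq:threshold}. A secondary and more elementary point is to handle polygonal curves whose duality measure is not monotone along the curve: such excursions can only insert further tropical segments into $\Scal$, so it suffices to treat the monotone case, which is precisely the one relevant for interior point methods.
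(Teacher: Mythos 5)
Your proposal follows essentially the same route as the paper: push the polygonal curve through $\log_t$, bound the distance of each tropical segment $\tsegm(\log_t z^{j-1},\log_t z^j)$ from the tropical central path by chaining Lemma~\ref{lemma:uniform_convergence_segment} with Theorem~\ref{th:uniform_convergence_central_path}, and then invoke Proposition~\ref{prop:piecewise_approximation} with tube radius $\log_t\bigl(\frac{2N}{1-\theta}\bigr)+\delta(t)<\epsilon_0$. That part is correct and matches the paper's argument.

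The one loose end is your dismissal of the non-monotone case with ``such excursions can only insert further tropical segments into $\Scal$, so it suffices to treat the monotone case.'' As stated this is not a proof: if some intermediate $\mgap(z^i)$ leaves the interval $[\mu^0,\mu^p]$, the corresponding tropical segments need not lie in the tube $\Tcal([\final,\initial];\epsilon)$, so Proposition~\ref{prop:piecewise_approximation} does not apply to $\Scal$ as a whole. Note also that what your main argument actually needs is not monotonicity of $j\mapsto\mu^j$ but the weaker sandwiching condition $\mu^0\le\mu^i\le\mu^p$ for all $i$. The paper's fix is the right one to adopt: let $j$ and $k$ be indices achieving the minimum and maximum of $\mgap(z^i)$; the subsequence $[z^j,z^{j+1}],\dots,[z^{k-1},z^k]$ automatically satisfies the sandwiching condition, so the special case gives $p\ge k-j\ge\gamma\bigl([\log_t\mgap(z^j),\log_t\mgap(z^k)]\bigr)$, and one concludes because $\gamma$ is monotone under inclusion of intervals, $[\log_t\mgap(z^0),\log_t\mgap(z^p)]\subseteq[\log_t\mgap(z^j),\log_t\mgap(z^k)]$. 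With that substitution your proof is complete.
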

\begin{proof}
We first assume that $\mgap(z^0) \leq \mgap(z^i) \leq \mgap(z^p)$ for all $i$.  Consider $z \in [z^i, z^{i+1}]$ for some $i$ with $0 \leq i < p$.  By Proposition~\ref{prop:mgap_affine} we have $\mgap(z^0) \leq \mgap(z) \leq \mgap(z^p)$.  It follows that $z \in \Ncal^{-\infty}_{\theta,t}(\mu)$ for some $\mu$ with $\mgap(z^0) \leq \mu \leq \mgap(z^p)$.

We define $\Scal$ as the union of the tropical segments $\tsegm(\log_t z^i, \log_t z^{i+1})$ for $0 \leq i < p$, and set $\final \coloneqq \log_t \mgap(z^0)$ and $\initial \coloneqq \log_t \mgap(z^p)$. By Theorem~\ref{th:uniform_convergence_central_path} and Lemma~\ref{lemma:uniform_convergence_segment}, we have:
\[
d_\infty\bigl(\Scal,\Ctrop([\final, \initial])\bigr) \leq \log_t\Bigl(\frac{2N}{1-\theta}\Bigr) + \delta(t) < \epsilon_0
\]
since $t>t_1$.
By choosing $\epsilon$ as $d_\infty\bigl(\Scal,\Ctrop([\final, \initial])\bigr)$, Theorem~\ref{th:uniform_convergence_central_path} ensures that $z^0 \in \Bcal_\infty(\Ctrop(\final); \epsilon)$ and $z^p \in \Bcal_\infty(\Ctrop(\initial); \epsilon)$. Therefore, we can apply Proposition~\ref{prop:piecewise_approximation}, which yields the claim in this special case.

We need to deal with the general case.  Let $j$ and $k$ be indices such that $\mgap(z^j)$ and $\mgap(z^k)$ are the minimal and maximal values among the $\mgap(z^i)$, respectively.  If $j = k$ then all duality measures $\mgap(z^i)$ agree, and $\gamma\bigl([\log_t\mgap(z^0), \log_t \mgap(z^p)]\bigr) = 1$.  If $j < k$, we can apply the argument for the special case to the subsequence $[z^j, z^{j+1}], \dots, [z^{k-1}, z^k]$.  This way we arrive at
\[
p \geq k-j \geq \gamma\bigl([\log_t\mgap(z^j), \log_t \mgap(z^k)]\bigr) \geq \gamma\bigl([\log_t\mgap(z^0), \log_t \mgap(z^p)]\bigr) \, ,
\]
where the latter inequality comes from the fact that $[\log_t\mgap(z^0), \log_t \mgap(z^p)]$ is contained in $[\log_t\mgap(z^j), \log_t \mgap(z^k)]$. The remaining case $j > k$ is similar.
\end{proof}

\subsection{An exponential lower bound on the number of iterations for our main example}\label{subsec:cex_complexity}

Let us consider the linear programs $\realCEXslack{r}{t}$ and $\dualRealCEXslack{r}{t}$.  A direct inspection reveals that choosing $t_0 = 0$ is sufficient to meet the requirements of Lemma~\ref{lemma:archimedean}. 

We focus on the section $\Ctrop([0,2])$ of the associated tropical central path, i.e., we consider $\final=0$ and $\initial=2$.  As explained in Section~\ref{subsec:cex_tropical_central_path} and illustrated in Figure~\ref{fig:x_central_path}, the projection of $\Ctrop\bigl([0,2]\bigr)$ onto the plane $(x_{2r-1}, x_{2r})$ consists of $2^{r-1}$ ordinary segments, which alternate their directions.  This projection to two dimensions cannot be expressed as a concatenation of less than $2^{r-1}$ tropical segments in the plane (see Figure~\ref{fig:tropical_segments}).  Therefore, also the tropical central path cannot be written as the union of fewer tropical segments in any higher dimensional space.  With our notation from Section~\ref{subsec:piecewise_approximation} this means that
\[
\gamma\bigl([0,2]\bigr) \geq 2^{r-1} \, .
\]

Moreover, it can be verified that the minimal $d_\infty$-distance between any two vertices in $\Ctrop$ equals $1/2^{r-2}$.  Therefore, choosing $\epsilon_0 = 1/(3 \cdot 2^{r-1})$ is good enough for being able to apply Proposition~\ref{prop:piecewise_approximation}.  It remains to find a sufficiently large number $t_1$ such that~\eqref{eq:threshold} holds for all $t > t_1$.  Every non-null coefficient in the constraint matrix of $\puiseuxCEX{r}$ is a monomial of degree in $\frac{1}{2^{r-1}} \Z$, and thus we may apply Theorem~\ref{th:polyhedron_metric_estimate}, where $\eta_0 \geq 1/2^{r-1}$.  As a consequence, if $t \geq ((2N)!)^{2^{r-1}}$, we have
\[
\hilbert(\log_t \bm \Fcal(t), \val(\bm \Fcal)) \leq \log_t\bigl((2N+1)^2 ((2N)!)^4\bigr) \, .
\]
Recall that $N = 5r-1$ is the total number of variables (including slacks).  Now Theorem~\ref{th:tropical_lower_bound} specializes to the following result.
\begin{theorem}\label{th:exp_lower_bound}
Let $0 < \theta < 1$, and suppose that 
\begin{equation}
t > \biggl(\max\Bigl((10r-2)!, \frac{\bigl((10r - 1)!\bigr)^{24}}{(1-\theta)^3}\Bigr)\biggr)^{2^{r-1}} \, . \label{eq:threshold_t}
\end{equation} 
Then, every polygonal curve $[z^0, z^1] \cup [z^1, z^2] \cup \dots \cup [z^{p-1}, z^p]$ contained in the neighborhood $\Ncal^{-\infty}_{\theta,t}$ of the primal-dual central path of $\realCEXslack{r}{t}$, with $\bar{\mu}(z^0) \leq 1$ and $\bar{\mu}(z^p) \geq t^2$, contains at least $2^{r-1}$ segments.
\end{theorem}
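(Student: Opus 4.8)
The plan is to read off the statement as the instantiation of the general lower bound Theorem~\ref{th:tropical_lower_bound} to the family $\realCEXslack{r}{t}$, using the data computed in Section~\ref{subsec:cex_tropical_central_path} together with the threshold prepared in the discussion above. First I would collect the three facts that have, in effect, already been checked: $t_0 = 0$ satisfies Lemma~\ref{lemma:archimedean}; the section $\Ctrop([0,2])$ projects onto the $(x_{2r-1},x_{2r})$-plane to a staircase of $2^{r-1}$ ordinary segments of alternating directions (Proposition~\ref{prop:counterexample_tropical_central_path}, Table~\ref{tab:table}, Figure~\ref{fig:x_central_path}), which by Lemma~\ref{lemma:trop_segments} cannot be covered by fewer than $2^{r-1}$ tropical segments, so $\gamma([0,2]) \ge 2^{r-1}$; and the least $d_\infty$-distance between two distinct vertices of $\Ctrop(\R)$ equals $1/2^{r-2}$, so that $\epsilon_0 = 1/(3\cdot 2^{r-1})$ is an admissible choice in Proposition~\ref{prop:piecewise_approximation}.

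Next I would pin down an admissible threshold $t_1$ for Theorem~\ref{th:tropical_lower_bound}, namely one forcing $\log_t(2N/(1-\theta)) + \delta(t) < \epsilon_0$. Since every nonzero coefficient of the constraint data of $\puiseuxCEXslack{r}$ and $\dualPuiseuxCEXslack{r}$ is a monomial whose exponent lies in $\tfrac{1}{2^{r-1}}\Z$, the gap parameter $\eta_0$ of Theorem~\ref{th:polyhedron_metric_estimate}, computed for $\bm\Fcal \subset \K_+^{2N}$ with $2N = 10r-2$, satisfies $\eta_0 \ge 1/2^{r-1}$, and hence $((10r-2)!)^{1/\eta_0} \le ((10r-2)!)^{2^{r-1}}$. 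So whenever $t \ge ((10r-2)!)^{2^{r-1}}$, Theorem~\ref{th:polyhedron_metric_estimate} gives $\hilbert(\log_t\bm\Fcal(t),\val(\bm\Fcal)) \le \log_t\bigl((10r-1)^2((10r-2)!)^4\bigr)$, whence $\delta(t) \le \log_t\bigl((10r-1)^4((10r-2)!)^8\bigr)$. Adding $\log_t(2N/(1-\theta))$ and using the elementary collapse $(10r-2)(10r-1)^4((10r-2)!)^8 \le (10r-1)^5((10r-2)!)^8 = ((10r-1)!)^8/(10r-1)^3 \le ((10r-1)!)^8$, I obtain $\log_t(2N/(1-\theta)) + \delta(t) \le \log_t\bigl(((10r-1)!)^8/(1-\theta)\bigr)$, which is $< 1/(3\cdot 2^{r-1})$ exactly when $t > \bigl(((10r-1)!)^{24}/(1-\theta)^3\bigr)^{2^{r-1}}$. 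Combined with the constraint $t \ge ((10r-2)!)^{2^{r-1}}$, this is precisely hypothesis~\eqref{eq:threshold_t}; so under~\eqref{eq:threshold_t} we have $t > t_1$ (and trivially $t > t_0$), and Theorem~\ref{th:tropical_lower_bound} applies.

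Finally I would translate the endpoint hypotheses and conclude. Since $t > 1$, the assumptions $\mgap(z^0) \le 1$ and $\mgap(z^p) \ge t^2$ give $\log_t\mgap(z^0) \le 0$ and $\log_t\mgap(z^p) \ge 2$, hence $[0,2]\subseteq[\log_t\mgap(z^0),\log_t\mgap(z^p)]$. By monotonicity of $\gamma$ under interval inclusion (as already used in the proof of Theorem~\ref{th:tropical_lower_bound}) together with $\gamma([0,2]) \ge 2^{r-1}$, we get $\gamma([\log_t\mgap(z^0),\log_t\mgap(z^p)]) \ge 2^{r-1}$, and Theorem~\ref{th:tropical_lower_bound} applied to the polygonal curve yields $p \ge \gamma([\log_t\mgap(z^0),\log_t\mgap(z^p)]) \ge 2^{r-1}$.

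I expect the one delicate point to be the second paragraph: keeping the doubly-exponential bound on $t$ simultaneously effective and no larger than the stated one. Everything hinges on the quantitative input $\eta_0 \ge 1/2^{r-1}$, which is what makes Theorem~\ref{th:polyhedron_metric_estimate} usable with an explicit threshold, together with the identity $(10r-1)^5((10r-2)!)^8 = ((10r-1)!)^8/(10r-1)^3$ that compresses the various polynomial and factorial factors into the single power $((10r-1)!)^{24}$ appearing in~\eqref{eq:threshold_t}; the rest is bookkeeping.
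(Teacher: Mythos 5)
Your proposal is correct and follows essentially the same route as the paper: it instantiates Theorem~\ref{th:tropical_lower_bound} with $\gamma([0,2])\ge 2^{r-1}$, $\epsilon_0=1/(3\cdot 2^{r-1})$, and the metric estimate of Theorem~\ref{th:polyhedron_metric_estimate} with $\eta_0\ge 1/2^{r-1}$, and your factorial bookkeeping reproducing the threshold~\eqref{eq:threshold_t} matches the paper's computation. The only part the paper leaves implicit, and which you rightly spell out, is the final translation of the endpoint hypotheses via the monotonicity of $\gamma$ under interval inclusion.
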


Taking into account the discussion at the end of Section~\ref{sec:classical_central_path}, we may restate Theorem~\ref{th:exp_lower_bound} in terms of the complexity of interior point methods and prove Theorem~\ref{thm:complexity:intro}.
\begin{corollary}\label{cor:exp_lower_bound}
Let $0 < \theta < 1$, and suppose that $t$ satisfies~\eqref{eq:threshold_t}. Then, any log-barrier interior point method which describes a trajectory contained in the neighborhood $\Ncal^{-\infty}_{\theta,t}$ of the primal-dual central path of $\realCEXslack{r}{t}$, needs to perform at least $2^{r-1}$ iterations to reduce the duality measure from $t^2$ to $1$.
\end{corollary}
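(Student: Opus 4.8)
The plan is to obtain this statement directly from Theorem~\ref{th:exp_lower_bound}, combined with the structural observations on path-following methods made at the end of Section~\ref{sec:classical_central_path}. The key point is that an iteration of any such method only ever moves along a segment that stays inside a wide neighborhood, so the whole trajectory is a polygonal curve of the kind handled by Theorem~\ref{th:exp_lower_bound}, and the number of iterations equals the number of its segments.

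Concretely, I would first recall that, by Lemma~\ref{lemma:newton} and the case analysis of short-step, long-step and predictor-corrector schemes (including the layered-least-squares variants of~\cite{VavasisYe}) carried out there, any log-barrier interior point method of the type under consideration produces a finite sequence of iterates $z^0, z^1, \dots, z^p$ such that each segment $[z^i, z^{i+1}]$ lies entirely in the wide neighborhood $\Ncal^{-\infty}_{\theta,t}$ — the $\ell_2$-neighborhoods used by some of these methods being contained in $\Ncal^{-\infty}_{\theta,t}$. Hence the trajectory is a polygonal curve $[z^0, z^1] \cup \dots \cup [z^{p-1}, z^p]$ contained in $\Ncal^{-\infty}_{\theta,t}$, and the method performs exactly $p$ iterations.

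Next I would match the endpoints with the hypotheses of Theorem~\ref{th:exp_lower_bound}. Reducing the duality measure from $t^2$ to $1$ means $\bar\mu(z^0) \geq t^2$ and $\bar\mu(z^p) \leq 1$; reindexing by $\tilde z^i \coloneqq z^{p-i}$ reverses the orientation, producing a polygonal curve still contained in $\Ncal^{-\infty}_{\theta,t}$ with $\bar\mu(\tilde z^0) \leq 1$ and $\bar\mu(\tilde z^p) \geq t^2$, which is precisely the configuration required by Theorem~\ref{th:exp_lower_bound} (the quantity $\gamma$ being insensitive to the orientation of the tropical central path). Since $t$ satisfies~\eqref{eq:threshold_t}, that theorem applies and yields $p \geq 2^{r-1}$, i.e.\ at least $2^{r-1}$ iterations, as claimed.

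There is no genuinely hard step here: everything substantive — the tropical lower bound $\gamma\bigl([0,2]\bigr) \geq 2^{r-1}$, the value of $\epsilon_0$, and the explicit threshold on $t$ — has already been established in Theorem~\ref{th:exp_lower_bound}. The only point demanding a little care is the bookkeeping that certifies the method's trajectory lies in a \emph{single} fixed wide neighborhood $\Ncal^{-\infty}_{\theta,t}$, which is exactly what Lemma~\ref{lemma:newton} together with the discussion preceding this subsection guarantees for all the schemes classified in~\cite[Chapter~5]{Wright}.
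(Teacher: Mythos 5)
Your proposal is correct and follows essentially the same route as the paper, which derives the corollary by combining Theorem~\ref{th:exp_lower_bound} with the discussion at the end of Section~\ref{sec:classical_central_path} showing that the methods' trajectories are polygonal curves lying in a single wide neighborhood $\Ncal^{-\infty}_{\theta,t}$. Your explicit reindexing to reconcile the orientation of the trajectory with the hypotheses $\bar\mu(z^0)\leq 1$, $\bar\mu(z^p)\geq t^2$ of Theorem~\ref{th:exp_lower_bound} is a harmless bookkeeping detail the paper leaves implicit.
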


\begin{remark}
Corollary~\ref{cor:exp_lower_bound} requires the size $\theta$ of the neighborhood to be fixed independently of the parameter $t$. While this requirement can be relaxed slightly (the lower bound holds as soon as $\log(1-\theta) = o(\log t)$), we point out that it is met by the interior point methods discussed in Section~\ref{sec:classical_central_path}. For instance, for predictor-corrector methods, the radius of the outer neighborhood is usually set to $\theta = 1/2$. Although this setting can be refined, one can show that the proof of the convergence requires $\theta$ to be chosen less than~$4/5$ (see~\cite[Exercise~5.6]{Wright}).
\end{remark}

\begin{remark}
It would be interesting to test standard linear programming solvers on the family $\realCEXslack{r}{t}$. It is obvious that solvers computing with bounded precision numbers are unable to deal with coefficients as large as in our example, in the light of the condition~\eqref{eq:threshold_t}. This already rules out most of the standard solvers. It would be worthwhile to explore how, \eg{}, SDPA-GMP can be used to deal with such input~\cite{SDPA-GMP}. That solver relies on the floating point numbers with arbitrary precision mantissa provided by the GMP library.
\end{remark}

\section{Combinatorial Experiments}\label{sec:combinatorics}
\noindent
We now want to give some hints to the combinatorial properties of the feasible region of the Puiseux linear program $\puiseuxCEX{r}$, which we denote as $\bm \Rcal_r$.
These are based on experiments for the first few values of $r$, which have been performed with \polymake \cite{DMV:polymake}.
Notice that since version 3.0 \polymake offers linear programming and convex hull computations over the field of Puiseux fractions with rational coefficients \cite{JoswigLohoLorenzSchroeter:2016}, and the coefficients of $\puiseuxCEX{r}$ lie in this subfield.
Notice that this is entirely independent of the metric analysis which was necessary for our main results.
Throughout we assume that $r\geq 1$.

By construction $\bm \Rcal_r$ is a convex polyhedron in the non-negative orthant of $\K^{2r}$ which contains interior points, which means that it is full-dimensional.
Moreover, it is easy to check that the exterior normal vectors of the defining inequalities positively span the entire space, hence $\bm \Rcal_r$ is bounded, i.e., a polytope over Puiseux series.
None of these $3r+1$ inequalities is redundant, i.e., each inequality defines a facet.
For instance, $\Rcal_1$ is a quadrangle.
However, the polytope $\bm \Rcal_r$ is not \emph{simple} for $r\geq 2$, i.e., there are vertices which are contained in more than $2r$ facets.
All these non-simple vertices lie in the optimal face, which is given by $x_1=0$.
By modifying the inequality $x_{2r}\geq 0$ to $x_{2r}\geq \epsilon$ for sufficiently small $\epsilon>0$ we obtain a simple polytope $\bm \Rcal_r^\epsilon$ as the feasible region of the \emph{perturbed linear program}
\begin{equation}\label{eq:counter:epsilon}
\begin{array}{r@{\quad}l}
\text{minimize} & x_1 \\[\jot]
\text{subject to} & 
x_1 \leq t^2 \\[\jot]
& x_2 \leq t \\[\jot]
& x_{2j+1} \leq t \, x_{2j-1} \, , \; x_{2j+1} \leq t \, x_{2j} \tikzmark{} \\[\jot]
& x_{2j+2} \leq t^{1-1/2^j} (x_{2j-1} + x_{2j}) \tikzmark{} \\[\jot]
& x_{2r-1} \geq 0 \, , \; x_{2r} \geq \epsilon
\end{array}\tag*{$\perturbedPuiseuxCEX{r}{\epsilon}$}
\insertbigbrace{$1 \leq j < r$}
\end{equation}
For the remainder of this section we refer to our original construction $\puiseuxCEX{r}$ and its feasible region as the \emph{unperturbed case}.
The unperturbed Puiseux polytope $\puiseuxCEX{r}$ can be seen as the limit of the perturbed Puisuex polytopes $\perturbedPuiseuxCEX{r}{\epsilon}$ when $\epsilon$ goes to zero.
See Figure~\ref{fig:schlegel} for a visualization of~$\bm \Rcal_2^\epsilon$.

The two facets $x_1=t^2$ and $x_3=tx_2$ play a special role. 
It can be verified that they do not share any vertices, neither in the perturbed nor in the unperturbed case.
In the unperturbed $r=2$ case these two facets cover all the vertices except for one, while four and $16$ are uncovered for $r=3$ and $r=4$, respectively.

Since the perturbation is only very slight it follows that the dual graphs of the perturbed and the unperturbed Puiseux polytope are the same.
For $2\leq r\leq 6$ we found that this is a complete graph minus one edge, which corresponds to the special pair of disjoint facets mentioned above.
This should be compared with the following constructions.
Let $D$ be any dual to $2$-neighborly polytope; any two facets of $D$ share a common \emph{ridge}, i.e., a face of codimension $2$.
Now pick any ridge and truncate it to produce a new polytope $D'$.
The dual graph of $D'$ is a complete graph minus one edge.
Note that the polytope $D'$ may have very many vertices as it is still very close to a polytope which is $2$-neighborly.

In our experiments, for all $r\leq 6$, the primal graph of $\bm \Rcal_r^\epsilon$ has diameter $r+1=(3r+1)-2r$, which is precisely the Hirsch bound.
This is in stark contrast with the unperturbed case in which the diameter equals $3$, for $2\leq r\leq 6$.

It is an interesting question which values for $\epsilon$ are small enough.
Our experiments suggest that $\epsilon=t^{-1}$ works for all $r \geq 2$.
Employing generalized Puiseux series with valuations of higher rank offers an alternative approach, which will always work:
we may introduce a second large infinitesimal $s\gg t$ and set $\epsilon=s^{-1}$.

\begin{figure}[ht]
  \centering

\begin{tikzpicture}[x  = {(0.998147445743233cm,-0.00742310781621397cm)},
                    y  = {(-0.0257708007775884cm,-0.950677647502943cm)},
                    z  = {(-0.0551139037225437cm,0.310091773526182cm)},
                    scale = 1.5,
                    color = {lightgray}]

  \footnotesize

  \definecolor{pointcolor}{rgb}{ 1,0,0 }
  \tikzstyle{pointstyle} = [fill=pointcolor]

  \coordinate (v0) at (0, 2, 2.82843);
  \coordinate (v1) at (1.79191, 1.1566, 3.57968);
  \coordinate (v2) at (2.75744, 1.1566, 4.94514);
  \coordinate (v3) at (0, 2, 0.5);
  \coordinate (v4) at (0, 0.353553, 0.5);
  \coordinate (v5) at (4, 0, 5.65685);
  \coordinate (v6) at (4, 2, 8.48528);
  \coordinate (v7) at (1.79191, 1.1566, 1.09014);
  \coordinate (v8) at (2.75744, 1.1566, 1.09014);
  \coordinate (v9) at (4, 0, 0.5);
  \coordinate (v10) at (4, 2, 0.5);
  \coordinate (v11) at (0.299698, 0.193441, 0.598702);
  \coordinate (v12) at (0.353553, 0, 0.5);

  \definecolor{linecolor_interior}{rgb}{ 0.4667,0.9255,0.6196 }
  \definecolor{linecolor_exterior}{rgb}{ 0,0,0 }
  \tikzstyle{linestyle_interior} = [color=blue!50, thick]
  \tikzstyle{linestyle_interior_front} = [preaction={draw=white, line width=3pt}, color=blue!50, thick]
  \tikzstyle{linestyle_exterior} = [color=linecolor_exterior, very thick]
  \tikzstyle{linestyle_exterior_front} = [preaction={draw=white, line width=5pt}, color=linecolor_exterior, very thick]

  \draw[linestyle_exterior] (v6) -- (v0);
  \draw[linestyle_interior] (v5) -- (v2);
  \draw[linestyle_interior] (v1) -- (v0);
  \draw[linestyle_interior] (v2) -- (v1);

  \node at (v0) [text=black, inner sep=0.5pt, above left, draw=none, align=left] {$(0, t, 0, t^{3/2})$};

  \draw[linestyle_exterior] (v12) -- (v5);
  \draw[linestyle_interior] (v6) -- (v2);
  \draw[linestyle_interior_front] (v7) -- (v1);
  \draw[linestyle_interior_front] (v7) -- (v3);
  \draw[linestyle_interior_front] (v8) -- (v2);

  \node at (v2) [rotate=-45, text=black, inner sep=3pt, left, draw=none, align=left, fill=white] {$(t^2, t, t^2, t^{5/2} + t^{3/2})$};

  \draw[linestyle_interior] (v8) -- (v7);
  \draw[linestyle_exterior] (v9) -- (v5);

  \node at (v3) [text=black, inner sep=0.5pt, below left, draw=none, align=left] {$(0, t, 0, t^{-1})$};

  \node at (v6) [text=black, inner sep=1.5pt, right, draw=none, align=left, fill=white] {$(t^2, t, 0, t^{5/2} + t^{3/2})$};

  \draw[linestyle_exterior] (v10) -- (v6);
  \draw[linestyle_exterior] (v6) -- (v5);
  \draw[linestyle_interior] (v10) -- (v8);

  \node at (v8) [text=black, inner sep=2pt, below, draw=none, align=left, fill=white]{$(t^2, t, t^2, t^{-1})$};

  \draw[linestyle_exterior] (v10) -- (v9);

  \node at (v10) [text=black, inner sep=0.5pt, below right, draw=none, align=left] {$(t^2, t, 0, t^{-1})$};

  \draw[linestyle_interior] (v11) -- (v1);

  \draw[linestyle_interior] (v11) -- (v4);
  \draw[linestyle_interior_front] (v11) -- (v7);

  \node at (v7) [text=black, inner sep=2pt, below left, draw=none, align=left, fill=white]{$(t, t, t^2, t^{-1})$};

  \node at (v4) [text=black, inner sep=1.5pt, left, draw=none, align=left] {$(0, t^{-3/2}, 0, t^{-1})$};

  \node at (v5) [text=black, inner sep=0.5pt, above right, draw=none, align=left] {$(t^2, 0, 0, t^{5/2})$};

  \node at (v9) [text=black, inner sep=1.5pt, right, draw=none, align=left] {$(t^2, 0, 0, t^{-1})$};

  \draw[linestyle_interior] (v12) -- (v11);

  \node at (v12) [rotate=-45, text=black, inner sep=2pt, above left, draw=none, align=right] {$(t^{-3/2}, 0, 0, t^{-1})$};
  \node at (v11) [rotate=-45, text=black, inner sep=3pt, left, draw=none, align=left] {$(\frac{1}{2}t^{-3/2}, \frac{1}{2}t^{-3/2}, \frac{1}{2}t^{-1/2}, t^{-1})$};

  \draw[linestyle_interior_front] (v9) -- (v8);
  \draw[linestyle_exterior_front] (v4) -- (v3);
  \draw[linestyle_exterior] (v4) -- (v0);
  \draw[linestyle_exterior] (v12) -- (v4);
  \draw[linestyle_exterior] (v10) -- (v3);
  \draw[linestyle_exterior] (v3) -- (v0);
  \draw[linestyle_exterior_front] (v12) -- (v9);

  \node at (v1) [rotate=-45, text=black, inner sep=0.5pt, above left, draw=none, align=left, fill=white]{$(t, t, t^2, 2t^{3/2})$};

  \foreach \i in {0,1,...,12}{
    \fill[pointcolor] (v\i) circle (1 pt);
  }

\end{tikzpicture}

  \caption{Schlegel diagram of perturbed polytope $\bm\Rcal_2^\epsilon$ (for $\epsilon=t^{-1}$) projected onto the facet $x_3=0$}
  \label{fig:schlegel}
\end{figure}
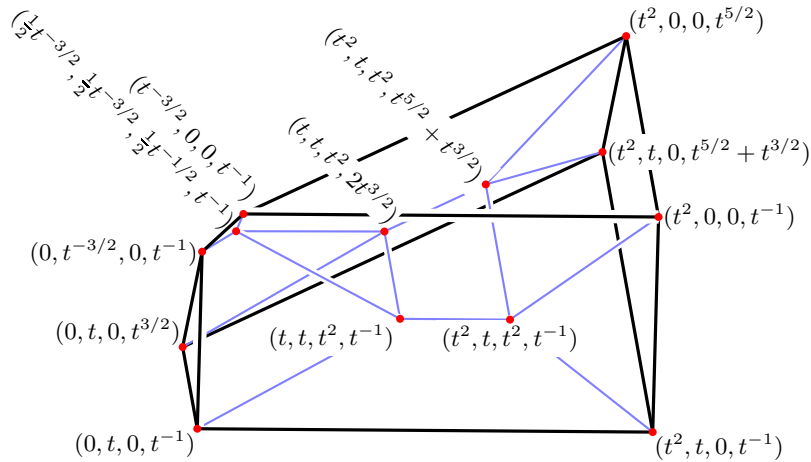

\section{Concluding Remarks}
\noindent
In the present work, we obtained a family of counter-examples showing that standard
polynomial-time interior points method exhibit a non strongly polynomial time
behavior. To do so, we considered nonarchimedean instances
with a degenerate tropical limit that we characterized
by combinatorial means. This strategy
is likely to be applicable to other problems
in computational complexity: tropicalization
generally permits to test the sensitivity of classical algorithms
to the bitlength of the input. 

Moreover, the present approach may also extend
to other interior point methods (e.g.~infeasible ones) or other barrier or penalty functions.
Indeed, as should be clear from ~\cite{alessandrini2013,longandwinding} 
what ``really'' matters is to work with a Hardy field
of functions definable in a o-minimal structure.
This allows for other fields than the
absolutely convergent generalized real Puiseux series
considered here.

The weak tropical angle $\angle^* UVW$ used in Proposition~\ref{prop:curvature} yields a bound on the total curvature of non-decreasing paths.
A similar approach allows one, more generally, to define a notion of tropical curvature for arbitrary paths.
This should also be compared with the notion of curvature for tropical hypersurfaces introduced in~\cite{bertrand}.
We leave this for future work.

\bibliographystyle{alpha}

\end{document}